\newcommand{\mylabel}[2]{#2\def\@currentlabel{#2}\label{#1}}
\let\originalleft\left
\let\originalright\right
\renewcommand{\left}{\mathopen{}\mathclose\bgroup\originalleft}
\renewcommand{\right}{\aftergroup\egroup\originalright}
\newcommand{\ensemblenombre}[1]{\mathbb{#1}}
\newcommand{\N}{\ensemblenombre{N}}
\newcommand{\Z}{\ensemblenombre{Z}}
\newcommand{\R}{\ensemblenombre{R}}
\renewcommand{\P}{\mathbb{P}}
\newcommand{\E}{\mathbb{E}}
\newcommand{\Ec}[1]{\mathbb{E} \left[#1\right]}
\newcommand{\Pp}[1]{\mathbb{P} \left(#1\right)}
\newcommand{\Ecsq}[2]{\mathbb{E} \left[#1\mathrel{}\middle|\mathrel{}#2\right]}
\newcommand{\Ppsq}[2]{\mathbb{P} \left(#1\mathrel{}\middle|\mathrel{}#2\right)}
\newcommand{\intervalle}[4]{\mathopen{#1}#2
	\mathclose{}\mathpunct{},#3
	\mathclose{#4}}
\newcommand{\intervalleff}[2]{\intervalle{[}{#1}{#2}{]}}
\newcommand{\intervallefo}[2]{\intervalle{[}{#1}{#2}{)}}
\newcommand{\intervalleoo}[2]{\intervalle{(}{#1}{#2}{)}}
\newcommand{\intervalleentier}[2]{\intervalle\llbracket{#1}{#2}
	\rrbracket}
\newcommand{\petito}[1]{o\mathopen{}\left(#1\right)}
\newcommand{\grandO}[1]{O\mathopen{}\left(#1\right)}
\newcommand{\enstq}[2]{\left\lbrace#1\mathrel{}\middle|\mathrel{}#2\right\rbrace}
\newcommand{\ind}[1]{\mathbb{1}_{\lbrace #1 \rbrace}}  
\newcommand{\1}{\mathbb{1}}  
\newcommand{\cT}{\mathcal{T}}
\newcommand{\ttt}{\mathtt{t}}
\newcommand{\ttu}{\mathtt{u}}
\newcommand{\ttv}{\mathtt{v}}
\newcommand{\ttD}{\mathtt{D}}
\newcommand{\ttT}{\mathtt{T}}
\newcommand{\ttP}{\mathtt{P}}
\DeclareMathOperator{\haut}{ht}
\DeclareMathOperator{\diam}{diam}
\DeclareMathOperator{\wrt}{WRT}
\DeclareMathOperator{\pa}{PAT}
\DeclareMathOperator{\cst}{cst}
\DeclareMathOperator{\Var}{Var}
\newcommand{\diff}{\mathop{}\mathopen{}\mathrm{d}}
\newcommand{\abs}[1]{\left\lvert #1 \right\rvert}
\newmdtheoremenv{theorem}{Theorem}[section]
\newtheorem{proposition}[theorem]{Proposition}
\newtheorem{lemma}[theorem]{Lemma}
\newtheorem{corollary}[theorem]{Corollary}
\newtheorem{remark}[theorem]{Remark}
\begin{document}
\title{Correction terms for the height of weighted recursive trees}
\author{Michel Pain\footnote{Courant Institute of Mathematical Sciences, New York University.} \and Delphin Sénizergues\footnote{Department of Mathematics, University of British Columbia.}}
\maketitle

\begin{abstract}
	Weighted recursive trees are built by adding successively vertices with predetermined weights to a tree: each new vertex is attached to a parent chosen randomly proportionally to its weight.
	Under some assumptions on the sequence of weights, the first order for the height of such trees has been recently established in \cite{senizergues2019} by one of the authors. 
	In this paper, we obtain the second and third orders in the asymptotic expansion of the height of weighted recursive trees, under similar assumptions.
	Our methods are inspired from those used to prove similar results for branching random walks. 
	Our results also apply to a related model of growing trees, called the preferential attachment tree with additive fitnesses.
\end{abstract}

\section{Introduction}

Models of growing random trees have been widely studied for their connections with algorithms~\cite{devroye1998} and networks~\cite{ergun2002}; they have been used to model, among others, epidemic spread~\cite{moon1974} and pyramid schemes~\cite{gastwirthbhattacharya1984}.
See the survey \cite{smythemahmoud1994} and the book \cite{drmota2009} for a review of the literature.
In this paper, we consider a large family of such models that generalizes some well-studied cases, such as the uniform recursive tree or the plane oriented recursive tree, whose study dates back at least to~\cite{narapoport1970} and~\cite{szymanski1987} respectively.
For these simpler models, the first order for the height has been proved by Pittel~\cite{pittel1994} and the second and third orders in the asymptotic expansion can be deduced from similar results for the maximum of branching random walks.
The models of trees that we consider here can be seen as inhomogeneous versions of these simpler ones.
The first order for their height has been obtained recently in~\cite{senizergues2021} by one of the authors, and we prove here that the second and third orders are still similar to those appearing in the maximum of branching random walks, even though no direct connection can be used in this case.
We first present our model and results, and then discuss in more details some related works from the literature,
as well as the link between our model and branching random walks.

\subsection{Definition of the model and assumptions}

\paragraph{Definition of WRTs}
We define the model of weighted recursive trees, first introduced in \cite{borovkov2006} by Borovkov and Vatutin. 
For any sequence of non-negative real numbers $(w_n)_{n\geq 1}$ with $w_1>0$, we define the distribution $\wrt((w_n)_{n\geq 1})$ on sequences of growing rooted trees, which is called the \emph{weighted recursive tree with weights $(w_n)_{n\geq 1}$}. 
We construct a sequence of rooted trees $(\ttT_n)_{n\geq 1}$ starting from $\ttT_1$ containing only one root-vertex $\ttu_1$ and let it evolve in the following manner: the tree $\ttT_{n+1}$ is obtained from $\ttT_n$ by adding a vertex $\ttu_{n+1}$ with label $n+1$. The father of this new vertex is chosen to be the vertex with label $K_{n+1}$, where
\begin{align*}
\forall k\in \{1,\dots,n\}, \qquad \Ppsq{K_{n+1}=k}{\ttT_n}\propto w_k.
\end{align*}

Whenever we have any sequence of real numbers $(x_n)_{n\geq 1}$, we write $\boldsymbol x=(x_n)_{n\geq 1}$ in a bold font as a shorthand for the sequence itself, and $(X_n)_{n\geq 1}$ with a capital letter to denote the sequence of partial sums defined for all $n\geq 1$ as $X_n\coloneqq \sum_{i=1}^nx_i$. In particular, we do so for sequences of weights $(w_n)_{n\geq 1}$, so that $W_n$ always denotes the sum of the $n$ first weights. 
Some of our assumptions are expressed using the Landau big-O notation: we write $x_n=\grandO{y_n}$ if there exists a constant $C$ such that $\abs{x_n}\leq C \abs{y_n}$ for all $n\geq 1$.

\paragraph{Assumptions}

We assume that we work with a sequence $\boldsymbol w$ which satisfies the following assumption for some $\gamma > 0$, 
\begin{align} \tag{$\mathcal H_{1,\gamma}$} \label{eq:assumption_1}
\exists \lambda >0,\ \exists\alpha\in\intervalleoo{0}{1} : 
W_n = \lambda \cdot n^\gamma + O \left( n^{\gamma-\alpha} \right),
\end{align}
as $n \to \infty$.
Moreover, we assume in parts of the paper that
\begin{align} \tag{$\mathcal H_2$} \label{eq:assumption_2}
\sum_{i=n}^\infty \left( \frac{w_i}{W_i} \right)^2
= O \left( \frac{1}{n} \right).
\end{align}
Associated to the constant $\gamma$, we define another constant $\theta > 0$ as the unique positive solution to the following equation
\begin{align}\label{eq:definition_theta}
1+\gamma\left(e^\theta-1-\theta e^\theta \right) = 0.
\end{align}
Under assumption \eqref{eq:assumption_1} on the sequence of weights $(w_n)_{n\geq 1}$, it was shown in \cite{senizergues2021} that the height of the tree satisfies 
\begin{align}\label{eq:asymptotic for the height first order}
	\frac{\haut(\ttT_n)}{\log n}\underset{n \rightarrow\infty}{\longrightarrow} \gamma e^{\theta}
\end{align}
almost surely. 

\subsection{Main results}
Our results consist in computing the next order terms for the asymptotic behaviour \eqref{eq:asymptotic for the height first order}, which contains a logarithmic correction followed by a term of constant order. This is contained is the following theorem.
\begin{theorem} \label{thm:height_WRT}
Under assumptions \eqref{eq:assumption_1} and \eqref{eq:assumption_2}, the following sequence of random variables is tight
\[
	\left( \haut(\ttT_n) - \gamma e^{\theta} \log n + \frac{3}{2 \theta} \log \log n \right)_{n \geq 2}.
\]
\end{theorem}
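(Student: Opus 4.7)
The plan is to follow the two-step strategy developed for the maximum of branching random walks (BRW), with the role of "time" played by $\log n$. We will prove an upper bound $\haut(\ttT_n) \leq \gamma e^\theta \log n - \frac{3}{2\theta} \log\log n + C$ and a matching lower bound $\haut(\ttT_n) \geq \gamma e^\theta \log n - \frac{3}{2\theta} \log\log n - C$, each with probability at least $1 - \varepsilon(C)$ uniformly in $n$, where $\varepsilon(C) \to 0$ as $C \to \infty$. The key tool in both halves is a many-to-one formula for the expected number of vertices of $\ttT_n$ at a given generation with prescribed ancestral labels: using assumption \eqref{eq:assumption_1} and taking logarithms of the ancestral labels, the problem reduces to the study of an associated random walk whose critical exponential tilt parameter is precisely $\theta$ from \eqref{eq:definition_theta} and whose tilted drift recovers the first-order speed $\gamma e^\theta$ in \eqref{eq:asymptotic for the height first order}.

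For the upper bound, set $\ell_n = \gamma e^\theta \log n - \frac{3}{2\theta} \log\log n + C$ and compute $\E\bigl[\#\{v \in \ttT_n : |v| \geq \lceil \ell_n \rceil\}\bigr]$ via the many-to-one formula with exponential tilt by $\theta$. After tilting, the probability that a candidate trajectory ends at a label $\leq n$ after $\lceil \ell_n \rceil$ steps is a classical local-limit-theorem / ballot-type quantity of order $(\log n)^{-3/2}$; multiplying by the Radon--Nikodym density $\asymp n \cdot e^{-\theta \ell_n}$ produces a bound of order $e^{-\theta C}$, which feeds directly into Markov's inequality.

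For the lower bound, we must exhibit with probability bounded away from zero a vertex at height $\ell'_n = \gamma e^\theta \log n - \frac{3}{2\theta} \log\log n - C$. Following the Bramson and Aïdékon--Shi approach in the BRW literature, we consider a truncated count restricted to vertices whose associated random walk trajectory, read off the ancestral labels, stays below a well-chosen barrier that is depressed by exactly the $\frac{3}{2\theta}\log\log n$ correction. Ballot-type / entropic-repulsion estimates will give that this restricted count has first moment of constant order; the conclusion follows from the Paley--Zygmund inequality once the second moment is shown to be of the same order.

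The principal obstacle will be the second moment estimate. The random walk attached to the WRT is inhomogeneous: its step distribution depends on the current label, and two lineages share a common prefix before splitting, after which they evolve as two conditionally independent walks but starting from a possibly large label. Decomposing over the label of the most recent common ancestor and obtaining ballot estimates with enough uniformity in the starting point is the technical heart of the argument. Assumption \eqref{eq:assumption_2} will play a crucial role here, as it provides uniform control on the variance of the walk increments and in particular rules out configurations where a few late vertices carry a macroscopic fraction of the weight, which would otherwise create rare but non-negligible joint contributions blowing up the second moment.
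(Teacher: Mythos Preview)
Your overall strategy is right and matches the paper, but two steps are genuinely incomplete.

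\textbf{Upper bound.} A plain first-moment bound on $\#\{v\in\ttT_n:|v|\geq\lceil\ell_n\rceil\}$ does \emph{not} produce a $(\log n)^{-3/2}$ factor. Under the $\theta$-tilt the associated walk is (approximately) centered, so the probability that the terminal label lands in $[1,n]$ after $\lceil\ell_n\rceil$ steps is of constant order; a local limit theorem alone gives at best $(\log n)^{-1/2}$. The $(\log n)^{-3/2}$ comes from a ballot estimate, which requires restricting to trajectories that stay below a barrier throughout. The paper earns this restriction in a separate preliminary step (Lemma~\ref{lem:first_barrier}): a first-moment argument shows that with probability $1-O(Ke^{-\theta K})$ one has $\haut(\ttu_m)\leq\tau(m)+K$ simultaneously for all $m$, and only then is the barrier inserted into the many-to-one. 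You write ``ballot-type'' but never justify the barrier restriction, and without it the computation you describe gives the wrong order.

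\textbf{Lower bound.} This is the more serious gap. Paley--Zygmund on a barrier-restricted count $Q_n$ yields $\P(Q_n>0)\geq c_0$ for some fixed $c_0>0$, but tightness requires $\P\bigl(\haut(\ttT_n)\geq\ell'_n\bigr)\to 1$ as $C\to\infty$, not merely $\geq c_0$. For branching random walks this upgrade is free via the branching property (wait for $N$ particles, use independence of their subtrees). Weighted recursive trees have no branching property: the subtrees rooted at the first $N$ vertices are neither independent nor guaranteed to satisfy \eqref{eq:assumption_1}--\eqref{eq:assumption_2}. The paper's substitute is a modified tree $\ttT_n^{(N)}$ obtained by transferring the weights of vertices $2,\dots,N$ to the root. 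For this tree the many-to-two computation shows that two vertices contributing to $Q_n^{(N)}$ have most recent common ancestor equal to the root with probability $\to1$ as $N\to\infty$; this forces $\E[(Q_n^{(N)})^2]\leq(1+\varepsilon)^2\E[Q_n^{(N)}]^2$ for $K,N$ large, so Cauchy--Schwarz gives $\P(Q_n^{(N)}>0)\to1$. Since $\haut(\ttT_n)\geq\haut(\ttT_n^{(N)})$, this suffices, with the parameter $C$ absorbing $\gamma e^\theta\log N$. Your sketch stops at ``second moment of the same order'', which only delivers a fixed constant and leaves the tightness unproved.
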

In the case of the upper bound for the height, we have a more precise result, requiring only assumption \eqref{eq:assumption_1}, which gives an explicit bound for the tail distribution of the height.
This bound should be optimal up to the value of the constant $C=C(\boldsymbol w)$.
\begin{theorem} \label{thm:upper_bound_height_WRT}
Under assumption \eqref{eq:assumption_1}, there exists $C > 0$ such that for any $n,x \geq 1$,
\[
	\Pp{\haut(\ttT_n) \geq \gamma e^{\theta} \log n - \frac{3}{2 \theta} \log \log n + x}
	\leq C x e^{-\theta x}.
\]
\end{theorem}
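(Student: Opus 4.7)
The plan is a first-moment method refined by a Bramson-style barrier estimate, the WRT analogue of the standard proof of the corresponding right-tail bound for the maximum of a branching random walk. First, by a union bound
\[
\Pp{\haut(\ttT_n) \geq h} \leq \sum_{k=1}^n \Pp{|\ttu_k| \geq h},
\]
where $|\ttu_k|$ denotes the depth of $\ttu_k$ in $\ttT_n$. Reading the ancestor line of $\ttu_k$ backwards defines a Markov chain $M_0 = k, M_1, M_2, \dots$ with $\Pp{M_{i+1} = j \mid M_i = m} = w_j/W_{m-1}$ for $1 \leq j < m$, for which $|\ttu_k|$ is the hitting time of~$1$. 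Under \eqref{eq:assumption_1} this transition is close to $\gamma j^{\gamma - 1}/m^\gamma$, so $L_i := \log M_i$ is approximately a random walk starting at $\log k$ with i.i.d.\ downward increments of exponential law with rate $\gamma$, and $\Pp{|\ttu_k| \geq h}$ is then comparable to the probability that this random walk takes at least $h$ steps to reach $0$.

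A direct Chernoff bound gives $\Pp{|\ttu_k| \geq h} \leq e^{-\theta h} \E[e^{\theta|\ttu_k|}]$, and the elementary recursion
\[
\E[e^{\theta|\ttu_k|}] = \frac{e^\theta}{W_{k-1}} \sum_{j<k} w_j \E[e^{\theta|\ttu_j|}],
\]
combined with \eqref{eq:assumption_1}, produces $\E[e^{\theta|\ttu_k|}] \leq C k^{\gamma(e^\theta - 1)}$. Summing over $k \leq n$ and using the identity $\gamma(e^\theta - 1) + 1 = \gamma \theta e^\theta$ (immediate from \eqref{eq:definition_theta}) makes the leading exponents in $n$ cancel exactly, giving only the crude bound $\Pp{\haut(\ttT_n) \geq \gamma e^\theta \log n + x} \leq C e^{-\theta x}$, which misses both the $\log\log n$ shift and the linear factor $x$.

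To recover these, I would follow Bramson's barrier argument: restrict the first moment to those ancestor chains whose trajectory $(L_i)_{0 \leq i \leq h}$ stays above a well-chosen linear barrier interpolating $\log k$ at time $0$ and a target of order $x$ at time $h$, with a safety margin. After the critical exponential tilt by parameter $\theta$, under which the tilted walk has the right mean to land near the target, this barrier event becomes a one-sided ballot event for a centred random walk of length $h \asymp \log n$, whose probability is of order $(x+1)/h^{3/2}$ by standard ballot/Brownian-bridge estimates. Multiplied by the tilting factor $e^{-\theta h}$ and summed over $k$ (the sum being dominated by $k$ of order $n$), this gives a bound of order $C(x+1) e^{-\theta x}/(\log n)^{3/2}$, matching the statement of the theorem once the $(\log n)^{-3/2}$ is absorbed into the shift $-\frac{3}{2\theta}\log\log n$. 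Paths that cross the barrier at some first time $i^\ast$ are handled separately by applying the crude Chernoff bound to the remaining $h - i^\ast$ steps from the crossing level, which a suitably convex choice of barrier makes geometrically summable in the overshoot.

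The main technical obstacle is that $(L_i)$ is not a genuine i.i.d.\ random walk: its step distribution depends on $M_i$, and the exponential approximation carries only the polynomial error $O(n^{-\alpha})$ from \eqref{eq:assumption_1}. The Chernoff inequality, change of measure, and ballot estimate therefore have to be performed directly on the Markov chain $(M_i)$, with uniform error control as $k$ and $h$ grow at their respective scales. This bookkeeping, more than any single conceptual step, is where I expect the hardest part of the proof to lie.
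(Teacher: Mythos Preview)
Your approach is genuinely different from the paper's, and in principle workable. You parametrise the ancestral line by generation and study the backward Markov chain $M_i$ of labels (equivalently the approximately-exponential walk $L_i=\log M_i$), whereas the paper parametrises by \emph{time}: via the many-to-one Lemma~\ref{lem:many_to_one} the trajectory $(\haut(\ttu_n(i_k)))_k$ becomes, after the $\theta$-tilt, a sum of independent Bernoulli$(p_i)$ variables, and a deterministic time change $S_k=H_{i_k}-k$ turns it into a nearly-centred lattice walk. The paper then proceeds in two explicit steps: Lemma~\ref{lem:first_barrier} bounds the probability that \emph{any} vertex ever exceeds the crude barrier $\tau(m)+K$ by $C(K+1)e^{-\theta K}$ (this is where the linear factor $x$ in the theorem ultimately comes from, taking $K=x$), and Lemma~\ref{lem:upper_bound_WRT}, working on that event, slices by the height $a$ and by $L=\max_{k\in\intervalleentier{t/2}{t}}S_k(\ttu)$ to get a bound $C(K+1)e^{-\theta L/4}e^{-\theta a}$ summable in $a\geq K$ and $L\geq 0$. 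What the forward/Bernoulli route buys is that the tilted walk is explicit and the ballot estimates of Section~\ref{section:RW} apply directly; what your backward route would buy is avoiding the distinguished-vertex construction, at the cost of a more delicate tilted step law (the $\theta$-tilt on the hitting time of $M$ produces a Doob $h$-transform with $h(j)=\E[e^{\theta|\ttu_j|}]$, not a simple exponential family shift).

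There is one concrete slip in your sketch: the barrier you describe, ``interpolating $\log k$ at time $0$ and a target of order $x$ at time $h$'', cannot be right as stated since $L_h=0$ always, so the trajectory ends \emph{below} any positive target. What you presumably want is a barrier at distance $O(1)$ below the mean trajectory of the tilted walk, with the $(x+1)$ in the ballot estimate coming from the starting-point side (as in the paper's Lemma~\ref{lem:first_barrier}, where the walk starts at distance $K$ from the barrier). More substantively, your handling of barrier-crossing paths by ``Chernoff from the crossing point, geometrically summable in the overshoot'' is the place where the argument is most likely to go wrong: the paper does \emph{not} treat bad paths this way, but instead excludes them globally via Lemma~\ref{lem:first_barrier} before the refined moment, and this global step is itself a ballot-type estimate (not a Chernoff bound) because one must sum over the first vertex $\ttu_n$ to cross, and the constraint that its own trajectory stayed below up to time $\tau(n)-1$ is what produces the $(K+1)/\tau(n)^{3/2}$ needed for the sum over $n$ to converge.
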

The next theorem ensures that 
the set of vertices in $\ttT_n$ having height close to $\haut(\ttT_n)$
are not all close parents, meaning that some of them have a most recent common ancestor that is of height of constant order. 
This has the effect that the diameter of the tree is close to twice its height (which is an obvious upper-bound for the diameter). 
This is stated in the following theorem.
\begin{theorem} \label{thm:diameter_WRT}
Under assumptions \eqref{eq:assumption_1} and \eqref{eq:assumption_2}, the following sequence of random variables is tight
\[
	\left( \diam(\ttT_n) - 2\gamma e^{\theta} \log n + \frac{3}{\theta} \log \log n \right)_{n \geq 2}.
\]
\end{theorem}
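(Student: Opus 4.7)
The upper bound for tightness is immediate from Theorem~\ref{thm:height_WRT}. Since the path between any two vertices of $\ttT_n$ passes through their most recent common ancestor, we have $\diam(\ttT_n) \leq 2\haut(\ttT_n)$, hence
\[
\diam(\ttT_n) - 2\gamma e^{\theta}\log n + \tfrac{3}{\theta}\log\log n \leq 2 \left( \haut(\ttT_n) - \gamma e^{\theta}\log n + \tfrac{3}{2\theta}\log\log n \right),
\]
and the right-hand side is tight by Theorem~\ref{thm:height_WRT}.

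For the lower bound, the plan is to exhibit, with probability bounded away from zero uniformly in $n$, two vertices of $\ttT_n$ whose most recent common ancestor lies at bounded depth and which each reach almost the maximal height. I would fix a large deterministic integer $N$ and condition on $\ttT_N$. On a positive-probability event $E_N$ (depending only on the first $N$ attachment choices), two distinct vertices $\ttu_i, \ttu_j$ with $i,j \leq N$ lie at bounded depth in $\ttT_N$, belong to disjoint subtrees, and each of their subtrees already carries a weight of order $1$. After time $N$, the subtree rooted at $\ttu_i$ evolves as a WRT whose weight sequence is the random subsequence $(w_k)_{k \in S_i}$ of $\boldsymbol w$, reindexed by the order of arrival in the subtree.

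A key intermediate step is to verify that each such subtree almost surely satisfies the analogues of \eqref{eq:assumption_1} and \eqref{eq:assumption_2}. This relies on a P\'olya urn style martingale argument: the proportion $W_n^{(i)}/W_n$ is a nonnegative martingale, and assumption \eqref{eq:assumption_2} controls its quadratic variation sufficiently to yield $W_n^{(i)} = \lambda Z_i n^{\gamma} + O(n^{\gamma - \alpha'})$ almost surely for some random $Z_i > 0$ and some $\alpha' > 0$. From there one checks that the restricted weight sequences satisfy \eqref{eq:assumption_1} and \eqref{eq:assumption_2} with the same exponent $\gamma$ and random but positive constants. Theorem~\ref{thm:height_WRT} then applies inside each subtree, yielding for every $\varepsilon > 0$ a constant $B = B(\varepsilon)$ such that with probability at least $1 - \varepsilon$ each subtree contains a vertex at distance at least $\gamma e^{\theta}\log n - \tfrac{3}{2\theta}\log\log n - B$ from its own root. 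Summing along the path through the common ancestor gives $\diam(\ttT_n) \geq 2\gamma e^{\theta}\log n - \tfrac{3}{\theta}\log\log n - 2B - O(1)$, which is the desired lower tightness.

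The main obstacle is making this subtree reduction quantitative: one needs control of the error terms in \eqref{eq:assumption_1} for the random restricted weight sequences, uniformly on an event of positive probability, so that the constant $B$ above is independent of $n$. Once this concentration is established, Theorem~\ref{thm:height_WRT} transfers cleanly to each subtree and the argument closes. Note that only the tightness of the height (Theorem~\ref{thm:height_WRT}), not the explicit tail bound of Theorem~\ref{thm:upper_bound_height_WRT}, is needed here, since what one requires inside the subtrees is a lower-tail estimate for the height.
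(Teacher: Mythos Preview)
Your upper bound matches the paper's exactly.

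Your lower bound strategy is genuinely different from the paper's and runs into a difficulty that the authors explicitly flag and circumvent. In the overview (Section~\ref{subsection:organization}) they write that ``the subtrees rooted at the $N$ first vertices are not independent and do not necessarily satisfy our assumptions (some of them can even be finite).'' Your proposal requires exactly this: that the random restricted weight sequence of each subtree a.s.\ satisfies \eqref{eq:assumption_1} and \eqref{eq:assumption_2} with a polynomial error term, uniformly enough to transfer Theorem~\ref{thm:height_WRT}. You correctly identify this as ``the main obstacle,'' but it is not a minor technicality: establishing $W_n^{(i)}/W_n = Z_i + O(n^{-\alpha'})$ a.s.\ for some $\alpha'>0$ and $Z_i>0$, and then showing the reindexed sequence inherits \eqref{eq:assumption_1}--\eqref{eq:assumption_2}, amounts to a quantitative generalized P\'olya-urn result that is nowhere in the paper and would need its own proof. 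A further wrinkle is that the two subtrees are not conditionally independent given $\ttT_N$, so the joint lower bound also needs care.

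The paper avoids all of this by reusing the machinery of Section~\ref{section:lower_bound}. It works in the modified tree $\ttT_n^{(N)}$ (where the first $N$ weights are collapsed onto the root) and considers the quantity $Q_n^{(N)}$ from \eqref{def:Q_n^(N)}. Expanding $(Q_n^{(N)})^2$ and isolating the part $\cT_1$ corresponding to pairs with most recent common ancestor equal to the root, the many-to-two estimates \eqref{eq:bound_T_2}--\eqref{eq:bound_T_3} show $\E[(Q_n^{(N)})^2-\cT_1]$ is negligible compared to $\E[Q_n^{(N)}]^2$ for $N$ large. Chebyshev and Markov then give $\P(\cT_1>0)\geq 1-O(\varepsilon)$, and on $\{\cT_1>0\}$ one has two vertices at height $t-x_n$ with common ancestor the root, hence $\diam(\ttT_n^{(N)})\geq 2(t-x_n)$. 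Since $\haut(\ttT_n)\geq\haut(\ttT_n^{(N)})$ implies $\diam(\ttT_n)\geq\diam(\ttT_n^{(N)})$, this closes the argument with no subtree analysis at all. The trick of transferring the first $N$ weights to the root is precisely what forces the common ancestor to be the root and sidesteps the problem your approach faces.
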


\paragraph{The case of i.i.d. weights} 
A natural setting to consider is to consider the case where we run the model starting with an i.i.d.\@ random sequence of weights $(\mathsf w_n)_{n\geq 1}$, say with law $\mu$ on $\intervalleoo{0}{\infty}$. 
In this case it is quite easy to check that, if $\mu$ admits a moment of order $2$, then the random sequence $(\mathsf w_n)_{n\geq 1}$ almost surely satisfies \eqref{eq:assumption_1} with $\gamma=1$, and also \eqref{eq:assumption_2}. Remark that the value of $\theta$ associated to $\gamma=1$ by \eqref{eq:definition_theta} is $\theta=1$. 
This directly allows to apply Theorem~\ref{thm:height_WRT} and Theorem~\ref{thm:diameter_WRT} in this setting.

If $\mu$ only has a moment of order $1+\epsilon$ for some positive $\epsilon$, then we still have the fact that the random sequence $(\mathsf w_n)_{n\geq 1}$ almost surely satisfies \eqref{eq:assumption_1} with $\gamma=1$. 
In this case, we get that the result of Theorem~\ref{thm:upper_bound_height_WRT} holds conditionally on the sequence $(\mathsf w_n)_{n\geq 1}$. 
Integrating this over the sequence $(\mathsf w_n)_{n\geq 1}$ entails that at least the upper-bound in Theorem~\ref{thm:height_WRT} is true, \emph{i.e}
\begin{equation}
\sup _{n\geq 1}\Pp{\haut(\ttT_n) \geq  e \log n - \frac{3}{2} \log \log n + b} \underset{b\rightarrow \infty}{\longrightarrow}0.
\end{equation}
We remark that this statement in the case of random weights is weaker than the one for deterministic weights, as the speed of the convergence to $0$ is not explicit here. 
In the statement of Theorem~\ref{thm:upper_bound_height_WRT}, the constant $C$ appearing on the right-hand side depends on the sequence of weights in a non-explicit way, and getting the same tail bound as in Theorem~\ref{thm:upper_bound_height_WRT} would require to integrate the value of this non-explicit function over the law of the sequence $(\mathsf w_n)_{n\geq 1}$.

\subsection{Application to preferential attachment trees}
We introduce here another family of growing trees and explain how to apply the results of Theorem~\ref{thm:height_WRT}, Theorem~\ref{thm:upper_bound_height_WRT} and Theorem~\ref{thm:diameter_WRT} to this other setting.

\paragraph{Definition of PATs}
We define a process on growing random trees called the \emph{preferential attachment tree with additive fitnesses}, or $\pa$ for short.
This model depends on a sequence $\mathbf{a}=(a_i)_{i\geq 1}$ of non-negative numbers, which represent the initial fitnesses of the vertices. 
For non-constant sequences $\mathbf{a}$, this model was introduced for the first time in \cite{ergun2002} by Erg\"un and Rodgers. 
As before, we iteratively construct a sequence of rooted trees $(\ttP_n)_{n\geq 1}$ starting from $\ttP_1$ containing only one root-vertex $\ttu_1$ labelled $1$, and evolving in the following manner.
The tree $\ttP_{n+1}$ is obtained from $\ttP_n$ by adding a vertex $\ttu_{n+1}$ with label $n+1$. 
The father of this new vertex is chosen to be the vertex with label $J_{n+1}$, where
\begin{align*}
\forall k\in \{1,\dots,n\}, \qquad \Ppsq{J_{n+1}=k}{\ttP_n}\propto \deg_{\ttP_n}^+(\ttu_k)+a_k,
\end{align*}
where $\deg_{\ttP_n}^+(\ttu_k)$ denotes the out-degree of $\ttu_k$ in the tree $\ttP_n$. 	
In the particular case where $n = 1$, we set $J_2=1$, even in the case $a_1=0$ for which the last display does not make sense.

\paragraph{Connection with WRTs with a random sequence of weights}

First recall that, for $a,b>0$, the distribution $\mathrm{Beta}(a,b)$ has density $\frac{\Gamma(a+b)}{\Gamma(a)\Gamma(b)}\cdot x^{a-1}(1-x)^{b-1} \cdot \ind{0\leq x\leq 1}$ with respect to Lebesgue measure.
If $b=0$ and $a>0$, we use the convention that the distribution $\mathrm{Beta}(a,b)$ is a Dirac mass at $1$.

Now, \cite[Theorem~1.1]{senizergues2021} tells us the following. 
For any sequence $\mathbf a$ of fitnesses, we define the associated random sequence $\boldsymbol{\mathsf w}^\mathbf a=(\mathsf{w}^\mathbf{a}_n)_{n\geq 1}$ through its corresponding partial sums $\mathsf{W}^\mathbf{a}_n=\sum_{k=1}^{n}\mathsf{w}^\mathbf{a}_n$ as 
\begin{equation}\label{def:random weight associated to sequence of fitnesses}
\mathsf{w}^\mathbf{a}_1=\mathsf{W}^\mathbf{a}_1 \coloneqq 1 
\qquad \text{and} \qquad 
\forall n\geq 2, \quad \mathsf{W}^\mathbf{a}_n \coloneqq \prod_{k=1}^{n-1}\beta_k^{-1},
\end{equation}
where the $(\beta_k)_{k\geq 1}$ are independent with respective distribution $\mathrm{Beta}(A_k+k,a_{k+1})$, and $A_k \coloneqq \sum_{i=1}^{k}a_i$. 
Then,
the distributions $\pa(\mathbf a)$ and $\wrt(\boldsymbol{\mathsf w}^\mathbf a)$ coincide.

The strategy to apply our results to preferential attachment trees is to use this connection and to check that under some assumptions on the sequence $\mathbf a$, the corresponding random sequence of weights $(\mathsf{w}^\mathbf{a}_n)_{n\geq 1}$ almost surely satisfies the assumptions of our theorems. 

\paragraph{Almost sure behaviour of \texorpdfstring{$(\mathsf{w}^\mathbf{a}_n)_{n\geq 1}$}{(wna)}}
We assume here that the sequence of fitnesses $\mathbf a=(a_i)_{i\geq1}$ satisfies
\begin{align}\label{eq:assumption_1 fitness sequence}\tag{$\mathcal H_{1,\zeta}^{\mathrm{PAT}}$}
A_n\coloneqq  \sum_{i=1}^na_i=\zeta\cdot n + \grandO{n^{1-\delta}},
\end{align}
for some $\zeta>0$ and some $\delta>0$. 
Then \cite[Proposition~1.3]{senizergues2021} tells us that under this assumption for the sequence $\mathbf a$, the random sequence $(\mathsf{w}^\mathbf{a}_n)_{n\geq 1}$ almost surely satisfies \eqref{eq:assumption_1} with $\gamma=\frac{\zeta}{\zeta+1}$.
This allows us to apply Theorem~\ref{thm:upper_bound_height_WRT} to preferential attachment trees with any sequence of fitnesses satisfying \eqref{eq:assumption_1 fitness sequence}, and obtain the following corollary.

\begin{corollary}\label{cor:upper bound on the height for PAT}Under assumption \eqref{eq:assumption_1 fitness sequence} for the sequence of fitnesses $\mathbf{a}$, we have
	\[\sup_{n\geq 1} \Pp{\haut(\ttP_n)\geq \gamma e^\theta \log n -\frac{3}{2\theta} \log\log n +b}\underset{b \rightarrow\infty }{\longrightarrow} 0,\]
	with $\gamma=\frac{\zeta}{\zeta+1}$ and $\theta$ defined from $\gamma$ as in \eqref{eq:definition_theta}.
\end{corollary}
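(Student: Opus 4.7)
The plan is to apply the WRT-to-PAT dictionary already laid out in the excerpt, and then reduce the statement to the explicit tail bound of Theorem~\ref{thm:upper_bound_height_WRT} through a conditioning argument. First, I would invoke the equality in distribution $\pa(\mathbf a) \stackrel{d}{=} \wrt(\boldsymbol{\mathsf w}^\mathbf a)$ recalled from \cite[Theorem~1.1]{senizergues2021}, so that the statement on $\haut(\ttP_n)$ becomes a statement on $\haut(\ttT_n)$ sampled from $\wrt(\boldsymbol{\mathsf w}^\mathbf a)$. Under assumption \eqref{eq:assumption_1 fitness sequence}, the cited \cite[Proposition~1.3]{senizergues2021} says that $\boldsymbol{\mathsf w}^\mathbf a$ almost surely satisfies \eqref{eq:assumption_1} with parameter $\gamma=\zeta/(\zeta+1)$; call $\Omega_0$ the almost sure event on which this holds.

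Next, on $\Omega_0$, I would apply Theorem~\ref{thm:upper_bound_height_WRT} conditionally on the sequence of weights. This yields a random constant $C(\boldsymbol{\mathsf w}^\mathbf a)$, measurable with respect to $\sigma(\boldsymbol{\mathsf w}^\mathbf a)$, such that for every $n \geq 1$ and every $b \geq 1$,
\[
	\Ppsq{\haut(\ttP_n) \geq \gamma e^{\theta} \log n - \tfrac{3}{2\theta} \log \log n + b}{\boldsymbol{\mathsf w}^\mathbf a}
	\leq C(\boldsymbol{\mathsf w}^\mathbf a) \, b \, e^{-\theta b},
\]
almost surely. Combining with the trivial upper bound of $1$, the conditional probability is at most $\min(1,\, C(\boldsymbol{\mathsf w}^\mathbf a)\, b \, e^{-\theta b})$, a quantity which does not depend on $n$ and tends to $0$ almost surely as $b \to \infty$.

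To conclude, I would integrate over $\boldsymbol{\mathsf w}^\mathbf a$. Since $\min(1,\, C(\boldsymbol{\mathsf w}^\mathbf a)\, b\, e^{-\theta b}) \leq 1$, the dominated convergence theorem gives
\[
	\E\!\left[ \min\!\left(1,\, C(\boldsymbol{\mathsf w}^\mathbf a)\, b\, e^{-\theta b}\right) \right]
	\underset{b \to \infty}{\longrightarrow} 0,
\]
and the bound on the right-hand side is uniform in $n$, yielding the desired statement.

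The main conceptual point — as already flagged by the authors — is that one cannot hope to keep the explicit tail $Cb e^{-\theta b}$ after integration, because the constant $C(\boldsymbol{\mathsf w}^\mathbf a)$ obtained from Theorem~\ref{thm:upper_bound_height_WRT} is not controlled explicitly and there is no \emph{a priori} integrability available for it. Truncating the conditional bound at $1$ before integration is the trick that sidesteps this issue, at the price of losing the explicit rate in $b$; this is why the corollary is phrased as a qualitative tightness statement rather than a quantitative tail bound. Aside from this, the proof is a direct transfer of Theorem~\ref{thm:upper_bound_height_WRT} through the beta-product representation \eqref{def:random weight associated to sequence of fitnesses}.
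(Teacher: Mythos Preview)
Your proof is correct and follows exactly the route the paper intends: the corollary is presented in the paper as a direct consequence of the identification $\pa(\mathbf a)=\wrt(\boldsymbol{\mathsf w}^\mathbf a)$, the almost sure verification of \eqref{eq:assumption_1} from \cite[Proposition~1.3]{senizergues2021}, and Theorem~\ref{thm:upper_bound_height_WRT} applied conditionally on the weights. Your truncation-at-$1$ plus dominated convergence step is the standard way to integrate out the weight-dependent constant, and is precisely the mechanism the authors allude to in the analogous ``i.i.d.\ weights'' paragraph; the paper gives no further detail beyond this.
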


In order to also get the lower bound given by Theorem~\ref{thm:height_WRT}, we need to assume some additional hypothesis on the sequence $\mathbf a$, namely
\begin{align}\label{eq:assumption_2 fitness sequence}\tag{$\mathcal H_2^{\pa}$}
\sum_{i=1}^{n}a_i^2=\grandO{n}.
\end{align}
The following lemma, proved in the appendix, then ensures that under \eqref{eq:assumption_1 fitness sequence} and \eqref{eq:assumption_2 fitness sequence}, the random sequence $(\mathsf{w}^\mathbf{a}_n)_{n\geq 1}$ almost surely  satisfies \eqref{eq:assumption_2}, so that the assumptions of Theorem~\ref{thm:height_WRT} and Theorem~\ref{thm:diameter_WRT} are satisfied.   
\begin{lemma}\label{lem:remainder sum p_i square for PAT}
	If the sequence $\mathbf{a}$ satisfies \eqref{eq:assumption_1 fitness sequence} and \eqref{eq:assumption_2 fitness sequence}, then almost surely 
	\begin{align*}
	\sum_{i=n}^{\infty}\left(\frac{\mathsf{w}^\mathbf{a}_i}{\mathsf{W}^\mathbf{a}_i}\right)^2 = \grandO{n^{-1}}.
	\end{align*}
\end{lemma}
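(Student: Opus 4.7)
The plan is to use the Beta representation of the weights given in~\eqref{def:random weight associated to sequence of fitnesses}: since $\mathsf{W}^{\mathbf{a}}_{i-1}/\mathsf{W}^{\mathbf{a}}_i = \beta_{i-1}$, one has $\mathsf{w}^{\mathbf{a}}_i/\mathsf{W}^{\mathbf{a}}_i = 1 - \beta_{i-1}$ for every $i \geq 2$, where the $\beta_k$ are independent and $1 - \beta_k \sim \mathrm{Beta}(a_{k+1}, A_k + k)$. Writing $T_n := \sum_{i \geq n}(\mathsf{w}^{\mathbf{a}}_i/\mathsf{W}^{\mathbf{a}}_i)^2$, the goal reduces to showing that $\sup_{n \geq 1} n T_n < \infty$ almost surely, where $T_n$ is a tail sum of the independent non-negative variables $(1-\beta_j)^2 \in [0,1]$.

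The first step is the expectation bound. By standard Beta moments, $\E[(1-\beta_k)^2] = a_{k+1}(a_{k+1}+1)/((A_{k+1}+k)(A_{k+1}+k+1))$; under~\eqref{eq:assumption_1 fitness sequence} one has $A_{k+1}+k \geq c k$ eventually, so $\E[(1-\beta_k)^2] \leq C(a_{k+1}^2+1)/k^2$. Summing over $k \geq n-1$ and using~\eqref{eq:assumption_2 fitness sequence} (which gives $\sum_{i\leq k} a_i^2 = O(k)$, and hence $\sum_{k \geq n} a_k^2/k^2 = O(1/n)$ by Abel summation) yields the deterministic estimate $\E[T_n] = \grandO{n^{-1}}$.

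The crucial ingredient is a sharp variance bound. Expanding $\Var((1-\beta_k)^2) = \E[(1-\beta_k)^4] - \E[(1-\beta_k)^2]^2$ explicitly via Beta moments and exploiting the cancellation between the two terms produces $\Var((1-\beta_k)^2) \leq C(a_{k+1}^3 + a_{k+1})/k^4$, which is strictly sharper than the naive $\grandO{(a_{k+1}^4 + 1)/k^4}$ given by $\Var \leq \E[X^2]$. Using the pointwise bound $a_k \leq \sqrt{C k}$ from~\eqref{eq:assumption_2 fitness sequence} gives $a_k^3/k^4 \leq C a_k^2/k^{7/2}$, and a further Abel summation produces $\sum_{k \geq n} \Var((1-\beta_k)^2) = \grandO{n^{-5/2}}$, which is asymptotically smaller than $(\E[T_n])^2$.

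Given this variance estimate, Kolmogorov's maximal inequality applied to the independent centered variables $(1-\beta_k)^2 - \E[(1-\beta_k)^2]$ on dyadic blocks $[2^j, 2^{j+1})$ shows that $\P(\sup_{n \in [2^j, 2^{j+1})} n|T_n - \E[T_n]| > M) \leq C/(M^2\, 2^{j/2})$. For each fixed $M > 0$ this is summable in~$j$, so a Borel-Cantelli argument gives $T_n - \E[T_n] = o(n^{-1})$ almost surely; combined with the expectation bound this yields $T_n = \grandO{n^{-1}}$ almost surely. The main delicate point is obtaining the sharp variance bound $\grandO{n^{-5/2}}$ for $\sum_{k \geq n}\Var((1-\beta_k)^2)$: the crude estimate $\Var \leq \E[(1-\beta_k)^4]$ only gives $\sum \Var = \grandO{n^{-2}}$, which matches the order of $(\E T_n)^2$ and is not summable enough on dyadic scales to conclude via Borel-Cantelli, so the cancellation in $\E[(1-\beta_k)^4] - \E[(1-\beta_k)^2]^2$ is essential.
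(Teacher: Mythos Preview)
Your proof is correct and follows essentially the same route as the paper's: both use the Beta representation $\mathsf{w}^{\mathbf a}_i/\mathsf{W}^{\mathbf a}_i = 1-\beta_{i-1}$, establish $\sum_{i\geq n}\E[(1-\beta_{i-1})^2]=O(n^{-1})$ via \eqref{eq:assumption_2 fitness sequence}, obtain the sharp variance bound $\Var((1-\beta_k)^2)\leq C(a_{k+1}+2)^3/k^4$, and conclude by a martingale-type concentration argument. The only cosmetic differences are that the paper invokes \cite[Lemma~A.3]{senizergues2021} as a black box for the almost-sure step (you spell out the Kolmogorov--Borel--Cantelli argument on dyadic blocks directly), and the paper bounds $\max_{n<i\leq 2n} a_i = O(n^{1-\delta})$ using \eqref{eq:assumption_1 fitness sequence} whereas you use the pointwise bound $a_k\leq C\sqrt{k}$ from \eqref{eq:assumption_2 fitness sequence} alone; both give a block variance that is $o(n^{-2})$ and hence summable.
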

This allows us to get the following analog of Theorem~\ref{thm:height_WRT} and Theorem~\ref{thm:diameter_WRT} in the context of preferential attachment trees.
\begin{corollary}\label{cor:height and diameter for PAT}Under the assumptions \eqref{eq:assumption_1 fitness sequence} and \eqref{eq:assumption_2 fitness sequence} for the sequence of fitnesses $\mathbf{a}$, the sequences
	\[\left(\haut(\ttP_n)- \gamma e^\theta \log n +\frac{3}{2\theta} \log\log n\right)_{n\geq 1}\]
	and 
	\[\left(\diam(\ttP_n)- 2\gamma e^\theta \log n +\frac{3}{\theta} \log\log n\right)_{n\geq 1}\]
	are tight, where $\gamma=\frac{\zeta}{\zeta+1}$ and $\theta$ is defined from $\gamma$ as in \eqref{eq:definition_theta}.
\end{corollary}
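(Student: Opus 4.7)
The plan is to realize $(\ttP_n)_{n\geq 1}$ as a weighted recursive tree driven by the random weight sequence $\boldsymbol{\mathsf w}^\mathbf a$ and then apply Theorems~\ref{thm:height_WRT} and~\ref{thm:diameter_WRT} conditionally on these random weights. Let $\mathcal W = \sigma(\boldsymbol{\mathsf w}^\mathbf a)$ denote the $\sigma$-algebra they generate. By the distributional identity between $\pa(\mathbf a)$ and $\wrt(\boldsymbol{\mathsf w}^\mathbf a)$ recalled around~\eqref{def:random weight associated to sequence of fitnesses}, we may construct $(\ttP_n)$ on a common probability space in such a way that, conditionally on $\mathcal W$, it is distributed as $\wrt(\boldsymbol{\mathsf w}^\mathbf a(\omega))$ with deterministic weights.

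First, I would invoke the two almost sure weight-level statements that put us in the setting of the main theorems: under \eqref{eq:assumption_1 fitness sequence}, Proposition~1.3 of~\cite{senizergues2021} asserts that $\boldsymbol{\mathsf w}^\mathbf a$ almost surely satisfies \eqref{eq:assumption_1} with $\gamma = \zeta/(\zeta+1)$, and together with \eqref{eq:assumption_2 fitness sequence}, Lemma~\ref{lem:remainder sum p_i square for PAT} guarantees that it almost surely satisfies \eqref{eq:assumption_2}. Thus there is an event $\Omega_0$ of full probability on which the realization $\boldsymbol{\mathsf w}^\mathbf a(\omega)$ fulfils the deterministic hypotheses of both Theorems~\ref{thm:height_WRT} and~\ref{thm:diameter_WRT}.

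Applying those theorems pointwise on $\Omega_0$ yields, for every $\omega \in \Omega_0$, conditional tightness under $\Ppsq{\,\cdot\,}{\mathcal W}(\omega)$ of the sequences
\[
H_n \coloneqq \haut(\ttP_n) - \gamma e^\theta \log n + \tfrac{3}{2\theta} \log \log n
\quad\text{and}\quad
D_n \coloneqq \diam(\ttP_n) - 2\gamma e^\theta \log n + \tfrac{3}{\theta} \log \log n.
\]
Introducing $M_\varepsilon(\omega) \coloneqq \inf\{M \geq 0 : \sup_{n\geq 1} \Ppsq{\abs{H_n} > M}{\mathcal W}(\omega) \leq \varepsilon\}$, conditional tightness means $M_\varepsilon < \infty$ almost surely, so for any $\varepsilon > 0$ I can choose a deterministic $M_0$ large enough that $\Pp{M_{\varepsilon/2} > M_0} < \varepsilon/2$. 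The standard decomposition
\[
\Pp{\abs{H_n} > M_0}
\leq \Pp{M_{\varepsilon/2} > M_0}
+ \Ec{\Ppsq{\abs{H_n} > M_0}{\mathcal W} \ind{M_{\varepsilon/2} \leq M_0}}
\leq \tfrac{\varepsilon}{2} + \tfrac{\varepsilon}{2}
\]
then gives tightness of $(H_n)_{n\geq 1}$. The same reasoning, with an analogous quantity attached to $D_n$, gives tightness of $(D_n)_{n\geq 1}$.

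The only slightly delicate step is the passage from conditional to unconditional tightness: one must work with a regular version of $\Ppsq{\,\cdot\,}{\mathcal W}$ so that $M_\varepsilon$ is an honest $\mathcal W$-measurable random variable. Once this formality is handled, the corollary is a direct consequence of Theorems~\ref{thm:height_WRT} and~\ref{thm:diameter_WRT} together with Lemma~\ref{lem:remainder sum p_i square for PAT}; the genuine probabilistic content needed for the PAT setting lies entirely in that lemma, whose proof is deferred to the appendix.
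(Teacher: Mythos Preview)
Your proposal is correct and follows the same approach as the paper: the paper does not give an explicit proof of this corollary, treating it as immediate from the fact that, under \eqref{eq:assumption_1 fitness sequence} and \eqref{eq:assumption_2 fitness sequence}, the random weight sequence $\boldsymbol{\mathsf w}^\mathbf a$ almost surely satisfies \eqref{eq:assumption_1} and \eqref{eq:assumption_2}, so that Theorems~\ref{thm:height_WRT} and~\ref{thm:diameter_WRT} apply conditionally on $\boldsymbol{\mathsf w}^\mathbf a$. You have simply made explicit the passage from almost-sure conditional tightness to unconditional tightness, which the paper leaves to the reader.
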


\paragraph{The case of i.i.d. fitnesses} As for the case of WRTs, a natural model is to start from a sequence $\mathbf{a}=(a_i)_{i\geq 1}$ that is i.i.d.\ with some distribution $\mu$ over $\intervallefo{0}{\infty}$ that is not concentrated on $0$.
From the discussion above, we see that if $\mu$ has a moment of order $1+\epsilon$ for some positive $\epsilon$, then \eqref{eq:assumption_1 fitness sequence} holds almost surely and Corollary~\ref{cor:upper bound on the height for PAT} applies where $\zeta$ is the first moment of $\mu$. 
If furthermore $\mu$ has a second moment, then $\mathbf a$ satisfies \eqref{eq:assumption_2 fitness sequence} almost surely and so, thanks to Lemma~\ref{lem:remainder sum p_i square for PAT}, the conclusions of Corollary~\ref{cor:height and diameter for PAT} hold.

\subsection{Related works and comments}

An asymptotic expansion for the height of recursive trees identical to Theorem~\ref{thm:height_WRT} has already been obtained for some specific models.
The first result of this type has been shown by Drmota~\cite{drmota2003} and Reed~\cite{reed2003} for  binary search trees: these trees are a sequence of random subtrees of the infinite binary tree, recursively built by adding new vertices uniformly at random among all the possible sites.
Note that these trees do not enter in the framework of WRTs.
The simplest WRT is the uniform recursive tree, obtained by taking all weights equal to 1.
In this case, the asymptotic expansion has been obtained by Addario-Berry and Ford \cite{addario-berryford2013}.
Slight modifications of the uniform recursive tree have also been covered: Hoppe trees, where all weights except $w_1$ equal 1, have been studied in \cite{leckeyneininger2013}, and another extension, where finitely many weights are different from 1, in \cite{hiesmayrislak2020}.

The asymptotic expansion in Theorem~\ref{thm:height_WRT} is also similar to the one for the maximal position in a branching Brownian motion \cite{bramson1978,bramson1983} or a branching random walk \cite{hushi2009,addario-berryreed2009,aidekon2013}. 
More generally, this behaviour for the maximum is shared by the universality class of log-correlated fields, see \cite{arguin2017} for a review.
For this large class of models, the maximum should behave asymptotically as
\begin{equation}\label{eq:general asymptotic expansion}
v \log n - \frac{3}{2 \beta_c} \log \log n+ O(1),
\end{equation}
where $n$ is the number of particles involved, $v$ is a constant depending on the model and $\beta_c$ is the critical inverse temperature of the system.
The fact that in our case $\beta_c = \theta$ can be seen from the fact that $\theta$ is the smallest real number such that a vertex $\ttu_i$ chosen in $\ttT_n$ proportionally to $\frac{w_i}{W_n} e^{\theta \haut(\ttu_i)}$ has a height asymptotically equivalent to $\haut(\ttT_n)$, see Lemma~\ref{lem:many_to_one} and \cite{senizergues2021}.
Furthermore, note that the precise upper tail in Theorem~\ref{thm:upper_bound_height_WRT} is known to be optimal for branching random walks, up to the value of the constant.

Connections between recursive trees and branching processes have been widely used since the works of Pittel \cite{pittel1984,pittel1994} and Devroye \cite{devroye1986,devroye1987,devroye1998}.
In particular the height of the uniform recursive tree can be deduced from the counterpart for branching random walks as follows. Consider a continuous-time branching random walk, starting with one particle at position 0 at time 0
and where each particle lives during an exponential time with parameter 1, during which it stays at position $h$ where it was born, and then splits into two particles at positions $h$ and $h+1$.
If $\tau_n$ denotes the first time where $n$ particles are alive in this branching random walk, then the distribution of the positions of the particles at time $\tau_n$ is the same as the distribution of the heights of vertices in the uniform recursive tree $\ttT_n$.
Since $\tau_n = \log n + O(1)$ in probability, the asymptotic development in Theorem~\ref{thm:height_WRT} for the uniform recursive tree follows directly from the result of Aïdékon \cite{aidekon2013}. 
Note that Addario-Berry and Ford \cite{addario-berryford2013} used a different connection with branching random walks to prove the asymptotic expansion for the height of the uniform recursive tree.

It is important to note that in the case of a general WRT, we cannot directly deduce Theorem~\ref{thm:height_WRT} from the result for branching random walks.
We can still link the tree $\ttT_n$ to an \textit{inhomogeneous} continuous-time branching random walk defined as follows. 
Let $\mathrm{Exp}(\lambda)$ denote the exponential distribution with parameter $\lambda$.
We start with one particle at position $0$ at time $0$ with an $\mathrm{Exp}(w_1)$ lifetime.
When a particle at position $h$ with an $\mathrm{Exp}(w_i)$ lifetime dies, it split into two particles, one at $h$ with an $\mathrm{Exp}(w_i)$ lifetime and another at $h+1$ with an $\mathrm{Exp}(w_k)$ lifetime, if this is the $(k-1)$-th death event in the whole process.
Then, with $\tau_n$ defined as before, the distribution of the positions of the particles at time $\tau_n$ is the same as the distribution of the heights of the vertices in the tree $\ttT_n$ with distribution $\wrt((w_n)_{n\geq 1})$.
However, in addition to being inhomogeneous, the branching random walk defined here does not satisfy the branching property: the progeny of a particle depends on the progeny of the other particles alive at the same time. 
Consequently, results from the literature cannot be directly applied to this model.
Nonetheless, we managed to adapt the methods used to prove asymptotics for the maximum of branching random walks (e.g.\@ in \cite{aidekon2013}) directly in context of weighted recursive trees, see Section~\ref{subsection:organization} for an overview of the proof.

In the case of preferential attachment trees with additive fitnesses with a constant sequence $\mathbf a$, 
we believe that the same type of comparison with a branching random walk as above could lead directly to the asymptotic expansion \eqref{eq:general asymptotic expansion}; however, we have failed to find a reference for that fact in the literature.
For non-constant sequences $\mathbf{a}$, deterministic or i.i.d., this connection would break down 
and obtaining such an asymptotic expansion would again not straightforwardly follow from known results.

A natural question is the convergence in distribution of the height of weighted recursive trees after centering as in Theorem~\ref{thm:height_WRT}.
This convergence has been proved for branching Brownian motion \cite{bramson1983,lalleysellke1987} and for non-lattice branching random walks \cite{aidekon2013} and the limit is a randomly shifted Gumbel random variable.
In the case of a lattice branching random walk (such as the ones mentioned before), no general result has been established so far and one can only hope that the centered height oscillates around a non-universal limiting distribution \cite{lifshits2012,corre2017}.
For recursive trees, this convergence is known only for binary search trees: it has been shown by Drmota \cite{drmota2003}, via analytic methods, and by Corre~\cite{corre2017}, who uses the connection with a similar continuous-time branching random walk as the one above and gives a different description of the limit than that of Drmota.

Another future direction of study would be the case where \eqref{eq:assumption_1} is not satisfied, in particular where $W_n$ grows sub- or super-polynomially. This is not done in this paper, but we expect universality to break in these cases.

Last, we mention some other contributions about WRTs that investigate other properties than the height, under various assumptions for the behaviour of the sequence of weights $(w_n)_{n\geq 1}$. 
The model of WRT has been introduced by Borovkov and Vatutin in \cite{borovkov2006}, 
in which they study the asymptotic behaviour of the height of the $n$-th vertex, as well as some properties on the degree of vertices in the tree, under the assumption that the weights have a certain product form.
Recently, Mailler and Uribe Bravo \cite{maillerbravo2019} proved the convergence of the weighted profile of the tree to a Gaussian, in the sense of weak convergence, for a variety of random sequences $(w_n)_{n\geq 1}$ that exhibit a very wide range of asymptotic behaviours. 
Convergence of the profile in a strong sense is also proved in \cite{senizergues2021}, under assumptions that ensure that the weights behave more or less polynomially, similar to the ones in this paper. 
Also recently, Lodewijks and Ortgiese studied in \cite{lodewijks2020} a similar model of weighted random graphs (which contains the case of trees) under the assumption that $(w_n)_{n\geq 1}$ is i.i.d.\@ with some distribution $\mu$. 
Under a first moment assumption on $\mu$, they prove the convergence of the empirical distribution of the degrees and the weights of vertices in the graph. 
They also describe the behaviour of the maximal degree under several different assumptions for the tail of $\mu$. 
The convergence of the degree distribution in the i.i.d.\ setting can also be seen as a particular case of some results by Iyer in \cite{iyer2020} and by Fountoulakis, Iyer,  Mailler and Sulzbach in \cite{fountoulakis2019}, both times proved in a more general model of growing graphs.

\subsection{Overview of the paper}
\label{subsection:organization}

The paper is mainly dedicated to the proof of Theorem~\ref{thm:height_WRT} concerning the height of weighted recursive trees and this proof is split into two parts: the upper bound and the lower bound. For the upper bound, we actually prove Theorem~\ref{thm:upper_bound_height_WRT} which implies the upper bound in Theorem~\ref{thm:height_WRT}.
Theorem~\ref{thm:diameter_WRT}, concerning the diameter of the trees, is a byproduct of the proof of the lower bound in Theorem~\ref{thm:height_WRT}.
Concerning preferential attachment trees, the only result we need to show is Lemma~\ref{lem:remainder sum p_i square for PAT} and it is proved in Appendix~\ref{section:PAT}.

Our strategy is to adapt the methods used to prove the asymptotic expansion of the maximum of a branching random walk and, for this, we rely on the same basic tools: many-to-one and many-to-two lemmas.
These lemmas, established in Section~\ref{sec:many-to-few}, allow us to compute the first and second moment of quantities of the form
\[
\sum_{i=1}^{n} \frac{w_i}{W_n} \cdot e^{\theta\haut(\ttu_i)}\cdot F(\haut(\ttu_i(1)),\haut(\ttu_i(2)),\dots ,\haut(\ttu_i(n))),
\]
where $\ttu_i(k)$ denotes the closest ancestor of $\ttu_i$ in $\ttT_k$, and $F$ is a real-valued function. 
We call the sequence $(\haut(\ttu_i(1)),\dots ,\haut(\ttu_i(n)))$ the \emph{trajectory} of vertex $\ttu_i$ in the construction of $\ttT_n$.
The first moment of the quantity appearing in the last display is expressed in terms of $\Ec{F(H_1,H_2,\dots,H_n)}$, where $(H_i)_{i\geq 1}$ is a time-inhomogeneous random walk, whose step distributions depend on the $w_i$'s.
The expression for the second moment involves two random walks that coincide at the beginning of their trajectory and that are then only weakly dependent: this differs from the behaviour observed in branching random walks where the trajectories of two different particles are independent after their splitting point.
These lemmas rely on a coupling result from \cite{maillerbravo2019}, which describes a joint construction of the tree as well as two distinguished vertices in the tree, in a way that makes the trajectory of those vertices easy to analyze.

In Section~\ref{section:upper_bound}, we prove Theorem~\ref{thm:upper_bound_height_WRT}, which implies the upper bound in Theorem~\ref{thm:height_WRT}.
Its proof relies only on first moment calculations using the many-to-one lemma.
The first step is to prove that for $K$ large enough, with high probability, for any $n\geq1$ we have $\haut(\ttu_n) \leq \gamma e^\theta \log n +K$, and we then work on this event in order to prevent the first moment from blowing up.
The end of the argument is then close to the method used by Aïdékon \cite{aidekon2013} for branching random walks: we use a first moment calculation on the number of high vertices on the aforementioned event, dealing separately with vertices whose trajectory reach a high point too soon, each leading to a large cluster of high vertices.

The lower bound in Theorem~\ref{thm:height_WRT} is established in Section~\ref{section:lower_bound}. We use a first and second moment calculation on a well-chosen quantity $Q_n$, which is the total weight of sufficiently high vertices in $\ttT_n$ whose trajectory has stayed below an appropriate barrier (see \eqref{def:Q_n^(N)}).
For the branching random walk, this calculation usually shows that $\P(Q_n > 0) \geq c$ with $c$ a positive constant and one can conclude using the branching property: wait until there is a large number $N$ of particles alive and then each of these particles has a probability $c$ of having a very high descendant, independently of each other.
In our case, this second step of the argument is harder to justify: the subtrees rooted at the $N$ first vertices are not independent and do not necessarily satisfy our assumptions (some of them can even be finite). 
Therefore, we use a different approach to show directly that $\P(Q_n > 0) \to 1$ as $n \to \infty$. 
This can be shown via a first and second moment calculation on $Q_n$ only if typically the most recent common ancestor of two vertices contributing to $Q_n$ is the root.
To this end, we first choose a very constraining barrier so that the most recent common ancestor has to be typically in the first $O(1)$ vertices.
Then, we consider a modified tree $\ttT_n^{(N)}$, where we transfer the weights of the $N$ first vertices to the root. 
We actually do our calculation on this tree, for which the most recent common ancestor of two vertices contributing to $Q_n$ is the root with high probability when $N \to \infty$.
Since the height of $\ttT_n^{(N)}$ is stochastically dominated by the height of $\ttT_n$, this is sufficient.

In Section~\ref{section:diameter}, we prove Theorem~\ref{thm:diameter_WRT} showing that the diameter of the tree is twice its height, up to a $O(1)$ term. The upper bound is trivial and the lower bound follows from the fact that, in the tree $\ttT_n^{(N)}$, we can find with high probability two very high vertices whose most recent common ancestor is the root.

We also need precise estimates for the time-inhomogeneous random walks appearing in the many-to-one and many-to-two lemmas. These random walks have Bernoulli jumps with smaller and smaller parameters and therefore known results cannot be directly applied. 
In Section~\ref{section:RW}, we compare these random walks with a time-homogeneous random walk with Poisson jumps to establish these estimates. Note that this section has to be written in a relative generality, so that the same result can be applied in different contexts in the paper.

Throughout the paper, $C$ and $c$ denote positive constants that can only depend on the weights $(w_i)_{i\geq 1}$ and that can change from line to line. Typically, $C$ should be thought as sufficiently large and $c$ as sufficiently small.
For sequences $(a_n)_{n\geq1}$ and $(b_n)_{n\geq1}$ of real numbers, we say that $a_n = O(b_n)$ as $n \to \infty$ if there is a constant $C$, depending only on the weights $(w_i)_{i\geq 1}$, such that $\abs{a_n} \leq C \abs{b_n}$ for any $n \geq 1$.
Let $\N \coloneqq \{0,1,2,\dots\}$ and, for $a,b \in \R$, $\intervalleentier{a}{b} \coloneqq \intervalleff{a}{b} \cap \Z$.

\section{Distinguished points and many-to-few lemmas}
\label{sec:many-to-few}

\subsection{Some terminology}

\paragraph{Recursive trees}
Recursive trees on $n$ vertices are rooted trees whose vertices are labeled with the integers $1$ to $n$ such that the labels along any path starting from the root form a strictly increasing sequence.
We denote $\mathbb{T}_n$ the set of such trees. 
Note that the root is necessarily the vertex with label $1$. 
According to these definitions, the sequence $(\ttT_n)_{n\geq 1}$ constructed in the introduction takes its values in $\bigcup_{n\geq 1}\mathbb T_n$.
We also introduce
\begin{align*}
	\mathbb{T}^\bullet_n \coloneqq \enstq{(\ttt, \ttu)}{\ttt\in \mathbb{T}_n,\  \ttu \in \ttt}, \quad \text{and} \quad \mathbb{T}^{\bullet\bullet}_n \coloneqq \enstq{(\ttt,\ttu,\ttv)}{\ttt\in \mathbb{T}_n,\  \ttu \in \ttt,\ \ttv\in \ttt},
\end{align*}
the set of recursive trees of size $n$ endowed with respectively one or two distinguished vertices.

\paragraph{Labels and ancestors of a vertices} For any $(\ttt,\ttu)\in\mathbb{T}_n^{\bullet}$, we write $\mathrm{lab}(\ttu)$ for the label of vertex $\ttu$ in the tree $\ttt$, which is an integer between $1$ and $n$.
For any $k\leq n $ we write $\ttu(k)$ for the most recent ancestor of $\ttu$ that has label smaller than or equal to $k$. 
For any $(\ttt,\ttu,\ttv) \in \mathbb{T}^{\bullet\bullet}_n$, we denote $\ttu\wedge \ttv$ the most recent common ancestor of $\ttu$ and $\ttv$ in the tree $\ttt$.

\subsection{Model with two distinguished vertices}
We introduce here a very useful construction of the trees $(\ttT_n)_{n\geq 1}$ which is coupled with the choice of some distinguished vertices on those trees. It is due to Mailler and Uribe Bravo \cite[Section~2.4]{maillerbravo2019}.  
For $n\geq 2$, let $B_n$ and $\widetilde{B}_n$ be two independent Bernoulli random variables with parameter $\frac{w_{n}}{W_{n}}$, independent for all $n\geq 2$.
For $n\geq 1$, let $J_n$ be a random variable on $\{1,\dots, n\}$ such that $\Pp{J_n=k}=\frac{w_k}{W_n}$, also independent of all other random variables.
We define a sequence $((\ttT_n,\ttD_n,\widetilde{\ttD}_n))_{n\geq 1}$, where at each time $n\geq 1$ we have $(\ttT_n,\ttD_n,\widetilde{\ttD}_n)\in \mathbb T_n^{\bullet \bullet }$, by the following procedure.
\begin{itemize}
	\item The tree with distinguished vertex $(\ttT_1,\ttD_1,\widetilde{\ttD}_1)$ is the only recursive tree with one vertex and the vertices $\ttD_1$ and $\widetilde{\ttD}_1$ are equal to this vertex.
	\item At every step $n \geq 1$, conditionally on $(\ttT_n,\ttD_n,\widetilde{\ttD}_n)$,
	\begin{itemize}
		\item if $(B_{n+1},\widetilde{B}_{n+1})=(1,0)$, the tree $\ttT_{n+1}$ is obtained by attaching a new vertex $\ttu_{n+1}$ to the distinguished vertex $\ttD_n$, and setting $\ttD_{n+1}=\ttu_{n+1}$, and $\widetilde{\ttD}_{n+1}=\widetilde{\ttD}_n$,
		\item if $(B_{n+1},\widetilde{B}_{n+1})=(0,1)$, the tree $\ttT_{n+1}$ is obtained by attaching a new vertex $\ttu_{n+1}$ to the distinguished vertex $\widetilde{\ttD}_n$, and setting $\ttD_{n+1}=\ttD_{n}$, and  $\widetilde{\ttD}_{n+1}=\ttu_{n+1}$,
		\item if $(B_{n+1},\widetilde{B}_{n+1})=(0,0)$, the tree $\ttT_{n+1}$ is obtained by attaching a new vertex $\ttu_{n+1}$ to the vertex $\ttu_{J_n}$, and setting $\ttD_{n+1}=\ttD_{n}$, and  $\widetilde{\ttD}_{n+1}=\widetilde{\ttD}_n$,
		\item if $(B_{n+1},\widetilde{B}_{n+1})=(1,1)$, the tree $\ttT_{n+1}$ is obtained by attaching a new vertex $\ttu_{n+1}$ to the distinguished vertex $\ttD_n$, and setting $\ttD_{n+1}=\ttu_{n+1}$, and $\widetilde{\ttD}_{n+1}=\ttu_{n+1}$. 
	\end{itemize}
\end{itemize}
The following proposition is \cite[Proposition~9]{maillerbravo2019}, slightly rephrased for our purposes. 
\begin{proposition}{\cite[Proposition~9]{maillerbravo2019}}\label{prop: construction deux points distingués}
The sequence $(\ttT_n)_{n\geq 1}$ defined above has distribution $\wrt(\boldsymbol{w})$. 

Furthermore, for any $n\geq 1$, conditionally on $\ttT_n$, the points $\ttD_n$ and $\widetilde{\ttD}_n$ are sampled on $\ttT_n$ independently with distribution $\mu_n$, where $\mu_n$ is the probability measure supported on $\ttT_n$ such that for any $1\leq k \leq n$ we have $\mu_n(\{\ttu_k\})= \frac{w_k}{W_n}$. 
This entails that for any $n\geq 1$ and any function $\Phi \colon \mathbb{T}^{\bullet\bullet}_n \to \R$, we have
\begin{align}\label{eq:égalité en loi deux points distingués}
	\Ec{\Phi(\ttT_n,\ttD_n,\widetilde{\ttD}_n)}=\Ec{\sum_{1\leq i,j\leq n}\frac{w_i w_j}{W_n^2}\Phi(\ttT_n,\ttu_i,\ttu_j)}.
\end{align}
For a function $\Psi \colon \mathbb{T}^{\bullet}_n \to \R$, this can be re-written as
\begin{align}\label{eq:égalité en loi un point distingué}
\Ec{\Psi(\ttT_n,\ttD_n)}=\Ec{\sum_{1\leq i\leq n}\frac{w_i}{W_n}\Psi(\ttT_n,\ttu_i)}.
\end{align}
\end{proposition}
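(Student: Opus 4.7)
The plan is to prove by induction on $n$ the joint statement that $\ttT_n$ is distributed as the weighted recursive tree at step $n$, and that conditionally on $\ttT_n$, the pair $(\ttD_n, \widetilde{\ttD}_n)$ is i.i.d.\@ with distribution $\mu_n$. Once this is established, the identities \eqref{eq:égalité en loi deux points distingués} and \eqref{eq:égalité en loi un point distingué} follow directly from the definition of $\mu_n$. The base case $n=1$ is immediate: both distinguished vertices equal the unique vertex $\ttu_1$, and $\mu_1$ is the Dirac mass at $\ttu_1$. For the induction step, set $p = w_{n+1}/W_{n+1}$ and note that $B_{n+1}, \widetilde{B}_{n+1}, J_n$ are independent of $(\ttT_n, \ttD_n, \widetilde{\ttD}_n)$, so the transition reduces to a case split on $(B_{n+1}, \widetilde{B}_{n+1}) \in \{0,1\}^2$.

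Let $V^*$ denote the vertex of $\ttT_n$ to which $\ttu_{n+1}$ is attached in the coupled construction. Grouping the four cases, $V^* = \ttD_n$ whenever $B_{n+1} = 1$ (cases $(1,0)$ and $(1,1)$, total probability $p$), $V^* = \widetilde{\ttD}_n$ in case $(0,1)$ (probability $p(1-p)$), and $V^* = \ttu_{J_n}$ in case $(0,0)$ (probability $(1-p)^2$). By the induction hypothesis, each of $\ttD_n, \widetilde{\ttD}_n, \ttu_{J_n}$ is $\mu_n$-distributed given $\ttT_n$, hence so is their mixture $V^*$ (the three weights sum to $1$). This is precisely the $\wrt$ attachment rule, so the induction hypothesis extends to show that $\ttT_{n+1}$ is distributed as a weighted recursive tree at step $n+1$.

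The only remaining point is to check that, conditionally on $\ttT_{n+1}$, the pair $(\ttD_{n+1}, \widetilde{\ttD}_{n+1})$ is still i.i.d.\@ with law $\mu_{n+1}$. For this I would compute the joint law of $(V^*, \ttD_{n+1}, \widetilde{\ttD}_{n+1})$ given $\ttT_n$ as a sum of four terms according to $(B_{n+1}, \widetilde{B}_{n+1})$; each term factorizes in the three coordinates, and the four contributions reassemble into the product $\mu_n(v) \cdot \kappa(d_1) \cdot \kappa(d_2)$, where $\kappa(\ttu_{n+1}) = p$ and $\kappa(\ttu_k) = (1-p)\mu_n(\ttu_k) = w_k/W_{n+1}$ for $k \leq n$, i.e.\@ $\kappa = \mu_{n+1}$. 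This factorization says that, given $\ttT_n$, the attachment vertex $V^*$ and the distinguished pair $(\ttD_{n+1}, \widetilde{\ttD}_{n+1})$ are independent with the correct marginals; since $\ttT_{n+1}$ is a deterministic function of $\ttT_n$ and $V^*$, further conditioning on $\ttT_{n+1}$ preserves the distribution of $(\ttD_{n+1}, \widetilde{\ttD}_{n+1})$, closing the induction. The only real work is the bookkeeping of the four cases; the coupling is engineered precisely so that this factorization pops out, which is also why the construction of Mailler and Uribe Bravo feels natural.
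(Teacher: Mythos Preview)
Your induction argument is correct. The paper does not actually prove this proposition: it is quoted verbatim (up to rephrasing) from Mailler and Uribe Bravo, so there is no in-paper proof to compare against. Your approach---proving simultaneously by induction that $\ttT_n$ has the $\wrt$ law and that $(\ttD_n,\widetilde{\ttD}_n)$ is conditionally i.i.d.\ $\mu_n$---is the natural one, and the key factorization you describe for the joint law of $(V^*,\ttD_{n+1},\widetilde{\ttD}_{n+1})$ given $\ttT_n$ does hold after checking the four cases (which you leave as bookkeeping, but it goes through cleanly). One small point worth making explicit: the passage from ``conditionally on $(\ttT_n,V^*)$'' to ``conditionally on $\ttT_{n+1}$'' is justified because $\ttT_{n+1}$ determines both $\ttT_n$ (delete $\ttu_{n+1}$) and $V^*$ (the parent of $\ttu_{n+1}$), so the two $\sigma$-algebras coincide.
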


\paragraph{Remarks about the construction}
In the previous construction, we can remark that the sequence $(\ttD_n)_{n \geq 1}$ is non-decreasing in the genealogical order so that for any $k\leq n$ we have $\ttD_n(k)=\ttD_k$. This is not the case for $(\widetilde{\ttD}_n)_{n \geq 1}$. Also, we can write 
 \begin{align}\label{eq:hauteur point distingué est somme de bernoulli}
 	\forall 1\leq k \leq n, \quad  \haut(\ttD_n(k))= \haut(\ttD_k)= \sum_{i=2}^{k}B_i.
 \end{align}
Denoting $I_n \coloneqq \max \{1\leq k \leq n : B_k=\widetilde{B}_k=1 \}$ with the convention that $B_1=\widetilde{B}_1=1$ to make the last set non-empty,  we can also write
\begin{align*}
	\forall 1\leq k \leq n, \quad  \haut(\widetilde{\ttD}_n(k))=\begin{cases}  \sum_{i=2}^{k}B_i &\text{if } k\leq I_n\\
	 \sum_{i=2}^{I_n}B_i + \sum_{i=I_n+1}^{n}\widetilde{B}_i &\text{otherwise}
	\end{cases}
\end{align*} 
Note that $I_n$ is equal to $\mathrm{lab}(\ttD_n\wedge \widetilde{\ttD}_n)$, the label of the most recent common ancestor between $\ttD_n$ and $\widetilde{\ttD}_n$.

\subsection{Change of measures and many-to-one}

\paragraph{Change of measure}
For any $n\geq 1$ the tree with two distinguished vertices $(\ttT_n,\ttD_n,\widetilde{\ttD}_n)$ defined above only depends on the sequences $(B_i)_{2\leq i \leq n}$, $(\widetilde{B}_i)_{2\leq i \leq n}$ and $(J_i)_{1\leq i \leq n-1}$.
Recall $\theta > 0$ is defined by \eqref{eq:definition_theta}.
We can introduce $\P_{\theta}$ in such a way that 
\begin{align}
\frac{\diff \P_{\theta}}{\diff \P}
= \frac{\prod_{i=2}^n e^{\theta B_i}}{Z_n}
= \frac{e^{\theta \haut(\ttD_n)}}{Z_n},
\end{align}
where
\begin{align}
Z_n 
\coloneqq \Ec{e^{\theta \haut(\ttD_n)}}=\Ec{\prod_{i=2}^n e^{\theta B_i}}
= \prod_{i=2}^{n}\left(1+(e^{\theta}-1)\frac{w_i}{W_i}\right).
\end{align}
Then, under this new measure, the random variables $(\widetilde{B}_i)_{2\leq i \leq n}$ and $(J_i)_{1\leq i \leq n}$ still have the same distribution and $(B_i)_{2\leq i \leq n}$ are independent Bernoulli r.v.\@ with respective parameter $p_i$ where
\begin{align}\label{eq:parametres bernoulli apres tilt exponentiel}
p_i\coloneqq \frac{e^{\theta}\frac{w_i}{W_i}}{1+(e^{\theta}-1)\frac{w_i}{W_i}}.
\end{align}

\begin{remark}
In general, we could define $\P_{z}$ in the same way for any other value $z\in \R$ but in this won't be needed for our analysis.
\end{remark} 

\paragraph{Many-to-one}
We first focus on the case of one distinguished point and use Proposition~\ref{prop: construction deux points distingués} for functions $\Psi$ which are defined in such a way that, for any $(\ttt,\ttu)\in \mathbb T_n^{\bullet}$, 
\begin{align*}
\Psi(\ttt,\ttu)= F(\haut(\ttu(1)),\haut(\ttu(2)),\dots ,\haut(\ttu(n))),
\end{align*}
for some function $F:\N^n\rightarrow \R$.
Using Proposition~\ref{prop: construction deux points distingués} and the discussion above, we can write
\begin{align*}
\Ec{\sum_{i=1}^{n}\frac{w_i}{W_n}e^{\theta\haut(\ttu_i)}\Psi(\ttT_n,\ttu_i)}
&= \Ec{e^{\theta\haut(\ttD_n)}\Psi(\ttT_n,\ttD_n)} \\
&= Z_n\cdot \E_{\theta}\left[\Psi(\ttT_n,\ttD_n)\right] \\
&= Z_n\cdot \E_{\theta}\left[F(\haut(\ttD_n(1)),\haut(\ttD_n(2)),\dots ,\haut(\ttD_n(n)))\right]
\end{align*}
Using the description of the sequence $(\haut(\ttD_n(k)))_{1\leq k \leq n}$ from the sequence $(B_2,B_3,\dots ,B_n)$ in \eqref{eq:hauteur point distingué est somme de bernoulli} and the description \eqref{eq:parametres bernoulli apres tilt exponentiel} of the distribution of $(B_2,B_3,\dots ,B_n)$ under $\P_{\theta}$ yields the following statement. 
\begin{lemma}[Many-to-one] \label{lem:many_to_one}
	For any function $F:\N^n\rightarrow \R$ we have
	\begin{align*}
	\Ec{\sum_{i=1}^{n}\frac{w_i}{W_n} e^{\theta\haut(\ttu_i)}
	\cdot F(\haut(\ttu_i(1)),\haut(\ttu_i(2)),\dots ,\haut(\ttu_i(n)))}
	= Z_n\cdot \Ec{F(H_1,H_2,\dots,H_n)},
	\end{align*}
	where $(H_1,H_2,\dots , H_n)$ is such that
	\begin{align*}
		H_k=\sum_{i=2}^{k}\ind{U_i\leq p_i},
	\end{align*}
	for $(U_i)_{i\geq 2}$ i.i.d.\@ uniform random variables on the interval $\intervalleoo{0}{1}$ under $\P$ and $(p_i)_{i\geq 2}$ defined in \eqref{eq:parametres bernoulli apres tilt exponentiel}.
\end{lemma}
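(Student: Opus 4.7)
The proof is essentially a three-step assembly of the observations already laid out in the paragraphs just above the statement. The plan is to first invoke the one-distinguished-vertex identity \eqref{eq:égalité en loi un point distingué} on a suitable functional, then perform the exponential change of measure $\P \to \P_\theta$, and finally identify the law of the ancestor heights under $\P_\theta$ with that of the Bernoulli random walk $(H_k)$.

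First I would apply Proposition~\ref{prop: construction deux points distingués}, specifically \eqref{eq:égalité en loi un point distingué}, to the test function
\[
\Psi(\ttt,\ttu) \coloneqq e^{\theta \haut(\ttu)} \cdot F\bigl(\haut(\ttu(1)), \haut(\ttu(2)),\dots,\haut(\ttu(n))\bigr),
\]
which is well defined on $\mathbb{T}_n^\bullet$ (note that $\haut(\ttu_i) = \haut(\ttu_i(n))$, so the exponential factor is part of $\Psi$). Since each summand of the left-hand side of the claimed identity is exactly $\frac{w_i}{W_n}\Psi(\ttT_n,\ttu_i)$, the proposition immediately rewrites this sum as $\Ec{e^{\theta \haut(\ttD_n)} F(\haut(\ttD_n(1)),\dots,\haut(\ttD_n(n)))}$.

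Second, I would invoke the exponential tilt. By the definition of $\P_\theta$ the density $e^{\theta \haut(\ttD_n)}/Z_n$ with respect to $\P$ equals one in expectation, which reduces to the explicit product formula for $Z_n$ thanks to the independence of the $B_i$ under $\P$ and a one-Bernoulli computation; this gives $\Ec{e^{\theta \haut(\ttD_n)} F(\dots)} = Z_n \cdot \mathbb{E}_\theta[F(\haut(\ttD_n(1)),\dots,\haut(\ttD_n(n)))]$. Finally, from \eqref{eq:hauteur point distingué est somme de bernoulli} we have $\haut(\ttD_n(k)) = \sum_{i=2}^k B_i$, and under $\P_\theta$ the variables $(B_i)_{2\leq i\leq n}$ are independent Bernoulli with respective parameters $p_i$ from \eqref{eq:parametres bernoulli apres tilt exponentiel}; hence the vector $(\haut(\ttD_n(1)),\dots,\haut(\ttD_n(n)))$ under $\P_\theta$ has the same law as $(H_1,\dots,H_n)$ defined by $H_k = \sum_{i=2}^k \ind{U_i \leq p_i}$ under $\P$. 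This concludes the identity.

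There is no genuine obstacle: the lemma is a packaging result, and each of the three steps is already written out in the preceding discussion. The only minor verification needed is that the Radon–Nikodym derivative $e^{\theta \haut(\ttD_n)}/Z_n$ truly defines a probability measure and that under this new measure the sequences $(\widetilde{B}_i)$ and $(J_i)$ retain their $\P$-distribution (being independent of $(B_i)$), which is exactly what is asserted in the change-of-measure paragraph of Section~2.3.
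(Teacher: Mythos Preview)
Your proposal is correct and follows essentially the same route as the paper: apply \eqref{eq:égalité en loi un point distingué} to pass to the distinguished point $\ttD_n$, absorb $e^{\theta\haut(\ttD_n)}$ via the tilt $\P\to\P_\theta$ (pulling out $Z_n$), and then use \eqref{eq:hauteur point distingué est somme de bernoulli} together with the Bernoulli$(p_i)$ law of the $B_i$ under $\P_\theta$ to identify the trajectory with $(H_1,\dots,H_n)$. The only cosmetic difference is that the paper keeps the factor $e^{\theta\haut(\ttu)}$ outside its $\Psi$, whereas you fold it in; the argument is identical.
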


\subsection{Many-to-two}

We now apply the same line of reasoning in the case of two distinguished points. 
We fix a function $F:\N^n\rightarrow\R$ and we define a function $\Psi \colon \mathbb T_n^{\bullet}\rightarrow \R$ by 
\begin{equation}
\Psi(\ttt ,\ttu) \coloneqq F(\haut(\ttu(1)),\haut(\ttu(2)),\dots ,\haut(\ttu(n))).
\end{equation}
We also fix a function $f \colon \intervalleentier{1}{n}\rightarrow \R$.
We can use \eqref{eq:égalité en loi deux points distingués} for the function $\Phi:\mathbb T_n^{\bullet \bullet}\rightarrow\R$ such that for every $(\ttt,\ttu,\ttv)\in \mathbb T_n^{\bullet \bullet}$,
\begin{align*}
\Phi(\ttt,\ttu,\ttv)= f(\mathrm{lab}(\ttu\wedge \ttv))\cdot e^{\theta\haut(\ttu)} \cdot \Psi(\ttt,\ttu)\cdot e^{\theta\haut(\ttv)}\cdot \Psi(\ttt,\ttv).
\end{align*} 
This yields
\begin{align}\label{eq:many-to-two premiere etape}
		\Ec{\sum_{1\leq i,j\leq n}\frac{w_i w_j}{W_n^2}\Phi(\ttT_n,\ttu_i,\ttu_j)}
			&=\Ec{f(\mathrm{lab}(\ttD_n\wedge \widetilde{\ttD}_n))\cdot e^{\theta \haut(\ttD_n)} \cdot e^{\theta\haut(\widetilde{\ttD}_n)}\cdot \Psi(\ttT_n,\ttD_n) \cdot \Psi(\ttT_n,\widetilde{\ttD}_n)}\notag\\
		&= \Ec{f(I_n)\cdot e^{\theta \haut(\ttD_n)} \cdot e^{\theta\haut(\widetilde{\ttD}_n)}\cdot \Psi(\ttT_n,\ttD_n) \cdot \Psi(\ttT_n,\widetilde{\ttD}_n)}\notag\\
		&= Z_n\cdot \E_{\theta}\left[f(I_n)\cdot e^{\theta\haut(\widetilde{\ttD}_n)}\cdot \Psi(\ttT_n,\ttD_n) \cdot \Psi(\ttT_n,\widetilde{\ttD}_n)\right]\notag\\
		&= Z_n \cdot \sum_{\ell=1}^n \P_{\theta}(I_n=\ell)\cdot f(\ell) \cdot \E_{\theta} \left[e^{\theta\haut(\widetilde{\ttD}_n)}\cdot \Psi(\ttT_n,\ttD_n) \cdot \Psi(\ttT_n,\widetilde{\ttD}_n) \  \middle| \  I_n=\ell\right],
\end{align}
where we can compute
\begin{align}\label{eq:loi de In}
	\P_{\theta}\left(I_n=\ell\right)=p_\ell q_\ell\cdot \prod_{i=\ell+1}^{n}\left(1-p_iq_i\right),
\end{align}
setting $q_i \coloneqq \frac{w_i}{W_i}$ and recalling the definition of $p_i$ in \eqref{eq:parametres bernoulli apres tilt exponentiel}, with the convention that $p_1=1$.

We can then rewrite the expression appearing in the $\ell$-th term of the sum appearing in \eqref{eq:many-to-two premiere etape} as 
\begin{align*}
&\E_{\theta} \left[e^{\theta\haut(\widetilde{\ttD}_n)}\cdot \Psi(\ttT_n,\ttD_n) \cdot \Psi(\ttT_n,\widetilde{\ttD}_n) \  \middle| \  I_n=\ell\right]\\
&=\E_{\theta} \left[e^{\theta\haut(\widetilde{\ttD}_n)}\cdot F(\haut(\ttD_n(1)),\dots ,\haut(\ttD_n(n))) \cdot F(\haut(\widetilde \ttD_n(1)),\dots ,\haut(\widetilde \ttD_n(n))) \ \middle| \  I_n=\ell\right].
\end{align*}
The random variables in the conditional expectation of the last display only depend on the sequences of Bernoulli random variables $(B_2,\dots, B_n)$ and $(\widetilde{B}_2,\dots, \widetilde{B}_n)$ and so does the conditioning.
By working out explicitly the distribution of $(B_2,\dots B_n,\widetilde{B}_2,\dots, \widetilde{B}_n)$ under $\mathbb P_{\theta}(\ \cdot\  \vert  I_n=\ell)$  we can rewrite the last display as
\begin{align}\label{eq:expression avant de biaiser le deuxieme spine}
\Ec{e^{\theta \widetilde{H}^\ell_n}\cdot F(H^\ell_1,\dots , H^\ell_n)\cdot F(\widetilde{H}^\ell_1,\dots,\widetilde{H}^\ell_n)}
\end{align}
where the sequences $(H^\ell_i)_{1\leq i \leq n}$ and $(\widetilde{H}^\ell_i)_{1\leq i \leq n}$ are defined from two sequences $(U_i)$ and $(V_i)$ of i.i.d.\ uniform random variables on $\intervalleoo{0}{1}$ under $\P$ as follows. For all $1\leq i\leq n$,
\begin{equation}\label{eq:definition H^l}
 H_i^\ell\coloneqq\sum_{j=2}^{i}\ind{U_j\leq p^\ell_j}, \qquad \text{where} \qquad  p^\ell_i\coloneqq \begin{cases}
p_i \quad &\text{for} \quad i<\ell,\\
1 &\text{for}\quad  i=\ell,\\
\frac{p_i (1-q_i)}{1-p_iq_i} &\text{for} \quad i>\ell,\\
\end{cases} 
\end{equation}
 and 
 \begin{equation*}
 		\widetilde{H}^\ell_i= \begin{cases}
 	H^\ell_i & \text{if} \quad i\leq \ell,\\
 	H^\ell_\ell+\sum_{j=\ell+1}^i\ind{V_j\leq \tilde{q}^\ell_j} & \text{if} \quad i> \ell,
 	\end{cases}
 \end{equation*}
 where $\tilde{q}^\ell_i\coloneqq q_i\ind{U_i>p_i^\ell}=q_i\cdot \ind{H^\ell_i=H^\ell_{i-1}}$ for all $\ell+1\leq i \leq n$.
Remark that the $(p^\ell_i)_{2\leq i\leq n}$ are deterministic but the $(\tilde{q}^\ell_i)_{\ell+1\leq i \leq n}$ are random. 
We transform further the expression \eqref{eq:expression avant de biaiser le deuxieme spine}. 
\begin{align*}
	&	\Ec{e^{\theta \widetilde{H}^\ell_n}\cdot F(H^\ell_1,\dots , H^\ell_n)\cdot F(\widetilde{H}^\ell_1,\dots, \widetilde{H}^\ell_n)} \\
	& =\Ec{e^{\theta H^\ell_\ell}\cdot F(H^\ell_1,\dots , H^\ell_n)\cdot \Ecsq{e^{\theta(\widetilde{H}^\ell_n-\widetilde{H}^\ell_\ell)} F(\widetilde{H}^\ell_1,\dots, \widetilde{H}^\ell_n)}{(H^\ell_i)_{1\leq i \leq n}}}
\end{align*}
We can rewrite the conditional expectation using a change of measure as follows:
\[
\Ecsq{e^{\theta(\widetilde{H}^\ell_n-\widetilde{H}^\ell_\ell)} F(\widetilde{H}^\ell_1,\dots, \widetilde{H}^\ell_n)}{(H^\ell_i)_{1\leq i \leq n}}
=\prod_{i=\ell+1}^{n}(1+(e^{\theta}-1)\tilde{q}^\ell_j) \cdot \Ecsq{F(\overline{H}_1^\ell, \dots, \overline{H}^\ell_n)}{(H^\ell_i)_{1\leq i \leq n}},
\]
where $(\overline{H}_i^\ell)_{1\leq i \leq n}$ is defined as 
\begin{equation}\label{eq:definition overline H^l}
\overline{H}^\ell_i= \begin{cases}
H^\ell_i & \text{if} \quad i\leq \ell,\\
H^\ell_\ell+\sum_{j=\ell+1}^i\ind{V_j\leq \tilde{p}^\ell_j}& \text{if} \quad i> \ell,
\end{cases}
\end{equation}
where
\begin{equation*}
\tilde{p}^\ell_i\coloneqq \frac{e^{\theta}\tilde{q}^\ell_i}{1+(e^{\theta}-1)\tilde{q}^\ell_i}=p_i\cdot \ind{H^\ell_i=H^\ell_{i-1}} .
\end{equation*}
In the end, we have
\begin{align*}
&\Ec{e^{\theta \widetilde{H}^\ell_n}\cdot F(H^\ell_1,\dots , H^\ell_n)\cdot F(\widetilde{H}^\ell_1,\dots, \widetilde{H}^\ell_n)} \\
&= \Ec{e^{\theta H^\ell_\ell}\cdot 
\left( \prod_{i=\ell+1}^{n}\left(1+(e^{\theta}-1)q_i \ind{H^\ell_i=H^\ell_{i-1}}\right) \right)
\cdot F(H^\ell_1,\dots , H^\ell_n)\cdot  F(\overline{H}_1^\ell, \dots, \overline{H}^\ell_n)}.
\end{align*}
This yields the following statement.
\begin{lemma}[Many-to-two] \label{lem:many-to-two}
For any $n\geq 1$ and any functions $F \colon \N^n \to \R$ and $f \colon \intervalleentier{1}{n} \to \R$, we have 
\begin{align*}
& \E\Biggl[ \sum_{1\leq i,j\leq n}
\frac{w_iw_j}{W_n^2} f(\mathrm{lab}(\ttu_i\wedge \ttu_j)) 
\cdot e^{\theta\haut(\ttu_i)} F(\haut(\ttu_i(1)),\dots ,\haut(\ttu_i(n)))
\cdot e^{\theta\haut(\ttu_j)} F(\haut(\ttu_j(1)),\dots ,\haut(\ttu_j(n))) \Biggr] \\
& = Z_n \sum_{\ell=1}^n \P_{\theta}(I_n=\ell) \cdot f(\ell)\cdot 
\E\Biggl[ e^{\theta H^\ell_\ell} \prod_{i=\ell+1}^{n}\left(1+(e^{\theta}-1)q_i \ind{H^\ell_i=H^\ell_{i-1}}\right)
\cdot F(H^\ell_1,\dots , H^\ell_n)\cdot  F(\overline{H}_1^\ell, \dots, \overline{H}^\ell_n)
\Biggr],
\end{align*}
where the random sequences $(H_i^\ell)_{1\leq i \leq n}$ and $(\overline{H}_i^\ell)_{1\leq i \leq n}$ appearing above are defined in \eqref{eq:definition H^l} and in \eqref{eq:definition overline H^l}, respectively.
\end{lemma}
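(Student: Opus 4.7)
My plan is to follow the chain of transformations already set up in the paragraphs preceding the statement. The strategy is to successively reinterpret the weighted sum over pairs of vertices as an expectation involving the pair of distinguished points $(\ttD_n,\widetilde{\ttD}_n)$, perform two successive exponential tilts, and extract the conditional law of the Bernoulli variables driving the construction.

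First, I would apply Proposition~\ref{prop: construction deux points distingués} (the two-distinguished-vertex construction) to the function
\[
\Phi(\ttt,\ttu,\ttv) \coloneqq f(\mathrm{lab}(\ttu\wedge\ttv)) \cdot e^{\theta\haut(\ttu)} \Psi(\ttt,\ttu)\cdot e^{\theta\haut(\ttv)}\Psi(\ttt,\ttv),
\]
so that the left-hand side equals $\E\bigl[f(I_n) e^{\theta(\haut(\ttD_n)+\haut(\widetilde\ttD_n))} \Psi(\ttT_n,\ttD_n)\Psi(\ttT_n,\widetilde\ttD_n)\bigr]$, using $I_n=\mathrm{lab}(\ttD_n\wedge\widetilde\ttD_n)$. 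Then I would apply the change of measure $\frac{\diff\P_\theta}{\diff\P}=e^{\theta\haut(\ttD_n)}/Z_n$, which absorbs one of the two exponential factors and produces the prefactor $Z_n$. Partitioning according to the value of $I_n=\ell$ and using that $\{I_n=\ell\}=\{(B_\ell,\widetilde B_\ell)=(1,1)\}\cap\bigcap_{i>\ell}\{(B_i,\widetilde B_i)\ne(1,1)\}$ together with the independence (under $\P_\theta$) of the pairs $(B_i,\widetilde B_i)$ gives the explicit formula for $\P_\theta(I_n=\ell)$ and the sum-over-$\ell$ structure.

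Next, I would identify the conditional law of $(B_i,\widetilde B_i)_{2\le i\le n}$ under $\P_\theta(\,\cdot\mid I_n=\ell)$: for $i<\ell$ the pairs are unchanged and give the walk $H^\ell$ with the same parameters $p_i$; at $i=\ell$ both variables equal $1$; and for $i>\ell$ the conditioning $(B_i,\widetilde B_i)\ne(1,1)$ (combined with their independence before conditioning) yields that $B_i$ is Bernoulli with parameter $p_i(1-q_i)/(1-p_iq_i)$, and $\widetilde B_i$ is $0$ if $B_i=1$ and Bernoulli$(q_i)$ otherwise. Reading off $\haut(\ttD_n(k))$ and $\haut(\widetilde\ttD_n(k))$ from \eqref{eq:hauteur point distingué est somme de bernoulli} then delivers the random walks $H^\ell$ and $\widetilde H^\ell$ of \eqref{eq:definition H^l}, with $\widetilde H^\ell$ using the \emph{random} parameters $\tilde q^\ell_i=q_i\ind{H^\ell_i=H^\ell_{i-1}}$ after time $\ell$.

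Finally, I would perform a second exponential tilt, but only on the increments of $\widetilde H^\ell$ between times $\ell$ and $n$, conditionally on the entire trajectory $(H^\ell_i)_{1\le i\le n}$. Since these increments are conditionally independent Bernoullis with (random) parameters $\tilde q^\ell_i$, the tilt by $e^{\theta(\widetilde H^\ell_n-\widetilde H^\ell_\ell)}$ produces the normalizing factor $\prod_{i=\ell+1}^{n}(1+(e^\theta-1)\tilde q^\ell_i)=\prod_{i=\ell+1}^{n}(1+(e^\theta-1)q_i\ind{H^\ell_i=H^\ell_{i-1}})$ and replaces the parameters $\tilde q^\ell_i$ by $\tilde p^\ell_i=p_i\ind{H^\ell_i=H^\ell_{i-1}}$, giving exactly the walk $\overline H^\ell$ of \eqref{eq:definition overline H^l}. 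Collecting these identities yields the claimed formula.

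The main obstacle is the careful bookkeeping in the third step: the pairs $(B_i,\widetilde B_i)$ for $i>\ell$ are conditioned on the event $\{(B_i,\widetilde B_i)\ne(1,1)\}$, and this breaks the symmetry between $B_i$ and $\widetilde B_i$ as well as introducing a dependence of $\widetilde B_i$ on $B_i$. This is what forces the parameters of $\widetilde H^\ell$ to be the random $\tilde q^\ell_i$ rather than a deterministic quantity, and is also what makes the subsequent tilt produce a non-trivial (trajectory-dependent) normalization factor. Once this conditional law is correctly identified, the rest of the computation consists of elementary manipulations of products of Bernoulli likelihoods.
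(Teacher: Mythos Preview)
Your proposal is correct and follows essentially the same approach as the paper: the derivation in the paragraphs preceding the lemma is precisely the chain you describe---apply Proposition~\ref{prop: construction deux points distingués} to $\Phi$, tilt by $e^{\theta\haut(\ttD_n)}/Z_n$, condition on $I_n=\ell$, read off the conditional law of the Bernoulli pairs to obtain $(H^\ell,\widetilde H^\ell)$, and then tilt the post-$\ell$ increments of $\widetilde H^\ell$ conditionally on $H^\ell$ to reach $\overline H^\ell$. Your identification of the conditional parameters $p_i(1-q_i)/(1-p_iq_i)$ and $\tilde q^\ell_i=q_i\ind{H^\ell_i=H^\ell_{i-1}}$, and of the resulting normalization $\prod_{i>\ell}(1+(e^\theta-1)q_i\ind{H^\ell_i=H^\ell_{i-1}})$, matches the paper exactly.
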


\section{Upper bound for the height}
\label{section:upper_bound}

The goal of this section is to prove Theorem~\ref{thm:upper_bound_height_WRT}, which implies in particular the upper bound in Theorem~\ref{thm:height_WRT}. Hence, we work under assumption \eqref{eq:assumption_1}, but not necessarily under assumption \eqref{eq:assumption_2}.

\subsection{Preliminaries}

We first state several consequences of assumption \eqref{eq:assumption_1}, which guarantees the existence of 
$\lambda> 0$ and $\alpha\in\intervalleoo{0}{1}$ such that 
$W_n = \lambda \cdot n^\gamma + O \left( n^{\gamma-\alpha} \right)$.
By \cite[Lemma~3.4]{senizergues2021}, it follows that
\begin{align} \label{eq:consequence_assumption_1}
	\sum_{i=1}^n \frac{w_i}{W_i} 
	= \gamma \log n 
	+ \cst + O(n^{-\alpha})
	\qquad \text{and} \qquad 
	\sum_{i=n}^\infty \left( \frac{w_i}{W_i} \right)^2
	= O(n^{-\alpha}).
\end{align}
Hence, for any $n \geq 0$, we have
\begin{align} \label{eq:bound_Z_n}
	Z_n 
	\leq \exp \left( (e^{\theta}-1) \sum_{i=1}^n \frac{w_i}{W_i} \right)
	\leq C n^{\gamma(e^{\theta}-1)},
\end{align}
where we recall $C$ denotes a positive constant depending only  on $(w_i)_{i\geq 1}$ that can change from line to line.
Moreover, recalling the definition of $p_n$ in \eqref{eq:parametres bernoulli apres tilt exponentiel}, it follows from \eqref{eq:consequence_assumption_1} that 
\begin{align} \label{eq:bound_sum_p_j}
	\sum_{j=1}^n p_j 
	= \gamma e^\theta \log n 
	+ \cst' + O(n^{-\alpha})
	\qquad \text{and} \qquad 
	\sum_{j=n}^\infty p_j^2
	\leq C n^{-\alpha},
\end{align}
as well as the following bound, obtained by subtracting the first part of \eqref{eq:consequence_assumption_1} at $n-1$ and to the one at $n$,
\begin{align} \label{eq:upper_bound_p_n}
	p_n \leq e^\theta \frac{w_n}{W_n} \leq C n^{-\alpha}.
\end{align}
We use these bounds repetitively throughout the section, sometimes without mentioning them.

Recall the random walk $(H_i)_{i\geq 1}$, which appears after applying the many-to-one lemma (see Lemma~\ref{lem:many_to_one}), has Bernoulli$(p_i)$ jumps.
In order to work with an approximately time-homogeneous and centered random walk, 
we introduce, for any $k \geq 0$,
\begin{align*}
	i_k & \coloneqq \inf \left\{ i \geq 1 : \sum_{j=2}^i p_j \geq k \right\},
\end{align*}
and $i_0 \coloneqq 1$. Moreover, we set
\begin{equation}\label{eq:definition Sk}
	S_k \coloneqq H_{i_k} - k.
\end{equation}
This random walk fits the framework of Section~\ref{section:RW} with $\mathbf j=(i_k)_{k\geq 0} $ and  $\mathbf r = (p_j)_{j\geq 2}$.

We need the following estimates for this time change.
Note that \eqref{eq:bound_sum_p_j} implies that 
\begin{align} \label{eq:bound_i_k}
	k - C
	\leq \gamma e^{\theta} \log i_k
	\leq k + C.
\end{align}
For any $n \geq 1$, let $\tau(n)$ denote the smallest integer $t$ such that $i_t \geq n$. It follows from \eqref{eq:bound_i_k} that
\begin{align} \label{eq:bound_tau(n)}
	\gamma e^{\theta} \log n - C
	\leq \tau(n) 
	\leq \gamma e^{\theta} \log n + C.
\end{align}
Moreover, in our case, the quantities introduced in \eqref{eq:def_delta}, which appear in the estimates for the random walk $(S_k)_{k\geq 0}$, can be bounded as follows
\begin{equation} \label{eq:bounds_delta_eta} 
	\delta_k \leq p_{i_k} 
	\leq C e^{-ck},
	\qquad
	\Delta_k \leq 1
	\qquad \text{and} \qquad
	\eta_k \leq C \exp\left(-c k^c\right),
\end{equation}
using \eqref{eq:upper_bound_p_n}, \eqref{eq:bound_i_k} and \eqref{eq:bound_sum_p_j}.
\begin{remark}
We repetitively need upper bounds for quantities of the form
\begin{align} \label{eq:quantity}
	& \Ec{e^{\theta\haut(\ttu_n)} 
	F(\haut(\ttu_n(i_0)),\haut(\ttu_n(i_1)),\dots,\haut(\ttu_n(i_{t-1})))
	g(\haut(\ttu_n)) },
\end{align}
with $n \geq 1$, $t \coloneqq \tau(n)$ and $F \colon \R^{t} \to \R_+$ and $g \colon \R \to \R_+$ measurable functions.
In order to avoid the repetition of the same argument, we explain here how we proceed.
Using the dynamics of the construction, conditionally on the tree $\ttT_{n-1}$, the vertex $\mathtt{u}_n$ is the child of any vertex $\mathtt{u}_j$ with $1\leq j\leq n-1$ with probability $\frac{w_j}{W_{n-1}}$. Note that in that case we have $\haut(\mathtt{u}_n)=\haut(\mathtt{u}_j(n-1))+1$.
Taking the conditional expectation with respect to $\ttT_{n-1}$, we get that \eqref{eq:quantity} equals
\begin{align*}
	& \Ec{\sum_{j=1}^{n-1} \frac{w_j}{W_{n-1}}
	e^{\theta(\haut(\ttu_j(n-1))+1)} 
	F(\haut(\ttu_j(i_0)),\dots,\haut(\ttu_j(i_{t-1})))
	g(\haut(\ttu_j(n-1))+1) } \\
	& \leq \frac{W_{i_t}}{W_{n-1}} 
	\Ec{\sum_{j=1}^{i_t} \frac{w_j}{W_{i_t}}
	e^{\theta(\haut(\ttu_j(i_t))+1)} 
	F(\haut(\ttu_j(i_0)),\dots,\haut(\ttu_j(i_{t-1})))
	g(\haut(\ttu_j(i_t))+1) },
\end{align*}
using $i_t \geq n \geq n-1$ to add non-negative terms in the sum and noting that, for $j \leq n-1$, $\ttu_j(n-1) = \ttu_j = \ttu_j(i_t)$.
Applying the many-to-one lemma (Lemma~\ref{lem:many_to_one}), the right-hand side of the last displayed equation equals
\begin{align*}
	\frac{W_{i_t}}{W_{n-1}} Z_{i_t} e^{\theta}
	\Ec{F(H_{i_0},\dots,H_{i_{t-1}})
	g(H_{i_t}+1) },
\end{align*}
By \eqref{eq:bound_i_k} and \eqref{eq:bound_tau(n)}, note that $i_t \leq C n$ for any $n \geq 2$ and it follows from \eqref{eq:assumption_1} that $W_{i_t}/W_{n-1} \leq C$.
Combining the above and using \eqref{eq:bound_Z_n}, we get
\begin{equation} \label{eq:one_to_one}
	\Ec{e^{\theta\haut(\ttu_n)} 
	F(\haut(\ttu_n(i_0)),\dots,\haut(\ttu_n(i_{t-1}))) 
	g(\haut(\ttu_n)) }
	\leq C n^{\gamma(e^{\theta}-1)}
	\Ec{F(H_{i_0},\dots,H_{i_{t-1}})
	g(H_{i_t}+1) }.
\end{equation}
\end{remark}

\subsection{Introducing the first barrier}

\begin{lemma} \label{lem:first_barrier}
Recall $\tau(n) \coloneqq \min\{ t \in \N : i_t \geq n\}$. There exists $C > 0$, such that, for any integer $K \geq 0$,
\[
	\P \left( \exists n \geq 1 : \haut(\ttu_n) > \tau(n) + K \right) 
	\leq C (K+1) e^{-\theta K}.
\]
\end{lemma}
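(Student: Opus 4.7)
The plan is to introduce the first violating time $T \coloneqq \inf\{n \geq 1 : \haut(\ttu_n) > \tau(n) + K\}$ (with $T = \infty$ if no such $n$ exists) and to bound $\P(T < \infty) = \sum_{n \geq 1} \P(T = n)$ via a first-moment computation built on the one-vertex inequality \eqref{eq:one_to_one}. The key observation is that on the event $\{T = n\}$ the vertex $\ttu_n$ is rigidly pinned: since every earlier vertex $\ttu_m$ with $m < n$ satisfies $\haut(\ttu_m) \leq \tau(m) + K$ and the parent of $\ttu_n$ is one such vertex, one must have $\haut(\ttu_n) \leq \tau(n) + K + 1$, which combined with $\haut(\ttu_n) > \tau(n) + K$ forces $\haut(\ttu_n) = \tau(n) + K + 1$. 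Moreover, every strict ancestor $\ttu_n(i_k)$ has label at most $i_k$, hence $\tau$ at most $k$, so that $\haut(\ttu_n(i_k)) \leq k + K$ for every $0 \leq k \leq \tau(n)-1$.

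Setting $t \coloneqq \tau(n)$, I then apply Markov's inequality with weight $e^{\theta \haut(\ttu_n)}$ together with \eqref{eq:one_to_one}, taking $F(h_0,\dots,h_{t-1}) = \prod_{k<t}\ind{h_k \leq k+K}$ and $g(h) = \ind{h = t+K+1}$, to obtain
\[
\P(T = n) \leq C\, n^{\gamma(e^{\theta}-1)}\, e^{-\theta(t+K+1)}\, \P\bigl(S_k \leq K\ \forall\, 0 \leq k < t,\ S_t = K\bigr).
\]
The defining relation \eqref{eq:definition_theta} for $\theta$ is equivalent to $\gamma\theta e^{\theta} - \gamma(e^{\theta}-1) = 1$, which combined with \eqref{eq:bound_tau(n)} yields $n^{\gamma(e^{\theta}-1)}\, e^{-\theta \tau(n)} = O(n^{-1})$. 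Summing the resulting bound over $n$, grouped by the value $t = \tau(n)$ (so that $n \in (i_{t-1}, i_t]$) and using $\sum_{n\in (i_{t-1},i_t]} n^{-1} = O(1)$ uniformly in $t$ thanks to \eqref{eq:bound_i_k}, I deduce
\[
\P\bigl(\exists n \geq 1 : \haut(\ttu_n) > \tau(n) + K\bigr) \leq C\, e^{-\theta K}\sum_{t \geq 0}\P\bigl(S_k \leq K\ \forall\, k < t,\ S_t = K\bigr).
\]

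The main remaining step, and what I expect to be the hardest part, is the renewal/ballot-type estimate
\[
\sum_{t\geq 0}\P\bigl(S_k \leq K\ \forall\, k < t,\ S_t = K\bigr) \leq C(K+1),
\]
which controls the expected number of visits of the time-inhomogeneous, approximately centered walk $(S_k)$ to level $K$ while it stays below the barrier $K+1$. This is where the bounds \eqref{eq:bounds_delta_eta} on $\delta_k$, $\Delta_k$ and $\eta_k$ become crucial: one compares $(S_k)$ with a time-homogeneous Poisson random walk as developed in Section~\ref{section:RW}, and transfers the classical renewal bounds across this comparison. Once this walk-level estimate is in hand, combining it with the display above yields $\P(T<\infty) \leq C(K+1)\,e^{-\theta K}$, as claimed.
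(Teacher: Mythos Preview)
Your decomposition is exactly the paper's: you introduce the first violating time, pin $\haut(\ttu_n)$ to $\tau(n)+K+1$, observe that all ancestors $\ttu_n(i_k)$ satisfy $\haut(\ttu_n(i_k))\leq k+K$, and apply \eqref{eq:one_to_one} together with $\gamma\theta e^\theta-\gamma(e^\theta-1)=1$. The only divergence is in how the random-walk probability is handled at the end.

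The paper does not sum first and then invoke a renewal argument. Instead it applies Lemma~\ref{lem:upper_bound_RW_double_barrier} (with $K=L$, $a=0$, $\lambda$ fixed) directly to each term, obtaining
\[
\Pp{S_{t}=K,\ \max_{k<t}S_k\leq K}\leq \frac{C(K+1)}{t^{3/2}}+\eta_t,
\]
where $\eta_t\leq Ce^{-ct^c}$ by \eqref{eq:bounds_delta_eta}. Plugging this in yields $\P(T=n)\leq C(K+1)e^{-\theta K}/\bigl(n(\log n)^{3/2}\bigr)$, which is summable over $n\geq 2$. This is shorter than your route: the ballot-type estimate Lemma~\ref{lem:upper_bound_RW_double_barrier} is already proved in Section~\ref{section:RW}, so no separate renewal bound for the sum $\sum_t\P(\cdot)$ is needed. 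Your renewal estimate is of course true (it follows immediately from the termwise $t^{-3/2}$ bound), but establishing it from scratch by coupling to the Poisson walk and transferring Green-function bounds would duplicate work already packaged in Lemma~\ref{lem:upper_bound_RW_double_barrier}.
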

Note that by \eqref{eq:bound_tau(n)}, a similar statement could be made with $\gamma e^{\theta} \log n$ instead of $\tau(n)$. 
However, this formulation is more convenient to prove and fits exactly our future purpose.
\begin{proof}
Let $B \coloneqq \{ \exists n \geq 1 : \haut(\ttu_n) > \tau(n) + K  \}$ denote the event we want to control.
Distinguishing according to the first integer $n$ such that $\haut(\ttu_n) > \tau(n) + K$, we have $B = \bigcup_{n\geq2} B_n$ where we set
\[
	B_n \coloneqq \{ \haut(\ttu_n) > \tau(n) + K,\ \forall m < n,
	 \haut(\ttu_m) \leq \tau(m) + K \}.
\]
On the event $B_n$, we have $\haut(\ttu_n(n-1)) \leq \tau(n-1) + K \leq \tau(n) + K$. 
But, on the other hand, note that $\haut(\ttu_n(n-1)) = \haut(\ttu_n) - 1$ so we necessarily have $\haut(\ttu_n) = \tau(n) + K +1$.
Hence, keeping only part of the constraints, we have
\begin{align*}
	\P(B_n)
	& \leq \Pp{\haut(\ttu_n) = \tau(n) + K + 1, \forall k < \tau(n), \haut(\ttu_n(i_k)) \leq k + K} \\
	& = e^{-\theta(\tau(n)+K + 1)} \cdot 
	\Ec{e^{\theta \haut(\ttu_n)} \1_{\{\haut(\ttu_n) = \tau(n) + K + 1\}} 
	\1_{\{\forall k < \tau(n),\ \haut(\ttu_n(i_k)) \leq k + K\}}} \\
	& \leq C e^{-\theta(\tau(n)+K)} n^{\gamma(e^{\theta}-1)}
	\Pp{H_{i_{\tau(n)}} +1 = \tau(n) + K + 1,\
		\forall k < \tau(n), H_{i_k} \leq k + K},
\end{align*}
applying \eqref{eq:one_to_one}. Recalling the definition of the walk $(S_k)$ in \eqref{eq:definition Sk}, this last probability equals
\begin{align*}
	\Pp{S_{\tau(n)} =  K,\ \forall k < \tau(n), S_k \leq K}
	& \leq \frac{C (K+1)}{\tau(n)^{3/2}},
\end{align*}
by Lemma~\ref{lem:upper_bound_RW_double_barrier} and \eqref{eq:bounds_delta_eta}.
Therefore, using \eqref{eq:bound_tau(n)} and $\gamma (e^{\theta}- 1 -\theta e^{\theta})=-1$, we finally obtain
\begin{align*}
	\P(B_n)
	& \leq \frac{C (K+1) e^{-\theta K}}{n (\log n)^{3/2}},
\end{align*}
and the result follows by summing over $n \geq 2$.
\end{proof}

\subsection{Proof of the upper bound for the height}

We now state and prove a key lemma for the proof of Theorem~\ref{thm:upper_bound_height_WRT}.
Let
\begin{align*}
	x_n & \coloneqq \left\lfloor \frac{3}{2\theta} \log \log n \right\rfloor.
\end{align*}
\begin{lemma} \label{lem:upper_bound_WRT}
	There exist constants $C,c>0$ such that for any integers $L,K \geq 0$, $a \geq K$ and $n \geq 1$, we have, setting $t \coloneqq \tau(n)$,
	\begin{align*}
		& \P \Bigg( \exists \ttu \in \ttT_n : 
			\haut(\ttu) = t-x_n+a,\ 
			\max_{k \in \intervalleentier{0}{t}} \haut(\ttu(i_k)) - k \leq K,\
			\max_{k \in \intervalleentier{t/2}{t}} \haut(\ttu(i_k)) - k = -x_n + a + L\Bigg) \\
		& \leq C (K+1) e^{-\theta L/4} e^{-\theta a}.
	\end{align*}
\end{lemma}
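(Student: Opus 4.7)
The plan is a first moment computation via the many-to-one lemma, followed by a random walk estimate. Setting $h := t - x_n + a$ (the prescribed height of $\ttu$) and applying a union bound over vertex labels together with Markov's inequality to introduce the exponential tilt, we have
\[
\P(\exists \ttu \in \ttT_n : \cdots) \leq e^{-\theta h} \sum_{m=1}^{n} \E\bigl[ e^{\theta \haut(\ttu_m)} \1_{E_m} \bigr],
\]
where $E_m$ denotes the event that the vertex $\ttu_m$ satisfies the three conditions of the lemma.

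Each term in the sum is handled by the many-to-one argument used to derive \eqref{eq:one_to_one}, applied to $\ttu_m$ rather than $\ttu_n$: conditioning on $\ttT_{m-1}$ and exploiting that $\ttu_m$ attaches to $\ttu_j$ with probability $w_j/W_{m-1}$ reduces the expectation to one involving the walk $(H_{i_k})_{k \geq 0}$ at the cost of a factor $C m^{\gamma(e^\theta - 1)}$. A subtlety is that for $k \geq \tau(m)$ one has $\ttu_m(i_k) = \ttu_m$, so the trajectory is constant, equal to $h$, on the tail $k \in \intervalleentier{\tau(m)}{t}$; in particular the global barrier condition forces $\tau(m) \geq h - K$, which restricts the sum. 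Using the identity $\gamma(e^\theta - 1) = \gamma \theta e^\theta - 1$ coming from \eqref{eq:definition_theta} to obtain $\sum_m m^{\gamma \theta e^\theta - 1} \leq C n^{\gamma \theta e^\theta}$, together with $\tau(n) = \gamma e^\theta \log n + O(1)$ and $e^{\theta x_n} \leq C(\log n)^{3/2}$, the bound reduces to
\[
\P(\exists \ttu \in \ttT_n : \cdots) \leq C (\log n)^{3/2} e^{-\theta a} \cdot \P(W),
\]
where $W$ is the walk event $\bigl\{S_t = a - x_n,\ \max_{k \in \intervalleentier{0}{t}} S_k \leq K,\ \max_{k \in \intervalleentier{t/2}{t}} S_k = a - x_n + L\bigr\}$ for the time-inhomogeneous walk $(S_k)$ from \eqref{eq:definition Sk}.

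It remains to establish the random walk estimate $\P(W) \leq C(K+1) e^{-\theta L/4}/(\log n)^{3/2}$. The factor $(K+1)/t^{3/2}$ is a classical ballot-type contribution for a centered random walk starting at $0$, staying $\leq K$, and ending at $a - x_n$. The delicate part is producing the $e^{-\theta L/4}$ factor, which accounts for the additional constraint that the maximum over the second half equals exactly $a - x_n + L$. I would extract this factor by decomposing at the first time $k^* \in \intervalleentier{t/2}{t}$ at which the maximum is attained: a ballot estimate on $\intervalleentier{0}{k^*}$ handles the walk reaching the level $a - x_n + L$ under the global barrier $K$, while on $\intervalleentier{k^*}{t}$ the drop from $a - x_n + L$ to $a - x_n$ under the local barrier can be bounded by combining a reflected ballot estimate with an exponential Markov inequality using a subcritical tilt $\theta' \approx \theta/4$ (which is available since the increments of $(S_k)$ have all exponential moments, being essentially Poisson$(1)-1$ variables). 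The time-inhomogeneous random walk estimates from Section~\ref{section:RW} should supply the technical input. The main obstacle is this walk estimate, particularly the clean extraction of the $e^{-\theta L/4}$ factor, as standard ballot bounds alone produce only polynomial factors in $L$ and some exponential tilting is genuinely needed.
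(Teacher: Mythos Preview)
Your overall first-moment strategy via the many-to-one lemma is on the right track, and decomposing according to the argmax location $j$ over $\intervalleentier{t/2}{t}$ is indeed the key move. However, your plan for extracting the factor $e^{-\theta L/4}$ is where the proposal breaks down.

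First, the reduction to a single walk event $\P(W)$ is too coarse. The event $E_m$ for vertex $\ttu_m$ genuinely depends on $m$ through $s=\tau(m)$, and the trajectory is constant on $\intervalleentier{s}{t}$. The paper keeps this dependence and uses it: the fact that $\haut(\ttu_m(i_{t-L-1})) - (t-L-1) = -x_n + a + L + 1$ would violate the second-half barrier forces $s \geq t - L$, and this lower bound on $s$ is what allows a useful ballot estimate on $\intervalleentier{j}{s}$ when the argmax $j$ satisfies $j \leq t - 2b$.

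Second, and more fundamentally, the factor $e^{-\theta L/4}$ does \emph{not} come from an exponential tilt of the walk on $[k^*,t]$, and your claim that ``standard ballot bounds alone produce only polynomial factors in $L$'' is what leads you astray. The paper obtains the exponential decay by a balancing argument: one splits the argmax location $j$ at a threshold $t-2b$ with $b=\lceil \tfrac{2}{\theta}e^{\theta L/2}\rceil$. For $j\leq t-2b$, two ballot estimates (on $\intervalleentier{0}{j}$ and on $\intervalleentier{j}{s}$, both via Lemma~\ref{lem:upper_bound_RW_double_barrier}) give a contribution of order $(K+1)(L+1)/\sqrt{b}\sim (K+1)(L+1)e^{-\theta L/4}$. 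For $j>t-2b$, the paper does \emph{not} follow the trajectory to time $t$: it bounds $E_j$ by the event that some vertex in the \emph{smaller} tree $\ttT_{i_j}$ satisfies $S_j(\ttu)=-x_n+a+L$, and applies the exponential change of measure at height $j-x_n+a+L$ rather than $t-x_n+a$. The extra $L$ in the height yields a free $e^{-\theta L}$ directly from the tilt, and summing over the $2b$ values of $j$ gives $(K+1)e^{-\theta L}b\sim (K+1)e^{-\theta L/2}$. Your proposed tilting on $[k^*,t]$ cannot replace this: when $k^*$ is well before $t$, a drop of $L$ over $t-k^*$ centered steps is not exponentially costly once $L\ll\sqrt{t-k^*}$, and that is exactly the regime here since $L\leq x_n=O(\log t)$.
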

The proof of this lemma is very close to the proof of Lemma~3.3 of Aïdékon \cite{aidekon2013} for the branching random walk, up to additional technicalities due to our model.
\begin{proof}
For brevity, we introduce $S_k(\ttu) \coloneqq \haut(\ttu(i_k)) - k$, which is exactly what is transformed into $S_k$ after applying the many-to-one lemma.
Let $E$ denote the event we are interested in.
We first distinguish according to the instant $j \in \intervalleentier{t/2}{t}$ where $\max_{k \in \intervalleentier{t/2}{t}} S_k(\ttu)$ is reached: we introduce, for any $1\leq m\leq n$, the event
\[
	E_j(m) 
	\coloneqq \Bigl\{ 
		S_t(\ttu_m) = -x_n+a,\ 
		\max_{k \in \intervalleentier{0}{t}} S_k(\ttu_m) \leq K,\ 
		\max_{k \in \intervalleentier{t/2}{t}} S_k(\ttu_m) = S_j(\ttu_m) = -x_n + a + L
	\Bigr\}
\]
and then $E_j \coloneqq \bigcup_{1\leq m \leq n} E_j(m)$.
Note here that $i_t \geq n \geq m$ so $\ttu_m(i_t) = \ttu_m$  and therefore $S_t(\ttu_m) = \haut(\ttu_m) - t$.
By the union bound, we have $\Pp{E} \leq \sum_{j \in \intervalleentier{t/2}{t}} \Pp{E_j}$, so we now have to bound $\Pp{E_j}$.
For this, we distinguish the cases $t/2 \leq j \leq t-2b$ and $t-2b < j \leq t$ with
\[
	b \coloneqq \left\lceil \frac{2}{\theta} e^{\theta L/2} \right\rceil,
\]
which satisfies $b > L$ for any $L\geq0$.
Moreover, we can restrict ourselves to the case where $L \leq K-a+x_n$, otherwise the probability in the lemma is simply zero.
This implies that $L \leq x_n$ and therefore $b \leq \frac{2}{\theta} (\log n)^{3/4}+1$. 
Hence, we consider from now $n$ large enough (independently of $K,a,L$) such that $b \leq t/8$. 
The case where $n$ is small is immediate by choosing the constant $C$ in the lemma large enough.

Start with the case $t/2 \leq j \leq t-2b$. We write, recalling that $S_t(\ttu_m) = \haut(\ttu_m) - t$,
\begin{align} \label{eq:first_bound_P(E_j)}
	\Pp{E_j}
	& \leq \Ec{ \sum_{m=1}^n \1_{E_j(m)}}
	= e^{-\theta(t - x_n+a)} 
	\sum_{m=1}^n \Ec{ e^{\theta \haut(\ttu_m)} \1_{E_j(m)}}.
\end{align}
We fix some $m \in \llbracket 1,n \rrbracket$ and let $s = \tau(m)$ be the smallest integer such that $i_s \geq m$.
If $s \leq t-L-1$, we have $\ttu_m(i_{t-L-1}) = \ttu_m$ and therefore, on the event $E_j(m)$,
\[
	S_{t-L-1}(\ttu_m) 
	= \haut(\ttu_m)-t+L+1
	= S_t(\ttu_m)+L+1
	= - x_n + a + L+1,
\]
which is a contradiction because $t-L-1 \geq t-b-1 \geq t/2$ and on that event we have $\max_{k \in \intervalleentier{t/2}{t}} S_k(\ttu_m)\leq - x_n + a + L$.
Hence, the event $E_j(m)$ is empty for any $m \leq i_{t-L-1}$ and we can restrict ourselves to the case $m \in \llbracket i_{t-L-1} + 1,n \rrbracket$.
Then, we have $j \leq t-2b \leq t-L-1 \leq s-1$, so $E_j(m)$ is contained in the event
\[
	  \biggl\{ 
		\haut(\ttu_m)-s = - x_n+a+t-s,\ 
		\max_{k \in \intervalleentier{0}{s-1}} S_k(\ttu_m) \leq K,\
		\max_{k \in \intervalleentier{t/2}{s-1}} S_k(\ttu_m) = S_j(\ttu_m) = - x_n + a + L
	\biggr\}.
\]
Applying \eqref{eq:one_to_one} with $m$ and $s$ instead of $n$ and $t$, we get
\begin{align*}
	& \Ec{ e^{\theta \haut(\ttu_m)} \1_{E_j(m)}} \\
	& \leq C m^{\gamma(e^{\theta}-1)}
	\P \biggl( S_s+1 = -x_n+a+t-s,\ 
		\max_{k \in \intervalleentier{0}{s-1}} S_k \leq K,\
		\max_{k \in \intervalleentier{t/2}{s-1}} S_k = S_j = -x_n + a + L \biggr) \\
	& \leq C m^{\gamma(e^{\theta}-1)}
	\Pp{\max_{k \in \intervalleentier{0}{j}} S_k \leq K,\ 
		\max_{k \in \intervalleentier{t/2}{j}} S_k = S_j = -x_n + a + L } \\
	& \hspace{3.64cm} {} \cdot 
	\Pp{\overline{S}_{s-j} = t-s-1-L,\
		\max_{k \in \intervalleentier{0}{s-j}} \overline{S}_k \leq 0},
\end{align*}
applying the Markov property at time $j$, setting $\overline{S}_k \coloneqq S_{j+k} - S_j$ and using that $s \geq t-L$ to extend $\max_{k \in \intervalleentier{0}{s-j-1}} \overline{S}_k$ to the time $k = s-j$.
Note that the random walk $\overline{S}$ fits also the framework of Section~\ref{section:RW}
and that the quantities in \eqref{eq:def_delta} are bounded as follows in that case: $\Delta_k \leq 1$ and $\eta_k \leq C e^{ct^c}$ for any $k \geq 0$, using that $j \geq t/2$.
Applying Lemma~\ref{lem:upper_bound_RW_double_barrier}, we get
\begin{align*}
	\Pp{\overline{S}_{s-j} = t-s-1-L,
		\max_{k \in \intervalleentier{0}{s-j}} \overline{S}_k \leq 0}
	& \leq \frac{C (L+2+s-t)}{(s-j)^{3/2}}
	\leq \frac{C (L+1)}{(t-b-j)^{\frac32}},
\end{align*}
where the last inequality comes from the fact that $s\geq t-L>t-b> j$. 
On the other hand, applying Lemma~\ref{lem:upper_bound_RW_double_barrier}  and \eqref{eq:bounds_delta_eta}, we have
\begin{align*}
	\Pp{\max_{k \in \intervalleentier{0}{j}} S_k \leq K, 
		\max_{k \in \intervalleentier{t/2}{j}} S_k = S_j = -x_n + a + L }
	& \leq 
	\begin{cases}
	\frac{C(K+1)}{t^{3/2}} \vphantom{\frac{p}{\frac{p}{p}}} 
	& \text{if } \frac{3t}{4} \leq j \leq t, \\
	\frac{C(K+1) (x_n+1)}{t^{3/2}} 
	& \text{if } \frac{t}{2} \leq j < \frac{3t}{4},
	\end{cases}
\end{align*}
where in the second case we simply omit the constraint $\max_{k \in \intervalleentier{t/2}{j}} S_k \leq -x_n + a + L$ and use that $K+x_n-a-L \leq x_n$.
Coming back to \eqref{eq:first_bound_P(E_j)} and using \eqref{eq:bound_tau(n)}, we proved
\begin{align*}
	\Pp{E_j}
	& \leq C e^{-\theta a} (\log n)^{3/2} n^{-\theta \gamma e^{\theta}} \sum_{m=i_{t-L-1}+1}^{n-1} 
	\Ec{ e^{\theta \haut(\ttu_m)} \1_{E_j(m)}} \\
	& \leq C e^{-\theta a} (\log n)^{3/2} 
	\cdot \frac{(K+1)}{t^{3/2}} 
	\left(1 + x_n \1_{t/2 \leq j < 3t/4} \right)
	\cdot \frac{(L+1)}{(t-b-j)^{3/2}},
\end{align*}
bounding the number of terms in the sum by $n$ and using that $\gamma(e^{\theta}-1-\theta e^{\theta}) = -1$.
Hence, we get
\begin{align*}
	\sum_{j=t/2}^{t-2b} \Pp{E_j}
	& \leq  C e^{-\theta a} (K+1) (L+1)
	\left( 
	\sum_{j=t/2}^{3t/4-1}
	\frac{(1+x_n)}{(t-b-j)^{3/2}}
	+
	\sum_{j=3t/4}^{t-2b}
	\frac{1}{(t-b-j)^{3/2}}
	\right) \\
	& \leq  C e^{-\theta a} (K+1) (L+1)
	\left( 
	\frac{\log \log n}{(\log n)^{1/2}}
	+
	\frac{1}{b^{1/2}} \right).
\end{align*}
Since $b \leq \frac{2}{\theta} (\log n)^{3/4}+1$, we have $\log \log n/(\log n)^{1/2} \leq C/b^{1/2}$.
Then, recalling the definition of~$b$, this gives the desired bound for this part of the sum over~$j$.

We now deal with the case $t-2b < j \leq t$. 
Note that, forgetting the constraints on $S_k(\ttu)$ for $k > j$, 
\begin{align*}
	E_j 
	& \subset \Bigl\{ \exists u \in \ttT_{i_j} : 
		\max_{k \in \intervalleentier{0}{j}} S_k(\ttu) \leq K,\ 
		\max_{k \in \intervalleentier{t/2}{j}} S_k(\ttu) = S_j(\ttu) = -x_n+a+L
	\Bigr\}.
\end{align*}
Then, proceeding similarly as before, 
\[
	\Pp{E_j}
	\leq e^{-\theta(j -x_n+a+L)} \sum_{m=i_{j-1}+1}^{i_j} 
	\E\Bigl[ e^{\theta \haut(\ttu_m)} 
	\1_{\lbrace\max_{k \in \intervalleentier{0}{j}} S_k(\ttu_m) \leq K\rbrace}
	\1_{\lbrace\max_{k \in \intervalleentier{t/2}{j}} S_k(\ttu_m) = S_j(\ttu_m) = -x_n+a+L\rbrace} \Bigr],
\]
where we noted that the event in the indicator function is empty if $m \leq i_{j-1}$ because in that case $S_{j-1}(\ttu_m) = S_{j}(\ttu_m) +1$.
In particular, note that here $\tau(m)=j$.
Using \eqref{eq:one_to_one} as before, the last expectation is smaller than
\begin{align*}
	&C m^{\gamma(e^{\theta}-1)}
	\Pp{\max_{k \in \intervalleentier{0}{j-1}} S_k \leq K,\  
		\max_{k \in \intervalleentier{t/2}{j-1}} S_k \leq -x_n+a+L,\
		S_j+1 = -x_n+a+L}\\
	&\leq C m^{\gamma(e^{\theta}-1)} \frac{(K+1)}{t^{3/2}},
\end{align*}
by Lemma~\ref{lem:upper_bound_RW_double_barrier}, noting that $j \geq t-2b \geq 3t/4$.
Hence, we get
\begin{align*}
	\sum_{j=t-2b+1}^{t} \Pp{E_j}
	& \leq C e^{-\theta (a+L)} (K+1) 
	\sum_{j=t-2b+1}^{t} \sum_{m=i_{j-1}+1}^{i_j} 
	(i_j)^{\gamma(e^{\theta}-1)} e^{-\theta j}
	\leq C e^{-\theta (a+L)} (K+1) b
\end{align*}
using that $e^{-\theta j} \leq C (i_j)^{-\theta \gamma e^{\theta}}$, bounding the number of terms in the sum over $m$ by $i_j$ and using again that $\gamma(e^{\theta}-1-\theta e^{\theta}) = -1$.
This concludes the proof.
\end{proof}
We can now proceed to the proof of Theorem~\ref{thm:upper_bound_height_WRT}, which implies the upper bound in Theorem~\ref{thm:height_WRT}.
\begin{proof}[Proof of Theorem~\ref{thm:upper_bound_height_WRT}]
Setting $t \coloneqq \tau(n)$ and using \eqref{eq:bound_tau(n)}, it is enough to prove, for any $b \geq 1$,
\begin{align*}
	\Pp{\exists \ttu \in \ttT_n : \haut(\ttu) \geq t-x_n+b} 
	& \leq C b e^{-\theta b}.
\end{align*}
For this, we first apply Lemma~\ref{lem:first_barrier} with $K = b$ to work on the event $\{\forall m \geq 1,\ \haut(\ttu_m) \leq \tau(m) + b \}$.
Then, we apply Lemma~\ref{lem:upper_bound_WRT} with all possible values of $a\geq b$ and $L\geq 0$, and $K=b$. The result follows from a union-bound.
%
%
\end{proof}

\section{Lower bound for the height}
\label{section:lower_bound}

\subsection{Strategy}

For any integer $N\geq 1$, we construct a new tree $\ttT_n^{(N)}$ from $\ttT_n$: we first remove all vertices with labels 2 through $N$ and then attach all of them and all of their children to the root.
Note that $\ttT_n^{(N)}$ has distribution $\wrt(\boldsymbol{w}^{(N)})$, 
where the sequence of weights $\boldsymbol w^{(N)}$ is related to the sequence $\boldsymbol w$ as follows:
\begin{align} \label{eq:def_w^(N)}
	w_i^{(N)}=
	\begin{cases}
	W_N & \text{if} \quad i=1,\\
	0 & \text{if} \quad 2\leq i\leq N,\\
	w_i & \text{if} \quad i\geq N+1.
	\end{cases}
\end{align}
In other words, the sequence $\boldsymbol w^{(N)}$ is obtained from $\boldsymbol w$ by "transferring" all the weight of vertices $2$ to $N$ to the first vertex, and leaving the rest unchanged. 
Our aim is to prove a lower bound for the height of $\ttT_n^{(N)}$, and the lower bound for $\ttT_n$ follows because $\haut(\ttT_n) \geq \haut(\ttT_n^{(N)})$.

We introduce the following quantities associated to this new sequence of weights $\boldsymbol w^{(N)}$:
\[
	W_n^{(N)} \coloneqq \sum_{j=2}^n w_j^{(N)}
	\qquad 
	p_n^{(N)} \coloneqq 
	\frac{e^{\theta}\frac{w_n^{(N)}}{W_n^{(N)}}}
		{1+(e^{\theta}-1)\frac{w_n^{(N)}}{W_n^{(N)}}},
	\qquad 
	i_k^{(N)} \coloneqq \inf \left\{ i \geq 1 : \sum_{j=2}^i p_j^{(N)} \geq k \right\}.
\]
As before we define
\[
x_n \coloneqq \left\lfloor \frac{3}{2\theta} \log \log n \right\rfloor.
\]
Then, for some $K \geq 0$, the quantity we use for our first and second moment argument is the following: for $n=i_t^{(N)}$, 
\begin{equation} \label{def:Q_n^(N)}
	Q_n^{(N)} 
	\coloneqq \sum_{m=1}^{n} \frac{w_m^{(N)}}{W_n^{(N)}} 
	e^{\theta \haut(\ttu_m)}
	\ind{\haut(\ttu_m) = t - x_n} 
	\ind{\max_{k \in \intervalleentier{0}{t/2}} \haut(\ttu_m(i_k^{(N)})) - k \leq K} 
	\ind{\max_{k \in \intervalleentier{t/2}{t}} \haut(\ttu_m(i_k^{(N)})) - k \leq -x_n},
\end{equation}
where $\haut(\ttu_m)$ refers implicitly to the height of $\ttu_m$ in $\ttT_n^{(N)}$. Note that the dependence of $Q_n^{(N)}$ in $K$ is also kept implicit.
The following lemma gives bounds for the first and second moment of $Q_n^{(N)}$ and is proved in Sections \ref{subsection:first_moment_Q} and \ref{subsection:second_moment_Q}.
\begin{lemma} \label{lem:first_and_second_moment}
For any $\varepsilon > 0$, there exist $K_0(\varepsilon),N_0(\varepsilon),n_0(\varepsilon)$ such that for any $K \geq K_0(\varepsilon)$, $N \geq N_0(\varepsilon)$ and $n \geq n_0(\varepsilon)$ such that $n = i_t^{(N)}$ for some $t \geq 1$,
we have
\begin{align}
	\Ec{Q_n^{(N)}} 
	& \geq \frac{Z_n^{(N)}}{t^{3/2}} \cdot (1-\varepsilon) \sqrt{\frac{2}{\pi}} 	
	\frac{K}{\rho^-},
	\label{eq:first_moment_Q_n} \\
	\Ec{\left( Q_n^{(N)} \right)^2} 
	& \leq \left( \frac{Z_n^{(N)}}{t^{3/2}} \cdot (1+\varepsilon) \sqrt{\frac{2}{\pi}} 	
		\frac{K}{\rho^-} \right)^2,
	\label{eq:second_moment_Q_n}
\end{align}
where the constant $\rho^-$ is defined in Section~\ref{subsection:known_results_RW}.
\end{lemma}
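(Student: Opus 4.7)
I apply the many-to-one lemma (Lemma~\ref{lem:many_to_one}) to $Q_n^{(N)}$ with the weights $\boldsymbol w^{(N)}$, then use the time change $k\mapsto i_k^{(N)}$ to obtain
\[
\Ec{Q_n^{(N)}} = Z_n^{(N)} \cdot \Pp{S_t^{(N)} = -x_n,\ \max_{k \in \intervalleentier{0}{t/2}} S_k^{(N)} \leq K,\ \max_{k \in \intervalleentier{t/2}{t}} S_k^{(N)} \leq -x_n},
\]
where $S_k^{(N)} = H_{i_k^{(N)}}^{(N)} - k$ is centered. This is a two-barrier ballot event for an approximately centered walk with Bernoulli jumps: start at $0$ below the barrier $K$, descend below $-x_n$ by time $t/2$, stay below $-x_n$ on $[t/2,t]$, and end exactly at $-x_n$. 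Using the lower-bound counterpart to Lemma~\ref{lem:upper_bound_RW_double_barrier} from Section~\ref{section:RW}, combined with the Brownian scaling (variance parameter $\rho^-$), I bound this probability below by $(1-\varepsilon)\sqrt{2/\pi}\cdot K/(\rho^- t^{3/2})$. The pre-factor $K$ arises from Tanaka-type asymptotics at the starting barrier, the endpoint factor is $O(1)$ rather than $O(x_n)$ because the walk ends exactly on the second barrier, and the Gaussian density factor is $1+o(1)$ since $|x_n|=O(\log\log n)=o(\sqrt t)$. Taking $K\geq K_0(\varepsilon)$ ensures the leading-order asymptotic.

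\textbf{Second moment.} Writing $\Theta$ for the trajectory indicator that defines $Q_n^{(N)}$, I apply Lemma~\ref{lem:many-to-two} with $f\equiv 1$ to obtain
\[
\Ec{(Q_n^{(N)})^2} = Z_n^{(N)} \sum_{\ell=1}^n \P_\theta^{(N)}(I_n=\ell)\, R_\ell,
\]
where $R_\ell = \Ec{e^{\theta H^\ell_\ell} \prod_{i=\ell+1}^n (1+(e^\theta-1) q_i^{(N)} \1_{H^\ell_i=H^\ell_{i-1}}) \cdot \Theta(H^\ell) \Theta(\overline H^\ell)}$. I split the sum into three ranges. For $2\leq\ell\leq N$, the weight $w_\ell^{(N)}=0$ forces $\P_\theta^{(N)}(I_n=\ell)=0$, yielding no contribution. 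For $\ell>N$, the bound $\P_\theta^{(N)}(I_n=\ell)\leq p_\ell^{(N)} q_\ell^{(N)} = O(\ell^{-2})$ together with a crude upper bound on $R_\ell$ (handling one spine trivially and the other via a single many-to-one) yields a total of order $(\Ec{Q_n^{(N)}})^2/N$, absorbed by $\varepsilon$ for $N\geq N_0(\varepsilon)$. For the main term $\ell=1$, rereading the derivation of the many-to-two in reverse, the correction product together with $\Ecsq{\Theta(\overline H^1)}{H^1}$ equals $\Ecsq{e^{\theta(\widetilde H^1_n - \widetilde H^1_1)} \Theta(\widetilde H^1)}{H^1}$, a conditional many-to-one quantity for the second spine. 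On $\{\Theta(H^1)=1\}$ the walk $H^1$ has only $O(\log n)$ non-zero increments, so using the $\ell^2$-summability of $(q_i^{(N)})$ inherited from \eqref{eq:assumption_2}, this conditional expectation equals $(1+o(1)) Z_n^{(N)} \Pp{\Theta(S^{(N)}_\cdot)=1}$ uniformly in $H^1$. Integrating over $H^1$ and applying \eqref{eq:first_moment_Q_n} gives $R_1 \leq (1+\varepsilon)(\Ec{Q_n^{(N)}})^2/Z_n^{(N)}$, yielding \eqref{eq:second_moment_Q_n}.

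\textbf{Main obstacle.} The hard part is the $\ell=1$ analysis: the two spines $H^1$ and $\overline H^1$ are not independent (they interact through the random jump parameters $\tilde q^1_i$), and the correction product must reconstitute the normalizer $Z_n^{(N)}$ of a second independent many-to-one at precision $1+o(1)$. This relies on the observation that on $\{\Theta(H^1)=1\}$ the walk $H^1$ is atypical (i.e., $\tilde q^1_i \neq q_i^{(N)}$) at only $O(\log n)$ indices, together with $\ell^2$-summability to make the discrepancy in the log product $o(1)$. A secondary, more bookkeeping obstacle is controlling the $\ell > N$ error uniformly in $K$ and $n$, which requires a tail bound on $R_\ell$ that does not degrade as $\ell$ grows.
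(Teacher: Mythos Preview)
Your first-moment argument is correct and matches the paper exactly: many-to-one, then Lemma~\ref{lem:estimate_RW_double_barrier} with $R^-(0)=1$.

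For the second moment, your outline has the right architecture (many-to-two, then split over the position $\ell$ of the common ancestor) but two of the three pieces are not justified at the level you describe.

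\textbf{The $\ell>N$ range.} This is where your proposal has a real gap. Handling one spine ``trivially'' and the other by a single many-to-one loses a factor of $t^{-3/2}$. Concretely, bounding $\Theta(\overline H^\ell)\le 1$ and the correction product by $Z_n/Z_\ell$ leaves you with $R_\ell\le (Z_n/Z_\ell)\,\E[e^{\theta H^\ell_\ell}\Theta(H^\ell)]$. After using the barrier to control $e^{\theta H^\ell_\ell}\le e^{\theta(s+K)}$ and summing over $\ell$ (grouped by $s$ with $i_{s-1}<\ell\le i_s$), the identity $1+\gamma(e^\theta-1-\theta e^\theta)=0$ makes $e^{\theta s}/Z_{i_{s-1}}$ grow like $i_s/N$, and the sum $\sum_s (\sum_{\ell} p_\ell^2)\cdot e^{\theta s}/Z_\ell$ is of order $t/N$, not $1/N$. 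The resulting bound is $Z_n^2(K+1)e^{\theta K}/(Nt^{1/2})$, which is larger than the target $Z_n^2 K^2/t^3$ by a factor $t^{5/2}$ for fixed $N$. The paper avoids this by using the barrier constraint on \emph{both} spines (Lemma~\ref{lem:estimates_RW_for_second_moment}): first integrate out $\overline H^\ell$ conditionally on $H^\ell$ to extract a factor $(t-s)^{-3/2}$ (or $t^{-3/2}$), then integrate the first spine to get a second factor $t^{-3/2}$ (or $s^{-3/2}$). The range is further split at $s=3t/4$ because for large $s$ the second spine only runs for $t-s$ steps, so the decay comes from $(t-s)^{-3}$, not $t^{-3}$.

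\textbf{The $\ell=1$ term.} Your idea of reading the many-to-two backwards to recover a conditional many-to-one is correct, and the inequality $\tilde Z_n\le Z_n$ is all you need on that side (the two-sided claim $\tilde Z_n=(1+o(1))Z_n$ is false: the Cauchy--Schwarz bound on $\sum_{i:H^1_i\neq H^1_{i-1}} q_i$ is of order $\sqrt{(\log n)/N}$, which diverges for fixed $N$). What remains is showing $\E[\Theta(\overline H^1)\mid H^1]\le (1+\varepsilon)\,\P(\Theta(S)=1)$ uniformly on $\{\Theta(H^1)=1\}$, and this cannot be read off directly from ``few jumps $+$ $\ell^2$-summability'' because $\Theta$ is not monotone. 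The paper does this by applying Lemma~\ref{lem:estimate_RW_double_barrier} to the walk $\overline S$ and checking, via Lemmas~\ref{lem:interactions} and~\ref{lem:uniform error bounds}, that the error terms $\Delta^{(\mathbf r,\mathbf j)}_{\lfloor t^{1/4}\rfloor}$ and $\eta^{(\mathbf r,\mathbf j)}_t$ for that walk are $O(N^{-c})$ uniformly in the realization of $H^1$. Your ``few jumps'' observation is exactly the content of Lemma~\ref{lem:interactions}, but you still need the random-walk estimate on top of it.

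Minor point: $p_\ell q_\ell=O(\ell^{-2})$ is not correct pointwise (only $O(\ell^{-2\alpha})$); what you actually use is the summed bound $\sum_{\ell>N}p_\ell^2=O(1/N)$ from \eqref{eq:assumption_2}.
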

\begin{proof}[Proof of the lower bound in Theorem~\ref{thm:height_WRT}] 
Consider some $b \geq 0$.
The tree $\ttT_n$ is higher than $\ttT_n^{(N)}$ so
\begin{align}
	\Pp{\haut(\ttT_n) \geq \gamma e^{\theta} \log n - x_n - b} 
	& \geq \Pp{\haut(\ttT_n^{(N)}) \geq \gamma e^{\theta} \log n - x_n - b} \nonumber \\
	& \geq \Pp{\haut(\ttT_m^{(N)}) \geq t + \gamma e^{\theta} \log N + C_0 - x_m - b},
\end{align}
with $t$ such that $i_t^{(N)} \leq n < i_{t+1}^{(N)}$ and $m \coloneqq i_t^{(N)}$ and using that $t \geq \gamma e^\theta (\log n - \log N)-C_0$ by \eqref{eq:estimate_i_k^(N)}, where $C_0$ is a constant.
Now fix some $\varepsilon$.
We take $K=K_0(\varepsilon)$ and $N = N_0(\varepsilon)$ given by Lemma~\ref{lem:first_and_second_moment} and assume that $n$ is large enough such that $m \geq n_0(\varepsilon)$.
Then, with $b = \gamma e^{\theta} \log N + C_0$, we get
\begin{align*}
	\Pp{\haut(\ttT_n) \geq \gamma e^{\theta} \log n - x_n - b} 
	\geq \Pp{\haut(\ttT_m^{(N)}) \geq t - x_m}
	\geq \Pp{Q_m^{(N)} > 0}
	\geq \frac{\E[Q_m^{(N)}]^2}{\E[(Q_m^{(N)})^2]},
\end{align*}
by Cauchy--Schwarz inequality.
By Lemma~\ref{lem:first_and_second_moment}, for any $\varepsilon > 0$, there exists $b \in \R$ such that
\begin{align*}
	\Pp{\haut(\ttT_n) \geq \gamma e^{\theta} \log n - x_n - b} 
	\geq \frac{(1-\varepsilon)^2}{(1+\varepsilon)^2},
\end{align*}
which proves the lower bound in Theorem~\ref{thm:height_WRT}.
\end{proof}

\subsection{Preliminaries}

Recall we work with an initial sequence $\boldsymbol w$ that satisfies assumption \eqref{eq:assumption_1} for some $\gamma>0$ and \eqref{eq:assumption_2}.
In this section, we list some bounds for the quantities depending on the modified sequence $\boldsymbol w^{(N)}$.
Anytime we add a superscript $(N)$ to a symbol that was implicitly a function of the weight sequence $\boldsymbol{w}$, it corresponds to the analog object for the weight sequence $\boldsymbol w^{(N)}$. 
Constants $C,c>0$ that can change from line to line and $O(\dots)$ terms can only depend on the initial sequence $\boldsymbol w$, but \textbf{not} on $N$.
Moreover, we denote by $\kappa_N$ a quantity that depends only on $N$,  tends to 0 as $N \to \infty$ and can change from line to line.

It follows from \eqref{eq:consequence_assumption_1} that, for $n \geq N+1$,
\begin{align} \label{eq:sum_w_i^(N)}
	\sum_{i=2}^{n}\frac{w_i^{(N)}}{W_i^{(N)}}
	& = \sum_{i=N+1}^{n}\frac{w_i}{W_i}
	=\gamma (\log n - \log N) + \kappa_N + O(n^{-\alpha}), \\
	\label{eq:sum_p_i^(N)}
	\sum_{i=2}^{n}p^{(N)}_i
	& = \sum_{i=N+1}^{n}p_i
	= \gamma e^{\theta} (\log n - \log N) + \kappa_N + O(n^{-\alpha}), \\
	\label{eq:bound_Z_n^(N)}
	Z_n^{(N)}
	& = \prod_{i=N+1}^{n}\left(1+(e^{\theta}-1)\frac{w_i}{W_i}\right)
	= \left( \frac{n}{N} \right)^{\gamma(e^{\theta}-1)}
		\exp \left( \kappa_N + O(n^{-\alpha}) \right).
\end{align}
Moreover, we have, for any $n \geq 1$,
\begin{align} \label{eq:bound_p_i^(N)}
	p_n^{(N)}\leq C n^{-\alpha},
\end{align}
and, by assumption \eqref{eq:assumption_2},
\begin{align} \label{eq:sum_p_i^(N)_square}
	\sum_{i=n}^\infty \left( p^{(N)}_i \right)^2 
	= \sum_{i=n \vee N}^\infty \left( p_i \right)^2 
	\leq  \frac{C}{n \vee N}. 
\end{align}
Concerning the time change $i_k^{(N)}$, one can check that, for any $k \geq 1$, 
\begin{align} \label{eq:estimate_i_k^(N)}
	i_k^{(N)}
	= N\cdot \exp\left(\frac{k}{\gamma e^{\theta}} + \kappa_N + O \left(e^{-ck}\right) \right),
\end{align}
using \eqref{eq:bound_p_i^(N)} and the fact that $i_k^{(N)} \geq i_1^{(N)} \geq N+1$. 
Note that the last display hold for $i_k^{(N)}\vee N$ for all $k\geq 0$.


The remaining part of this section is dedicated to the proof of Lemma~\ref{lem:first_and_second_moment}. 
From now on, we consider the tree associated to the sequence $\boldsymbol w^{(N)}$, but we \textbf{omit the dependence on} $N$ in notation, writing for example $w_i$ instead of $w_i^{(N)}$.

\subsection{First moment}
\label{subsection:first_moment_Q}

In this section, we prove \eqref{eq:first_moment_Q_n}.
Applying the many-to-one lemma and setting $S_k \coloneqq H_{i_k} - k$, we have
\begin{align*}
	\Ec{Q_n} 
	& = Z_n \cdot
	\Pp{S_t = - x_n,\ 
		\max_{k \in \intervalleentier{0}{t/2}} S_k \leq K,\
		\max_{k \in \intervalleentier{t/2}{t}} S_k \leq -x_n} 
	\geq Z_n \cdot (1-\varepsilon) \sqrt{\frac{2}{\pi}} 
		(K-1) \frac{1}{\rho^- t^{3/2}},
\end{align*}
applying Lemma~\ref{lem:estimate_RW_double_barrier} and noting that $R^-(0) = 1$.
The result follows.

\subsection{Second moment}
\label{subsection:second_moment_Q}

In this section, we prove \eqref{eq:second_moment_Q_n}, up to Lemma~\ref{lem:estimates_RW_for_second_moment}, which we prove in Section \ref{subsection:applying_the_RW_estimates}. Recall we assumed $n=i_t$.
We apply the many-to-two lemma (Lemma~\ref{lem:many-to-two}) with
\[
	F(\haut(\ttu(1)),\dots ,\haut(\ttu(n)))
	\coloneqq 
	\ind{\haut(\ttu(n)) = t - x_n} 
	\ind{\max_{k \in \intervalleentier{0}{t/2}} \haut(\ttu(i_k)) - k \leq K}
	\ind{\max_{k \in \intervalleentier{t/2}{t}} \haut(\ttu(i_k)) - k \leq -x_n},
\]
in order to get 
\begin{align} \label{eq:first_bound_second_moment_Q_n}
	\Ec{ Q_n^2 }
	& \leq Z_n^2 \cdot 
	\sum_{\ell=1}^n \frac{\P_{\theta}(I_n=\ell)}{Z_\ell} 
	\cdot 
	\Ec{e^{\theta H^\ell_\ell}\cdot F(H^\ell_1,\dots , H^\ell_n) \cdot  F(\overline{H}_1^\ell, \dots, \overline{H}^\ell_n)},
\end{align}
where we bounded $\prod_{i=\ell+1}^{n} (1+(e^{\theta}-1)q_i \ind{H^\ell_i=H^\ell_{i-1}})$ by $\prod_{i=\ell+1}^{n} (1+(e^{\theta}-1)q_i)= Z_n/Z_\ell$.
The following lemma gives us bounds for the expectation on the right-hand side of \eqref{eq:first_bound_second_moment_Q_n}. We postpone its proof to the next section.
\begin{lemma} \label{lem:estimates_RW_for_second_moment}
Let $1 \leq \ell \leq n$ and let $s$ be the smallest integer such that $i_s \geq \ell$. Let $K \geq 0$ and $N \geq K^{2}$.
\begin{enumerate}
\item\label{lem:estimates_RW_for_second_moment:it:large s} If $s \geq 3t/4$, then
\[
	\Ec{e^{\theta H^\ell_\ell}\cdot F(H^\ell_1,\dots , H^\ell_n) \cdot  F(\overline{H}_1^\ell, \dots, \overline{H}^\ell_n)}
	\leq 
	\frac{C e^{\theta (s-x_n)}}{(t-s)^3+1} \frac{(K +1)}{t^{3/2}}.
\]
\item\label{lem:estimates_RW_for_second_moment:it:intermediate s} If $1 \leq s < 3t/4$, then
\[
	\Ec{e^{\theta H^\ell_\ell}\cdot F(H^\ell_1,\dots , H^\ell_n) \cdot  F(\overline{H}_1^\ell, \dots, \overline{H}^\ell_n)}
	\leq 
	\frac{C e^{\theta (s+K)}}{t^3} \frac{(K +1)}{s^{3/2}}.
\]
\item\label{lem:estimates_RW_for_second_moment:it:first term} If $\ell = 1$, we consider some $\varepsilon > 0$ and let $K_0$ and $n_0$ be the constants given by Lemma~\ref{lem:estimate_RW_double_barrier}.
If $n \geq n_0$ and $K \in \llbracket K_0, n^{1/4} \rrbracket$, then
\[
	\Ec{e^{\theta H^\ell_\ell}\cdot F(H^\ell_1,\dots , H^\ell_n) \cdot  F(\overline{H}_1^\ell, \dots, \overline{H}^\ell_n)} 
	\leq 
	\left( (1+\varepsilon) \sqrt{\frac{2}{\pi}} 
			\frac{(K+ C N^{-c})}{\rho^- t^{3/2}} \right)^2.
\]
\end{enumerate}
\end{lemma}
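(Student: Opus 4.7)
The plan is to reduce each of the three bounds to barrier estimates for the rescaled centered walks $S^\ell_k \coloneqq H^\ell_{i_k} - k$ and $\overline{S}^\ell_k \coloneqq \overline{H}^\ell_{i_k} - k$, which both fit the framework of Section~\ref{section:RW}: the modified rates $p^\ell_i$ agree with $p_i$ up to the forced jump $p^\ell_\ell=1$ and an $O(p_i q_i)$ correction for $i>\ell$ whose sum is finite by \eqref{eq:assumption_2}. In these coordinates, the event $\{F(H^\ell)=1\}$ becomes (up to an $O(1)$ discrepancy between $\ell$ and $i_s$) the double-barrier event that $S^\ell$ ends at $-x_n$ at rescaled time $t$, stays $\leq K$ on $\intervalleentier{0}{t/2}$ and $\leq -x_n$ on $\intervalleentier{t/2}{t}$; similarly for $\overline{S}^\ell$.

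The first step is to decouple the two walks via the tower property:
\[
\Ec{e^{\theta H^\ell_\ell} F(H^\ell) F(\overline{H}^\ell)}
= \Ec{e^{\theta H^\ell_\ell} F(H^\ell)\, \Ecsq{F(\overline H^\ell)}{H^\ell}}.
\]
Conditionally on $H^\ell$, $\overline H^\ell$ on $(\ell,n]$ is a sum of independent Bernoullis with rates $\tilde p^\ell_i=p_i\ind{H^\ell_i=H^\ell_{i-1}}$, which differ from $p_i$ only at the jump times of $H^\ell$. Since the missing rate satisfies $\sum_{i>\ell}(p_i-\tilde p^\ell_i)\leq \sum_{i>\ell} p_i^2=O(1)$ by \eqref{eq:assumption_2}, the conditional law of $\overline H^\ell$ can be compared (up to a multiplicative $1+o(1)$) with that of an independent copy of the time-inhomogeneous walk started from $H^\ell_\ell$ at time $\ell$. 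This comparison lets us bound $\Ecsq{F(\overline H^\ell)}{H^\ell}$ by a probability for such an independent walk, which we then estimate using Lemma~\ref{lem:upper_bound_RW_double_barrier} in cases~\ref{lem:estimates_RW_for_second_moment:it:large s} and~\ref{lem:estimates_RW_for_second_moment:it:intermediate s}, and using the sharp Lemma~\ref{lem:estimate_RW_double_barrier} in case~\ref{lem:estimates_RW_for_second_moment:it:first term}.

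It then remains to evaluate the remaining expectation over $H^\ell$, which we do by applying the Markov property at rescaled time $s$. In case~\ref{lem:estimates_RW_for_second_moment:it:large s}, since $s\geq 3t/4\geq t/2$ the barrier forces $H^\ell_\ell\leq s-x_n$, so $e^{\theta H^\ell_\ell}\leq e^{\theta(s-x_n)}$; the $S^\ell$-contribution on $\intervalleentier{0}{s}$ is $O((K+1)/t^{3/2})$ via Lemma~\ref{lem:upper_bound_RW_double_barrier}, and the contributions of $S^\ell$ and $\overline{S}^\ell$ on $\intervalleentier{s}{t}$ (each a walk ending at $-x_n$ from a value $\leq -x_n$) are each $O(1/(t-s)^{3/2})$, giving jointly $O(1/((t-s)^3+1))$, where the $+1$ handles the endpoint case $s=t$. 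In case~\ref{lem:estimates_RW_for_second_moment:it:intermediate s} we use $H^\ell_\ell\leq s+K$ and hence $e^{\theta H^\ell_\ell}\leq e^{\theta(s+K)}$; the $S^\ell$-contribution on $\intervalleentier{0}{s}$ is $O((K+1)/s^{3/2})$, and the joint post-$s$ contribution is $O(1/(t-s)^3)=O(1/t^3)$ since $t-s>t/4$. In case~\ref{lem:estimates_RW_for_second_moment:it:first term}, the factor $e^{\theta H^\ell_\ell}=e^\theta$ is constant and both walks behave as almost-independent copies of the rate-$p_i$ walk starting from $0$ at time~$1$; applying the sharp two-sided asymptotic of Lemma~\ref{lem:estimate_RW_double_barrier} to each walk yields the squared first-moment bound, with the $CN^{-c}$ correction absorbing the $N$-dependent remainders appearing in \eqref{eq:sum_p_i^(N)}--\eqref{eq:estimate_i_k^(N)}.

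The main obstacle I expect is making the decoupling in the first step quantitatively sharp enough for case~\ref{lem:estimates_RW_for_second_moment:it:first term}, where a matching upper bound is required and one cannot afford multiplicative losses larger than $1+\varepsilon$. This forces a quantitative comparison between $\overline H^\ell$ and an independent rate-$p_i$ walk, crucially exploiting the summability $\sum p_i^2<\infty$ to control the coupling defect. For cases~\ref{lem:estimates_RW_for_second_moment:it:large s} and~\ref{lem:estimates_RW_for_second_moment:it:intermediate s}, only the polynomial decay rates in $t$, $s$ and $t-s$ matter, and $O(1)$ multiplicative losses are acceptable.
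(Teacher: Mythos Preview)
Your outline for Part~\ref{lem:estimates_RW_for_second_moment:it:first term} is essentially the paper's argument, and the tower-property decoupling is indeed the first step in all three cases. However, your treatment of Parts~\ref{lem:estimates_RW_for_second_moment:it:large s} and~\ref{lem:estimates_RW_for_second_moment:it:intermediate s} has a real gap.

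The problem is the pair of claims that the post-$s$ barrier probability for each walk is $O(1/((t-s)^{3/2}+1))$ and that one may simply replace $e^{\theta H^\ell_\ell}$ by its maximal value $e^{\theta(s-x_n)}$ (resp.\ $e^{\theta(s+K)}$). Neither step works. Applying Lemma~\ref{lem:upper_bound_RW_double_barrier} on $\intervalleentier{s}{t}$ to a walk starting at distance $a$ below the barrier and ending at the barrier gives a bound of order $(a+1)/((t-s)^{3/2}+1)$, not $1/((t-s)^{3/2}+1)$: the prefactor depends on the starting gap, which here is $-x_n - S^\ell_{s}$ (or $-x_n - H^\ell_\ell + s$) and is \emph{not} uniformly $O(1)$. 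After conditioning on $H^\ell$ up to time $i_s$, the two post-$s$ pieces therefore contribute a factor $(a+1)^2/((t-s)^3+1)$ with $a = -x_n - S^\ell_{s-1}$, and this must still be integrated against the barrier event on $\intervalleentier{0}{s-1}$. If you have already discarded the exponential via $e^{\theta H^\ell_\ell}\leq e^{\theta(s-x_n)}$, the remaining sum over $a$ of $(a+1)^2 \cdot (K+1)(a+1)/s^{3/2}$ diverges. The exponential $e^{\theta H^\ell_\ell} \approx e^{\theta(s-1)} e^{\theta S^\ell_{s-1}} = e^{\theta(s-1-x_n)} e^{-\theta a}$ is exactly what makes $(a+1)^3$ summable; throwing it away first is fatal.

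The paper handles this by keeping the exponential inside the expectation and invoking Lemma~\ref{lem:exponential_RW_double_barrier}, which bounds
\[
\Ec{ \1_{\{\text{barrier on } \intervalleentier{0}{s-1}\}}\, e^{\theta S_{s-1}}\, (-x_n - S_{s-1} + 1)^2 }
\leq C\, e^{-\theta x_n}\, \frac{K+1}{(s-1)^{3/2}}.
\]
This lemma is the missing ingredient in your sketch. Concretely: bound the $\overline{S}$ piece on $\intervalleentier{s}{t}$ by $C(-x_n-\overline{H}^\ell_{i_s}+s+1)/((t-s)^{3/2}+1)$, use $\overline{H}^\ell_{i_s}\geq H^\ell_\ell$, then apply Markov at $i_s$ to the $S^\ell$ walk and bound its $\intervalleentier{s}{t}$ piece similarly, collapse $H^\ell_\ell \geq H^\ell_{i_{s-1}}$ and $\E[e^{\theta(H^\ell_\ell - H^\ell_{i_{s-1}})}]\leq C$, and finally apply Lemma~\ref{lem:exponential_RW_double_barrier} on $\intervalleentier{0}{s-1}$. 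Part~\ref{lem:estimates_RW_for_second_moment:it:intermediate s} is the same with the second barrier relaxed from $-x_n$ to $K$ on the stretch $\intervalleentier{s}{(s+t)/2}$. Two minor points: for $\ell=1$ one has $H^\ell_\ell = H^\ell_1 = 0$, so $e^{\theta H^\ell_\ell}=1$ (not $e^\theta$); and the uniform control of the error terms $\Delta,\eta$ for the walk $\overline{S}$ with its \emph{random} rates $\tilde p^\ell_i$ needs a separate argument (this is Lemmas~\ref{lem:interactions}--\ref{lem:uniform error bounds} in the paper), not merely the observation that $\sum p_i^2<\infty$.
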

We now apply this lemma to conclude the proof of \eqref{eq:second_moment_Q_n}. 
We break the sum on the right-hand side of \eqref{eq:first_bound_second_moment_Q_n} into three terms $T_1 + T_2 + T_3$, where $T_1$ corresponds to the part where $\ell = 1$, $T_2$ to the part $2 \leq \ell \leq i_{\lceil 3t/4 \rceil - 1}$ and $T_3$ to the part $i_{\lceil 3t/4 \rceil - 1} < \ell \leq n$.
First note that, for any $k \geq 1$,
\begin{align*} 
	\P_{\theta}(I_n=k)
	= p_k q_k\cdot \prod_{i=k+1}^{n}\left(1-p_iq_i\right)
	\leq p_k q_k \leq p_k^2,
\end{align*}
recalling that $q_k = w_k/W_k \leq p_k$.
Start with $T_1$, which is the main term.
Since $Z_1 = 1$ and $p_1=1$, we get by Lemma~\ref{lem:estimates_RW_for_second_moment}\ref{lem:estimates_RW_for_second_moment:it:first term}
\begin{align} \label{eq:bound_T_1}
	T_1 
	\leq \left( (1+\varepsilon) \sqrt{\frac{2}{\pi}} 
		\frac{(K+ C N^{-c})}{\rho^- t^{3/2}} \right)^2,
\end{align}
as soon as $K \geq K_0$ and $n \geq n_0$.
We now deal with $T_2$: applying Lemma~\ref{lem:estimates_RW_for_second_moment}\ref{lem:estimates_RW_for_second_moment:it:intermediate s}, we get
\begin{align*} 
	T_2 
	& \leq \sum_{s=1}^{\lceil 3t/4 \rceil - 1}
	\sum_{\ell = i_{s-1}+1}^{i_s}
	\frac{p_\ell^2}{Z_\ell} 
	\cdot \frac{C e^{\theta (s+K)}}{t^3} \frac{(K +1)}{s^{3/2}} \\
	& \leq \frac{C e^{\theta K}}{t^3} (K+1) 
	\left(
	\sum_{\ell = N+1}^{i_1} p_\ell^2
	+
	\sum_{s=2}^{\lceil 3t/4 \rceil - 1}
	\frac{e^{\theta s}}{s^{3/2}}
	\left( \frac{N}{i_{s-1}} \right)^{\gamma(e^{\theta}-1)}
	\sum_{\ell = i_{s-1}+1}^{i_s}
	 p_\ell^2
	 \right),
\end{align*}
using \eqref{eq:bound_Z_n^(N)} when $s \geq 2$ and simply $Z_\ell \geq 1$ in the case $s=1$.
Then, using \eqref{eq:sum_p_i^(N)_square} for both terms
and applying \eqref{eq:estimate_i_k^(N)} to $i_{s-1}$ in the case $s \geq 2$ (bounding the $\kappa_N$ term by a constant independent of $N$), we get
\begin{align} 
	T_2 
	& \leq \frac{C e^{\theta K}}{t^3} (K+1) 
	\left(
	\frac{1}{N}
	+
 	\sum_{s=1}^{\lceil 3t/4 \rceil - 1}
 	\frac{e^{\theta s}}{s^{3/2}}
 	\cdot 
 	\left( e^{-s/\gamma e^{\theta}} \right)^{\gamma(e^{\theta}-1)}
 	\cdot 
 	\frac{e^{-s/\gamma e^{\theta}}}{N}
 	\right) 
 	\leq \frac{C e^{\theta K}}{t^3} \frac{(K+1)}{N}, \label{eq:bound_T_2}
\end{align}
using that $1+\gamma (e^\theta-1-\theta e^\theta) = 0$.
Finally, we deal with $T_3$: applying Lemma~\ref{lem:estimates_RW_for_second_moment}\ref{lem:estimates_RW_for_second_moment:it:large s}, we get
\begin{align*} 
	T_3 
	\leq \sum_{s=\lceil 3t/4 \rceil}^t
	\sum_{\ell = i_{s-1}+1}^{i_s}
	\frac{p_\ell^2}{Z_\ell} 
	\cdot \frac{C e^{\theta (s-x_n)}}{(t-s)^3+1} \frac{(K +1)}{t^{3/2}}
	\leq \frac{C e^{- \theta x_n}}{t^{3/2}} \frac{(K+1)}{N}
	\sum_{s=\lceil 3t/4 \rceil}^t
	\frac{1}{(t-s)^3+1},
\end{align*}
where in the second inequality we proceed as for $T_2$ (note that the sum w.r.t.\@ $\ell$ is identical).
Noting that the sum over $s$ is bounded by a constant and 
$e^{-\theta x_n} \leq C/(\log i_t)^{3/2} \leq C/t^{3/2}$ by \eqref{eq:estimate_i_k^(N)}, it follows that
\begin{align} \label{eq:bound_T_3}
	T_3
	\leq \frac{C}{t^3} \frac{(K+1)}{N}.
\end{align}
Combining \eqref{eq:bound_T_1}, \eqref{eq:bound_T_2} and \eqref{eq:bound_T_3}, we finally get
\begin{align*} \label{eq:final_bound_Q^2}
	\Ec{ Q_n^2 }
	& \leq \frac{Z_n^2}{t^3} \cdot 
	\left( 
	\left( (1+\varepsilon) \sqrt{\frac{2}{\pi}} 
			\frac{(K+ C N^{-c})}{\rho^-} \right)^2
	+  \frac{C(K+1)e^{\theta K}}{N}
	+  \frac{C(K+1)}{N}
	\right).
\end{align*}
This concludes the proof of \eqref{eq:second_moment_Q_n}.

\subsection{Applying the random walk estimates}
\label{subsection:applying_the_RW_estimates}

In this section, we prove Lemma~\ref{lem:estimates_RW_for_second_moment}. For this, we need the following lemma.
\begin{lemma} \label{lem:interactions}
There exist $C,c > 0$ such that, for any integers $t > s \geq 0$, $K\geq 0$, $\ell \geq 1$ and $N \geq K^{2}$, on the event $\{ \forall k \in \intervalleentier{0}{t}, H^\ell_{i_k} - k \leq K \}$, we have
\[
	\sum_{i = i_s+1}^{i_{s+1}} p_i \ind{H^\ell_i \neq H^\ell_{i-1}} 
	\leq C \cdot N^{-1/4} \cdot e^{-c s}.
\]
\end{lemma}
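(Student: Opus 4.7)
The plan is to combine a deterministic count of jumps of $H^\ell$ on the event with the second-moment control on the sequence $(p_i)$ via a Cauchy--Schwarz step. The key observation is that the indicator $\ind{H^\ell_i \neq H^\ell_{i-1}}$ picks out jumps of a non-decreasing Bernoulli walk, so the number of indices in $(i_s,i_{s+1}]$ where $H^\ell$ jumps is exactly $H^\ell_{i_{s+1}} - H^\ell_{i_s}$; on the given event this is bounded above by $(s+1+K) - 0 = s+1+K$, using only $H^\ell_{i_{s+1}} \leq s+1+K$ and the trivial lower bound $H^\ell_{i_s} \geq 0$.

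The next step is to apply Cauchy--Schwarz:
\[
\sum_{i=i_s+1}^{i_{s+1}} p_i \ind{H^\ell_i \neq H^\ell_{i-1}}
\leq
\Biggl(\sum_{i \geq i_s+1} p_i^2\Biggr)^{1/2}
\cdot
\Biggl(\sum_{i=i_s+1}^{i_{s+1}} \ind{H^\ell_i \neq H^\ell_{i-1}}\Biggr)^{1/2}.
\]
I would bound the first factor by invoking \eqref{eq:sum_p_i^(N)_square}, which gives $\sum_{i \geq i_s+1} p_i^2 \leq C/((i_s+1)\vee N)$, and then use \eqref{eq:estimate_i_k^(N)} (together with $i_s^{(N)} \geq N+1$ for $s \geq 1$ and $i_0^{(N)}\vee N = N$) to deduce $(i_s+1)\vee N \geq c\, N e^{s/(\gamma e^{\theta})}$; hence the first factor is at most $C N^{-1/2} e^{-s/(2\gamma e^{\theta})}$. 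The second factor, by Step~1, is at most $\sqrt{s+1+K} \leq \sqrt{s+1} + \sqrt{K}$.

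Finally, I would use the hypothesis $N \geq K^2$ to write $\sqrt{K} \leq N^{1/4}$, so that the product splits as
\[
C N^{-1/2} e^{-s/(2\gamma e^{\theta})} \cdot \bigl( \sqrt{s+1} + N^{1/4} \bigr)
= C \bigl( N^{-1/2} \sqrt{s+1} + N^{-1/4} \bigr) e^{-s/(2\gamma e^{\theta})}.
\]
Absorbing the polynomial factor $\sqrt{s+1}$ into a slightly slower exponential and using $N^{-1/2} \leq N^{-1/4}$ yields the desired bound $C N^{-1/4} e^{-cs}$ for some $c > 0$ (which can be taken, for instance, slightly smaller than $1/(2\gamma e^{\theta})$).

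The argument is essentially routine once the setup is in place; the only conceptually nontrivial point is that on the event the number of jumps in the window is large ($\lesssim K$) but bounded in a way compatible with the second-moment assumption \eqref{eq:assumption_2}, and that the combination $\sqrt{K/N}$ is exactly of the right order to give $N^{-1/4}$ under the constraint $N \geq K^2$. This is why the hypothesis $N \geq K^2$ is sharp for this strategy: it is precisely what allows the $K$-dependence coming from the jump count to be absorbed into the $N$-power coming from the second-moment estimate.
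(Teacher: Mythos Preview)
Your proof is correct and follows essentially the same approach as the paper: bound the number of jumps by $H^\ell_{i_{s+1}} \leq s+1+K$ on the event, apply Cauchy--Schwarz, use \eqref{eq:sum_p_i^(N)_square} and \eqref{eq:estimate_i_k^(N)} for the $\sum p_i^2$ factor, and then exploit $K \leq N^{1/2}$ to absorb the $K$-dependence into $N^{-1/4}$. The only cosmetic difference is that the paper bounds $\sqrt{s+1+K} \leq \sqrt{s+1+N^{1/2}}$ and factors out $N^{1/4}$, whereas you split $\sqrt{s+1+K} \leq \sqrt{s+1}+\sqrt{K}$; both arrive at the same conclusion.
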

\begin{proof}
We work on the event $\{ \forall k \in \intervalleentier{0}{t}, H^\ell_{i_k} - k \leq K \}$ so we have
		\[
		\sum_{i = i_s+1}^{i_{s+1}} \ind{H^\ell_i \neq H^\ell_{i-1}} 
		\leq H^\ell_{i_{s+1}} 
		\leq s+1 + K.
		\]
Using successively the Cauchy-Schwarz inequality, the last display and \eqref{eq:sum_p_i^(N)_square} we have
\begin{equation*}
	\sum_{i = i_s+1}^{i_{s+1}} p_i \ind{H^\ell_i \neq H^\ell_{i-1}}
	\leq \sqrt{\sum_{i = i_s+1}^{i_{s+1}} \ind{H^\ell_i \neq H^\ell_{i-1}}}\cdot\sqrt{\sum_{i = i_s+1}^{i_{s+1}} p_i^2} \leq \sqrt{s+1 + K} \cdot \frac{C}{\sqrt{i_s\vee N}}.
\end{equation*}
Thanks to \eqref{eq:estimate_i_k^(N)}, which holds for $s\geq 1$, we can write $N\vee i_s\geq C\cdot N\cdot \exp(cs)$, which holds for any $s\geq 0$, for some $c>0$. 
Using the condition that $K\leq N^{\frac12}$ we get
\begin{align*}
\sum_{i = i_s+1}^{i_{s+1}} p_i \ind{H^\ell_i \neq H^\ell_{i-1}} 
\leq C \cdot N^{-\frac14}\cdot \left(\frac{s+1+N^{\frac12}}{N^{\frac12}}\right)^{\frac12}\cdot \exp\left(-\frac{cs}{2}\right)
\leq C\cdot N^{-\frac14} \cdot e^{-cs},
\end{align*}
where, we recall, we allow the values of the constants $C,c>0$ to change along the computation. This finishes the proof of the lemma.
\end{proof}

In the proof of Lemma~\ref{lem:estimates_RW_for_second_moment}, we apply several times the results of Section~\ref{section:RW} to a variety of different random walks. 
All the results of Section~\ref{section:RW} depend on two sequences $(\mathbf r, \mathbf j)$ and in particular the error terms are expressed using the quantities introduced in \eqref{eq:def_delta}. 
In the following lemma, we provide bounds for those error terms that apply uniformly in all the cases that arise in the proof of Lemma~\ref{lem:estimates_RW_for_second_moment}.
\begin{lemma}\label{lem:uniform error bounds}
	There exist $C,c>0$ such that for any integers $t>s\geq 0,\ K\geq 0,\ \ell\geq 1$ being such that $i_s\geq \ell$, for $N\geq \sqrt{K}$, the following inequalities jointly hold for the quantities below defined in \eqref{eq:def_delta} for a family of $(\mathbf{r},\mathbf{j})$ that depends on $s, N,K$, which we describe below 
	\begin{align*}
	\delta_k^{(\mathbf{r},\mathbf{j})}&\leq C\cdot N^{-c}\cdot e^{-c(k+s)}, \quad \text{for all }  1\leq k \leq t-s, \\ \Delta_{t-s}^{(\mathbf{r},\mathbf{j})}&\leq C\cdot N^{-c}\cdot e^{-cs},\\
	\eta_{t-s}^{(\mathbf{r},\mathbf{j})}&\leq C\cdot N^{-c}\cdot e^{-ct}.
	\end{align*}
	The inequalities above hold jointly for $\mathbf{j}=(i_{k+s}-i_s+1)_{k\geq 0}$, which implicitly depends on $N$, and $\mathbf{r}= (p_{i_s-1+i})_{i\geq 2}$ or $ (p^\ell_{i_s-1+i})_{i\geq 2}$ or any realisation of $(\tilde{p}^\ell_{i_s-1+i})_{i\geq 2}$ on the event $\{ \forall k \in \intervalleentier{0}{t}, H^\ell_{i_k} - k \leq K \}$.
\end{lemma}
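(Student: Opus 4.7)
My plan is to reduce all three choices of $\mathbf r$ to a single worst-case calculation. First, each candidate sequence is pointwise dominated by $(p_{i_s-1+i})_{i \geq 2}$: since $i_s \geq \ell$ by assumption, every relevant index satisfies $i_s - 1 + i \geq i_s + 1 > \ell$, so from \eqref{eq:definition H^l} one has $p^\ell_j = p_j(1-q_j)/(1-p_jq_j) \leq p_j$ for every such $j$, while $\tilde p^\ell_j = p_j \ind{H^\ell_j = H^\ell_{j-1}} \leq p_j$ holds deterministically (the event $\{\forall k,\ H^\ell_{i_k} - k \leq K\}$ is needed not here, but only in Lemma~\ref{lem:interactions} and downstream). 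Next, by inspection of the definitions of $\delta_k^{(\mathbf r, \mathbf j)}$, $\Delta_k^{(\mathbf r, \mathbf j)}$ and $\eta_k^{(\mathbf r, \mathbf j)}$ in \eqref{eq:def_delta}, each of these quantities is coordinate-wise non-decreasing in $\mathbf r$; hence it suffices to prove the three bounds for the specific choice $\mathbf r = (p_{i_s-1+i})_{i \geq 2}$.

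For this worst case, I would combine the three estimates already available: the pointwise bound $p_i \leq C i^{-\alpha}$ from \eqref{eq:bound_p_i^(N)}, the $\ell^2$ tail bound $\sum_{i \geq n} p_i^2 \leq C/(n \vee N)$ from \eqref{eq:sum_p_i^(N)_square}, and the lower bound $i_k \geq cN \exp(ck/(\gamma e^\theta))$ contained in \eqref{eq:estimate_i_k^(N)}. Together these yield $\sup_{i > i_{s+k-1}} p_i \leq C N^{-\alpha} e^{-c(s+k)}$ and $\sum_{i > i_s} p_i^2 \leq C N^{-1} e^{-cs}$. From here I expect the propagation to be mechanical: $\delta_k^{(\mathbf r, \mathbf j)}$, being essentially a supremum of $r_i$ over the $k$-th block $(i_{s+k-1}, i_{s+k}]$, inherits the bound $C N^{-c} e^{-c(s+k)}$ directly; $\Delta_{t-s}^{(\mathbf r, \mathbf j)}$, a variance/scale-type accumulation controlled by an $\ell^2$ tail, inherits $C N^{-c} e^{-cs}$; and $\eta_{t-s}^{(\mathbf r, \mathbf j)}$, after aggregating the per-step contributions up to time $t-s$, still carries a geometric factor $e^{-ct}$ together with the $N^{-c}$ prefactor.

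The only non-mechanical step is verifying the coordinate-wise monotonicity of $\delta_k$, $\Delta_k$, $\eta_k$ in $\mathbf r$, which requires unpacking the precise definitions from Section~\ref{section:RW}; I expect this to be transparent from the formulas there, or at worst to require an elementary coupling argument. Beyond that, the computation is a direct analogue of the one leading to \eqref{eq:bounds_delta_eta}, with two decorations: shifting the starting index from $1$ to $i_s$ produces the $e^{-cs}$ factors, and passing from the sequence $\boldsymbol w$ to $\boldsymbol w^{(N)}$ produces the $N^{-c}$ factors through the $N\exp(\cdot)$ lower bound on $i_k$ in \eqref{eq:estimate_i_k^(N)}. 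The condition $N \geq \sqrt K$ plays no role here; it intervenes only in Lemma~\ref{lem:interactions}.
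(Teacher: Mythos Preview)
Your central monotonicity claim is false, and the reduction collapses with it. From \eqref{eq:def_delta},
\[
\delta_k^{(\mathbf r,\mathbf j)} = \Bigl|\sum_{j=j_{k-1}+1}^{j_k} r_j - 1\Bigr|,
\]
so $\delta_k$ measures how far the block sum is from $1$, not the size of the $r_j$ themselves. The time change $(i_k)$ is constructed precisely so that $\sum_{i=i_{k+s-1}+1}^{i_{k+s}} p_i$ lies within $O(p_{i_{k+s}})$ of $1$; hence for $\mathbf r = (p_{i_s-1+i})$ the quantity $\delta_k$ is tiny. But for $\mathbf r = (\tilde p^\ell_{i_s-1+i})$, where $\tilde p^\ell_j = p_j \ind{H^\ell_j = H^\ell_{j-1}}$ may vanish, the block sum can drop well below $1$, making $\delta_k$ \emph{larger} after decreasing $\mathbf r$. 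The same non-monotonicity infects $\Delta_k$ (which is a running maximum of cumulative first-moment drifts, not an $\ell^2$-controlled variance quantity as you describe) and the $\sum \delta_\ell$ part of $\eta_k$. So the ``only non-mechanical step'' you flag is not merely unchecked---it is wrong.

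The paper does not attempt a worst-case reduction. It writes, by the triangle inequality,
\[
\delta_k^{(\mathbf r,\mathbf j)} \leq \Bigl|\sum_{i=i_{k+s-1}+1}^{i_{k+s}} p_i - 1\Bigr| + \sum_{i=i_{k+s-1}+1}^{i_{k+s}} \bigl|r_{i+1-i_s} - p_i\bigr|,
\]
and bounds the discrepancy term case by case. For $\mathbf r = (p^\ell_{i_s-1+i})$ one has $|p^\ell_i - p_i| \leq C p_i^2$, so \eqref{eq:sum_p_i^(N)_square} suffices. For $\mathbf r = (\tilde p^\ell_{i_s-1+i})$ the discrepancy is exactly $\sum_i p_i \ind{H^\ell_i \neq H^\ell_{i-1}}$, and this is where Lemma~\ref{lem:interactions} is invoked: its proof uses both the barrier event $\{\forall k,\ H^\ell_{i_k} - k \leq K\}$ (to bound the number of jumps in a block via Cauchy--Schwarz) and the size condition on $N$ relative to $K$ (to absorb the resulting factor $\sqrt{s+1+K}$). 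Contrary to your closing remarks, both hypotheses enter the proof of the present lemma directly, not only downstream.
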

\begin{proof} Let $s\geq 0$ and $(\mathbf{r},\mathbf{j})$ as in the lemma. For any $1\leq k \leq t-s$, we can write 
\begin{align*}
\delta_k^{(\mathbf{r},\mathbf{j})} 
& = \abs{\Ec{Y_k^{(\mathbf{r},\mathbf{j})}}- 1}= \abs{\sum_{j=j_{k-1}+1}^{j_k} r_j -1} 
\leq  \abs{\sum_{i=i_{k+s-1}+1}^{i_{k+s}} p_i - 1} + \sum_{i=i_{k+s-1}+1}^{i_{k+s}} \abs{r_{i+1-i_s} - p_i}.
\end{align*}
The first term of the last display is bounded above by $(p_{i_{k+s-1}} \1_{k+s-1 \geq 1} + p_{i_{k+s}})$ which is smaller than $C N^{-\alpha} e^{-\alpha(k+s)}$ using \eqref{eq:bound_p_i^(N)}. Then, we consider the different choices of $\mathbf{r}$. 
\begin{itemize}
\item If 
$\mathbf{r}= (p_{i_s-1+i})_{i\geq 2}$, then the second sum is identically equal to $0$.
\item If $\mathbf{r}=(p^\ell_{i_s-1+i})_{i\geq 2}$, then recalling the definition \eqref{eq:definition H^l}, for $i>\ell$ we have
\[\abs{p^\ell_i - p_i} = \abs{\frac{p_i (1-q_i)}{1-p_iq_i}-p_i}= \abs{\frac{p_i q_i (1-p_i)}{1-p_i q_i}}\leq Cp_i^2,\] 
so that using \eqref{eq:sum_p_i^(N)_square} and \eqref{eq:estimate_i_k^(N)} allows us to bound the sum by $C\cdot N^{-1}\cdot  e^{-c(k+s)}$.
\item Last, if $\mathbf{r}$ is any realisation of $(\tilde{p}^\ell_{i_s-1+i})_{i\geq 2}$ on the event $\{ \forall k \in \intervalleentier{0}{t}, H^\ell_{i_k} - k \leq K \}$, recalling that
$\tilde{p}^\ell_i = p_i \ind{H^\ell_i=H^\ell_{i-1}}$ we have 
\[\sum_{i=i_{k+s-1}+1}^{i_{k+s}} \abs{r_{i+1-i_s} - p_i}= \sum_{i=i_{k+s-1}+1}^{i_{k+s}} p_i \ind{H^\ell_i \neq H^\ell_{i-1}} 
\leq C N^{-1/4} e^{-c(k+s)},\]
using Lemma~\ref{lem:interactions}.
\end{itemize}
In the end, by tuning the constants $C,c>0$, we have that in any case, for all $1\leq k \leq t-s$,
\begin{align*}
	\delta_k^{(\mathbf{r},\mathbf{j})}&\leq C\cdot N^{-c} \cdot e^{-cs}.
\end{align*}
From there, it is easy to get that
\begin{align*}
	\Delta_{t-s}^{(\mathbf{r},\mathbf{j})} 
& \leq \sum_{k=1}^{t-s} \delta_k^{(\mathbf{r},\mathbf{j})}\leq  \sum_{k=1}^{t-s} C\cdot N^{-c}\cdot e^{-c(k+s)}
\leq C\cdot N^{-c} \cdot e^{-cs}.
\end{align*}
Then, for any of our choices of $\mathbf{r}$, we have $r_i\leq p_{i_s-1+i}$ for all $i\geq 2$. This allows us to write
\begin{align*}
\eta_{t-s}^{(\mathbf{r},\mathbf{j})}&\leq 2 \left( \sum_{j = i_{\lfloor (t-s)^{1/4} \rfloor+s}+1}^{i_t} p_j^2 + \sum_{j=\lfloor (t-s)^{1/4} \rfloor}^{t-s} \delta_j^{(\mathbf{r},\mathbf{j})} \right)\\
&\leq \frac{C}{i_{\lfloor (t-s)^{1/4} \rfloor+s}} 
+  C N^{-c} e^{-c(\lfloor (t-s)^{1/4} \rfloor+s)}
\leq C\cdot N^{-c}\cdot e^{-ct},
\end{align*}
where we use \eqref{eq:sum_p_i^(N)_square}, \eqref{eq:estimate_i_k^(N)} and our previous estimate on $\delta_j^{(\mathbf{r},\mathbf{j})}$ for $1\leq j \leq t-s$. This finishes the proof of the lemma.
\end{proof}
\begin{proof}[Proof of Lemma~\ref{lem:estimates_RW_for_second_moment}]
We set $E(\ell) \coloneqq \E [ e^{\theta H^\ell_\ell} 
F(H^\ell_1,\dots,H^\ell_n) 
F(\overline{H}_1^\ell, \dots, \overline{H}^\ell_n)]$. 

\textbf{Part~\ref{lem:estimates_RW_for_second_moment:it:large s}}. We consider the case $s \geq 3t/4$. 
Recalling that $\overline{H}_j^\ell = H_j^\ell$ for $j \leq \ell$, we have
\begin{align*}
	E(\ell) 
	& = \Ec{e^{\theta H^\ell_\ell} 
	F(H^\ell_1,\dots,H^\ell_n) \cdot
	\Ppsq{\overline{H}_{i_t}^\ell = t-x_n,\
	\max_{k \in \intervalleentier{s}{t}} \overline{H}_{i_k}^\ell - k \leq -x_n}
	{H^\ell, \overline{H}_{i_s}^\ell}}.
\end{align*}
This last conditional probability is equal to
\begin{align*}
	\Ppsq{\overline{S}_{t-s} = -x_n - \overline{H}_{i_s}^\ell + s,\
	\max_{k \in \intervalleentier{0}{t-s}} \overline{S}_k \leq -x_n-\overline{H}_{i_s}^\ell+s}
	{H^\ell, \overline{H}_{i_s}^\ell},
\end{align*}
where $\overline{S}_k \coloneqq (\sum_{j=1}^k \overline{Y}_{\!\!j}) -k$
with $\overline{Y}_{\!\!j} \coloneqq \sum_{i=i_{j+s-1}+1}^{i_{j+s}} \1_{V_i \leq \tilde{p}^\ell_i}$, recalling that
$\tilde{p}^\ell_i = p_i \ind{H^\ell_i=H^\ell_{i-1}}$ and the $V_i$ are i.i.d.\@ uniformly distributed over $(0,1)$ and independent of $H^\ell$ and $\overline{H}_{i_s}^\ell$.
The distribution of $\overline{S}$ then corresponds to that of $S^{(\mathbf r, \mathbf j)}$, for $\mathbf r=(\tilde{p}^\ell_{i-1+i_s})_{i\geq 2}$ and $\mathbf j=(i_{k+s}-i_s+1)_{k\geq 0}$, in the setting of Section~\ref{section:RW}.
Applying Lemma~\ref{lem:upper_bound_RW_double_barrier} with $K=0$, $a=-x_n - \overline{H}_{i_s}^\ell + s$ and $n=t-s$ to bound this probability, we get that the above display is smaller than 
\begin{align*}
C \left(\Delta_{\lfloor n^{1/4} \rfloor}^{(\mathbf r,\mathbf j)} +1 \right) \frac{(-x_n - \overline{H}_{i_s}^\ell + s+1)}{(t-s)^{3/2}+1}
+ \eta_{t-s}^{(\mathbf r,\mathbf j)}\leq \frac{C (-x_n - \overline{H}_{i_s}^\ell + s+1)}{(t-s)^{3/2}+1},
\end{align*}
where the inequality is due to Lemma~\ref{lem:uniform error bounds}. 
Hence, we have
\begin{align*}
	E(\ell) 
	& \leq \Ec{e^{\theta H^\ell_\ell} 
	F(H^\ell_1,\dots,H^\ell_n)
	\frac{C (-x_n-\overline{H}_{i_s}^\ell+s +1)}{(t-s)^{3/2}+1}} \\
	&\leq \Ec{e^{\theta H^\ell_\ell} 
	F(H^\ell_1,\dots,H^\ell_n) 
	\frac{C (-x_n-H_\ell^\ell+s +1)}{(t-s)^{3/2}+1}},
\end{align*}
where we used that $\overline{H}_{i_s}^\ell \geq H_\ell^\ell$.
Then, setting for brevity $B \coloneqq \{ \max_{k \in \intervalleentier{0}{t/2}} H_{i_k}^\ell - k \leq K,\ \max_{k \in \intervalleentier{t/2}{s-1}} H_{i_k}^\ell - k \leq -x_n \}$, we have
\begin{align*}
	E(\ell) 
	\leq \Ec{ e^{\theta H^\ell_\ell} \1_B \cdot
	\frac{C (-x_n-H_\ell^\ell+s+1)}{(t-s)^{3/2}+1} 
	\cdot
	\Ppsq{
	\max_{k \in \intervalleentier{s}{t}} H_{i_k}^\ell - k \leq -x_n = H_{i_t}^\ell - t}
	{H^\ell_{i_s}}
	}.
\end{align*}
We apply Lemma~\ref{lem:upper_bound_RW_double_barrier} again to bound the conditional probability appearing in the last display.
In that case the considered random walk is 
$\widetilde{S}_k \coloneqq (\sum_{j=1}^k \widetilde{Y}_j) -k$
with $\widetilde{Y}_j \coloneqq \sum_{i=i_{j+s-1}+1}^{i_{j+s}} \1_{U_i \leq p^\ell_i}$, recalling that
$p^\ell_i = p_i (1-q_i)/(1-p_iq_i)$. 
The distribution of $\widetilde{S}$ then corresponds to that of $S^{(\mathbf r, \mathbf j)}$, for $\mathbf r=(p^\ell_{i-1+i_s})_{i\geq 2}$ and $\mathbf j=(i_{k+s}-i_s+1)_{k\geq 0}$, and the conditional probability above can be written as
\begin{align*}
\Ppsq{\widetilde{S}_{t-s} = -x_n - H_{i_s}^\ell + s,\
	\max_{k \in \intervalleentier{0}{t-s}} \widetilde{S}_k \leq -x_n-H_{i_s}^\ell+s}
{H_{i_s}^\ell}.
\end{align*}
Applying again Lemma~\ref{lem:upper_bound_RW_double_barrier} and Lemma~\ref{lem:uniform error bounds} as before, it follows that
\begin{align*}
	E(\ell)
	& \leq \Ec{e^{\theta H^\ell_\ell} 
	\1_B \cdot 
	\frac{C(-x_n-H_\ell^\ell+s +1)}{(t-s)^{3/2}+1} \cdot 
	\frac{(-x_n-H_{i_s}^\ell+s +1)}{(t-s)^{3/2}+1} } \\
	& \leq \Ec{ e^{\theta H^\ell_{i_{s-1}}} 
	\1_B \cdot
	\frac{(-x_n-H_{i_{s-1}}^\ell+s +1)^2}{(t-s)^3+1} },
\end{align*}
using that $H_{i_s}^\ell \geq H_\ell^\ell \geq H_{i_{s-1}}^\ell$ and that
$\E \bigl[ e^{\theta (H^\ell_\ell-H^\ell_{i_{s-1}})} \bigr] \leq C$.
Finally, we apply Lemma~\ref{lem:exponential_RW_double_barrier} to the random walk $S_k \coloneqq (\sum_{j=1}^k Y_j) -k$
with $Y_j \coloneqq \sum_{i=i_{j-1}+1}^{i_j} \1_{U_i \leq p_i}$, and we get
\begin{align*}
	E(\ell)
	& \leq \frac{C}{(t-s)^3+1}
	e^{\theta (s-1-x_n)} \frac{(K +1)}{(s-1)^{3/2}}.
\end{align*}
This concludes the proof of Part~\ref{lem:estimates_RW_for_second_moment:it:large s}.

\textbf{Part~\ref{lem:estimates_RW_for_second_moment:it:intermediate s}}.
First note that $E(\ell)$ is smaller than
\begin{align*}
	\E \Bigl[ e^{\theta H^\ell_\ell} 
	F(H^\ell_1,\dots,H^\ell_n) 
	\cdot
	\ind{\overline{H}_{i_t}^\ell = t-x_n,\ 
	\max_{k \in \intervalleentier{s}{(s+t)/2}} \overline{H}_{i_k}^\ell - k \leq K,\
	\max_{k \in \intervalleentier{(s+t)/2}{t}} \overline{H}_{i_k}^\ell - k \leq -x_n}
	\Bigr],
\end{align*}
where we replaced the barrier at $-x_n$ by a barrier at $K$ at some points.
We integrate w.r.t.\@ the random walk $\overline{S}$ as before, it follows from Lemma~\ref{lem:uniform error bounds} and  Lemma~\ref{lem:upper_bound_RW_double_barrier} that
\begin{align*}
	E(\ell) 
	& \leq \Ec{e^{\theta H^\ell_\ell} 
	F(H^\ell_1,\dots,H^\ell_n) \cdot
	\frac{C (K-H_\ell^\ell+s +1)}{(t-s)^{3/2}+1}}.
\end{align*}
Then, we integrate w.r.t.\@ the random walk $S$ similarly and get, by  Lemma~\ref{lem:uniform error bounds} and  Lemma~\ref{lem:upper_bound_RW_double_barrier},
\begin{align*}
	E(\ell) 
	& \leq \Ec{ e^{\theta H^\ell_{i_{s-1}}} 
		\ind{\max_{k \in \intervalleentier{0}{s}} H_{i_k}^\ell - k \leq K}
		\frac{C(K-H_{i_{s-1}}^\ell+s +1)^2}{(t-s)^3+1} }.
\end{align*}
Finally, applying Lemma~\ref{lem:exponential_RW_double_barrier}, we get the announced result.

\textbf{Part~\ref{lem:estimates_RW_for_second_moment:it:first term}}. We are in the case $\ell = 1$, so $H^\ell_\ell = 0$ and $E(\ell)$ equals
\begin{align*}
	\Ec{F(H^\ell_1,\dots,H^\ell_n)
	\Ppsq{\overline{H}_{i_t}^\ell = t-x_n,
	\max_{k \in \intervalleentier{0}{t/2}} \overline{H}_{i_k}^\ell - k \leq K,
	\max_{k \in \intervalleentier{t/2}{t}} \overline{H}_{i_k}^\ell - k \leq -x_n}
	{H^\ell}}.
\end{align*}
Then, we apply Lemma~\ref{lem:estimate_RW_double_barrier} to the random walk $\overline{S}$, noting that $R^-(0) = 1$, to get
\begin{align*}
	E(\ell)
	\leq (1+\varepsilon) \sqrt{\frac{2}{\pi}} 
		\frac{(K+ C N^{-c})}{\rho^- t^{3/2}}
	\Ec{F(H^\ell_1,\dots,H^\ell_n)}.
\end{align*}
Applying Lemma~\ref{lem:estimate_RW_double_barrier} to the random walk $S$, the result follows.
\end{proof}

\section{Diameter of the tree}
\label{section:diameter}

\begin{proof}[Proof of Theorem~\ref{thm:diameter_WRT}]
First note that we have $\diam(\ttT_n) \leq 2 \haut(\ttT_n)$ so the upper bound follows directly from Theorem~\ref{thm:height_WRT}.
Now we fix some $\varepsilon >0$ and we want to prove that there exists $b \in \R$ such that
\begin{align*}
	\limsup_{n\to\infty}
	\Pp{\diam(\ttT_n) \geq 2 \gamma e^{\theta} \log n - \frac{3}{\theta} \log \log n - b} 
	\geq 1-\varepsilon.
\end{align*}
For this, we use notation and results from Section~\ref{section:lower_bound}.
By an argument similar to the proof of the lower bound in Theorem~\ref{thm:height_WRT}, it is enough to prove that for $N$ large enough
\begin{align*}
	\limsup_{t\to\infty} \Pp{\diam(\ttT_n^{(N)}) \geq 2t - 2x_n}
	\geq 1-\varepsilon,
\end{align*}
where $n \coloneqq i_t^{(N)}$ and $\ttT_n^{(N)}$ has distribution $\wrt(\boldsymbol{w}^{(N)})$, where the sequence of weights $\boldsymbol w^{(N)}$ is defined in \eqref{eq:def_w^(N)}.
In the rest of this proof, we work only with the tree $\ttT_n^{(N)}$ for some fixed $N$ that is chosen large enough depending on $\varepsilon$ afterwards.
Therefore, from now on, we omit the dependence in $N$ in the notation of the various quantities we are considering (including $w_m^{(N)}$).
Recall that, for some $K \geq 0$, we consider
\begin{align*}
	Q_n
	& \coloneqq \sum_{m=1}^{n} \frac{w_m}{W_n} 
	e^{\theta \haut(\ttu_m)} 
	\1_{B_m}, \\
 	B_m 
 	& \coloneqq
	\left\{ \haut(\ttu_m) = t - x_n,\ 
	\max_{k \in \intervalleentier{0}{t/2}} \haut(\ttu_m(i_k)) - k \leq K,\ 
	\max_{k \in \intervalleentier{t/2}{t}} \haut(\ttu_m(i_k)) - k \leq -x_n \right\}.
\end{align*}
Observe that if there are two vertices in $\ttT_n^{(N)}$ at height $t - x_n$ whose most recent common ancestor is the root, then the diameter of $\ttT_n^{(N)}$ is at least $2t - 2x_n$.
Hence, recalling that $\ttu \wedge \ttv$ denotes the most recent common ancestor of vertices $\ttu$ and $\ttv$, we have
\begin{align*}
	\Pp{\diam(\ttT_n^{(N)}) \geq 2t - 2x_n}
	& \geq \Pp{ \exists \ttu,\ttv \in \ttT_n^{(N)}: 
	\haut(\ttu) = \haut(\ttv) = t - x_n,\
	\ttu \wedge \ttv = \ttu_1} \\
	&\geq \Pp{\cT_1 > 0},
\end{align*}
where we set
\begin{align*}
	\cT_1
	& \coloneqq \sum_{\ell,m=1}^{n} 
	\frac{w_\ell w_m}{(W_n)^2} 
	e^{\theta \haut(\ttu_\ell)} e^{\theta \haut(\ttu_m)} 
	\1_{B_\ell} \1_{B_m} 
	\ind{\mathrm{lab}(\ttu_\ell \wedge \ttu_m) = 1}.
\end{align*}
Note that $\cT_1$ is a part of the sum obtained when developing $Q_n^2$ and the remaining part satisfies
\begin{align*}
	\Ec{Q_n^2- \cT_1}
	& = \Ec{\sum_{\ell,m=1}^{n} 
		\frac{w_\ell w_m}{(W_n)^2} 
		e^{\theta \haut(\ttu_\ell)} e^{\theta \haut(\ttu_m)} 
		\1_{B_\ell} \1_{B_m} 
		\ind{2 \leq \mathrm{lab}(\ttu_\ell \wedge \ttu_m) \leq n} }
	\leq T_2 + T_3,
\end{align*}
by the many-to-two lemma (Lemma~\ref{lem:many-to-two}),
where $T_2$ and $T_3$ were defined in Section~\ref{subsection:second_moment_Q} as parts of the sum on the right-hand of \eqref{eq:first_bound_second_moment_Q_n} corresponding to $2 \leq \ell \leq i_{\lceil 3t/4 \rceil - 1}$ and $i_{\lceil 3t/4 \rceil - 1} < \ell \leq n$ respectively.
Then, we proved in \eqref{eq:bound_T_2} and \eqref{eq:bound_T_3} that
\begin{align*}
	T_2 + T_3 
	\leq \frac{Z_n^2}{t^3} \cdot \frac{C(K+1)e^{\theta K}}{N}
	\leq \varepsilon \Ec{Q_n}^2,
\end{align*}
where the second inequality follows from \eqref{eq:first_moment_Q_n} for $K,N,n$ large enough depending on $\varepsilon$ only.
Therefore, we get
\begin{align*}
	\Pp{\cT_1 > 0}
	& \geq \Pp{Q_n > \frac{1}{2} \Ec{Q_n},\ 
	Q_n^2- \cT_1 < \frac{1}{4} \Ec{Q_n}^2} \\
	& \geq 1 
	- \Pp{Q_n \leq \frac{1}{2} \Ec{Q_n} }
	- \Pp{Q_n^2 - \cT_1 \geq \frac{1}{4} \Ec{Q_n}^2} \\
	& \geq 1
	- \frac{4 \Var \left(Q_n\right)}{\Ec{Q_n}^2}
	- 4 \varepsilon,
\end{align*}
applying Chebyshev and Markov inequalities.
By Lemma~\ref{lem:first_and_second_moment}, we have $\Var(Q_n) \leq \varepsilon \E[Q_n]^2$ for $K,N,n$ large enough depending on $\varepsilon$ only.
Hence, we proved that for $K,N,n$ large enough,
\begin{align*}
	\Pp{\diam(\ttT_n^{(N)}) \geq 2t - 2x_n}
	\geq \Pp{\cT_1 > 0}
	\geq 1 - 6 \varepsilon,
\end{align*}
which concludes the proof.
\end{proof}

\begin{appendix}
\section{Random walk estimates}
\label{section:RW}

The goal of this section is to prove estimates for the probability of events involving a certain inhomogeneous random walk $(S_k)$.
We work in the following framework: let $\mathbf{r}=(r_i)_{i\geq 2}$ be a sequence of real numbers in the interval $\intervalleff{0}{1}$.
Then, let $\mathbf{j}=(j_k)_{k \geq 0}$ be an increasing sequence of integers with $j_0 = 1$. 
We introduce the following processes that depend on $\mathbf{r}$ and~$\mathbf{j}$
\begin{align*}
	Y_k^{(\mathbf{r},\mathbf{j})} & \coloneqq \sum_{j=j_{k-1}+1}^{j_k} \ind{U_j \leq r_j}, 
	\qquad \text{for } k \geq 1, \\
	S_k^{(\mathbf{r},\mathbf{j})} & \coloneqq \left( \sum_{\ell=1}^k Y_\ell^{(\mathbf{r},\mathbf{j})} \right) - k,
	\qquad \text{for } k \geq 0,
\end{align*}
where $(U_j)_{j\geq2}$ is a sequence of i.i.d.\@ uniform random variable over $(0,1)$.
Finally, we define
\begin{align}
\begin{split} \label{eq:def_delta}
	& \delta_k^{(\mathbf{r},\mathbf{j})} \coloneqq \abs{\Ec{Y_k^{(\mathbf{r},\mathbf{j})}}-1},
	\qquad 
	\Delta_k^{(\mathbf{r},\mathbf{j})}
	\coloneqq \max_{0 \leq \ell \leq k} \abs{\Ec{S_\ell^{(\mathbf{r},\mathbf{j})}}}, \\
	& \eta_k^{(\mathbf{r},\mathbf{j})} \coloneqq 2 \left( \sum_{j = j_{\lfloor k^{1/4} \rfloor}+1}^{j_k} r_j^2 \right) + 2 \left( \sum_{\ell=\lfloor k^{1/4} \rfloor}^k \delta_\ell^{(\mathbf{r},\mathbf{j})} \right).
\end{split}
\end{align}
which are non-negative numbers appearing in error terms.
Throughout the paper, we make use of the estimates proved in this section for several choices of $(\mathbf{r},\mathbf{j})$. 
In particular, for a fixed $(\mathbf{r},\mathbf{j})$, it is useful to apply the results for the walk $(S_{k+s}^{(\mathbf{r},\mathbf{j})}-S_{s}^{(\mathbf{r},\mathbf{j})})_{k\geq 0}$ which has the same distribution as $(S_k^{(\mathbf{r}',\mathbf{j}')})_{k\geq 0}$ where $\mathbf{r}'=(r_{j_s+i-1})_{i\geq 2}$ and $\mathbf{j}'=(j_{s+k}-j_s+1)_{k\geq 0}$. 
In this section, we are going to make the dependency in  $(\mathbf{r},\mathbf{j})$ implicit because those sequences are chosen in different ways throughout the paper.

\subsection{A coupling with an homogeneous random walk}

The goal of this section is to prove the following lemma, which allows us to apply known results on homogeneous random walks.
\begin{lemma} \label{lem:comparison_S_Stilde}
For any $m \geq 0$, there exists a random walk $\widehat{S}$ with jump distribution $\mathrm{Poisson}(1) - 1$ such that
\[
	\Pp{ \exists k \in \llbracket 0,n-m \rrbracket : S_{k+m}-S_m \neq \widehat{S}_k }
	\leq 2 \left( \sum_{j = j_m+1}^{j_n} r_j^2 \right)
	+ 2 \left( \sum_{k=m}^n \delta_k \right).
\]
\end{lemma}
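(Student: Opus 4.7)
The plan is to construct the coupling block-by-block. Setting $Y_k := \sum_{j=j_{k-1}+1}^{j_k} \1_{U_j \leq r_j}$, so that $\Ec{Y_k} = \sum_{j=j_{k-1}+1}^{j_k} r_j$ and $\abs{\Ec{Y_k}-1} = \delta_k$, the increments of $S$ from time $m$ onwards are $Y_{m+1}-1, Y_{m+2}-1, \dots$ I will couple each $Y_{m+\ell}$ with a $\mathrm{Poisson}(1)$ variable $\widehat{Y}_\ell$, independently across $\ell$, so that $\widehat{S}_k := \sum_{\ell=1}^k (\widehat{Y}_\ell - 1)$ is a genuine $\mathrm{Poisson}(1)-1$ random walk, and the two walks coincide on the good event $\{ Y_{m+\ell} = \widehat{Y}_\ell,\ \forall\, 1 \leq \ell \leq n-m \}$. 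A union bound over the $n-m$ blocks then reduces everything to bounding, for each block, the probability that a single Bernoulli sum differs from a $\mathrm{Poisson}(1)$ variable.

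The per-block coupling is built in two classical steps. First, for each individual Bernoulli $\1_{U_j \leq r_j}$, there exists the standard Le Cam coupling with a variable $Z_j \sim \mathrm{Poisson}(r_j)$ such that $\Pp{\1_{U_j \leq r_j} \neq Z_j} = r_j(1-e^{-r_j}) \leq r_j^2$; taking these couplings independent across $j$ and setting $\widetilde{Y}_k := \sum_{j=j_{k-1}+1}^{j_k} Z_j$ gives, by Poisson additivity and a union bound, a variable $\widetilde{Y}_k \sim \mathrm{Poisson}(\Ec{Y_k})$ with
\[
\Pp{Y_k \neq \widetilde{Y}_k} \leq \sum_{j=j_{k-1}+1}^{j_k} r_j^2.
\]
Second, I couple $\widetilde{Y}_k$ with $\widehat{Y}_k \sim \mathrm{Poisson}(1)$ via the usual splitting: if $\Ec{Y_k} \leq 1$, set $\widehat{Y}_k := \widetilde{Y}_k + W_k$ with an independent $W_k \sim \mathrm{Poisson}(1-\Ec{Y_k})$, and reverse the roles of $\widetilde{Y}_k$ and $\widehat{Y}_k$ otherwise. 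The disagreement probability is $1-e^{-\delta_k} \leq \delta_k$. Doing this independently across $k$ makes the $(\widehat{Y}_\ell)_{\ell \geq 1}$ i.i.d.\ $\mathrm{Poisson}(1)$, as required.

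Combining the two steps and applying the union bound over blocks yields
\[
\Pp{\exists k \in \llbracket 0, n-m \rrbracket : S_{k+m}-S_m \neq \widehat{S}_k}
\leq \sum_{\ell=1}^{n-m} \Bigl( \Pp{Y_{m+\ell} \neq \widetilde{Y}_{m+\ell}} + \Pp{\widetilde{Y}_{m+\ell} \neq \widehat{Y}_\ell} \Bigr)
\leq \sum_{j=j_m+1}^{j_n} r_j^2 + \sum_{k=m+1}^{n} \delta_k,
\]
which is bounded by the quantity stated in the lemma (the factor of $2$ is in fact loose). There is no real obstacle here: both ingredients are entirely standard probabilistic couplings, and the only minor subtlety is to make the constructions mutually independent across $k$ so that the resulting process $\widehat{S}$ is a genuine homogeneous random walk rather than merely a sequence with the correct marginals.
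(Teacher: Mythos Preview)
Your proof is correct and follows essentially the same route as the paper: both couple each block $Y_k$ with a $\mathrm{Poisson}(1)$ variable via an intermediate $\mathrm{Poisson}(\E[Y_k])$ (Le Cam for the first step, Poisson-to-Poisson for the second), then apply a union bound over blocks. The only difference is cosmetic: the paper packages the per-block step as a separate lemma quoting total-variation bounds from \cite{lecam1960} and \cite{adelljodra2006}, whereas you build the couplings explicitly and thereby observe that the factor~$2$ is not actually needed.
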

It is proved easily by replacing each $Y_\ell$ by a $\mathrm{Poisson}(1)$ r.v.\@ using the following lemma.
\begin{lemma} \label{lem:comparison_Y_Poisson}
Let $q_1,\dots,q_n$ be non-negative real number, $V_1,\dots,V_n$ be independent r.v.\@ uniformly distributed over $(0,1)$ and $Y \coloneqq \sum_{i=1}^n \ind{V_i \leq q_i}$.
There exists a r.v.\@ $Z$ with distribution $\mathrm{Poisson}(1)$ such that 
\[
	\Pp{Y \neq Z}
	\leq 2 \left( \sum_{i=1}^n q_j^2 \right)
	+ 2 \abs{\E[Y] - 1}.
\]
\end{lemma}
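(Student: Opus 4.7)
Set $\lambda \coloneqq \E[Y] = \sum_{i=1}^n q_i$. The plan is to build the desired coupling in two stages: first couple $Y$ with a Poisson random variable $Z'$ whose parameter is the ``correct'' mean $\lambda$, then adjust $Z'$ to a Poisson$(1)$ variable $Z$. The error of the first stage will be bounded by $\sum_i q_i^2$, the error of the second by $\abs{\lambda-1}$, and the triangle inequality combines them.

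\emph{Stage 1 (Le Cam coupling).} For each $i$, $X_i \coloneqq \ind{V_i \leq q_i}$ is $\mathrm{Bern}(q_i)$. The maximal coupling of $\mathrm{Bern}(q_i)$ with $\mathrm{Poisson}(q_i)$ (possible because $1-q_i \leq e^{-q_i}$ and $q_i e^{-q_i} \leq q_i$ on $[0,1]$) produces, on an enlargement of the probability space, a $\mathrm{Poisson}(q_i)$ variable $Z_i'$ with $\Pp{X_i \neq Z_i'} = q_i(1-e^{-q_i}) \leq q_i^2$. Performing these couplings independently across $i$, the sums $Y = \sum_i X_i$ and $Z' \coloneqq \sum_i Z_i' \sim \mathrm{Poisson}(\lambda)$ satisfy, by the union bound, $\Pp{Y \neq Z'} \leq \sum_{i=1}^n q_i^2$.

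\emph{Stage 2 (adjusting the Poisson parameter to $1$).} I distinguish two cases. If $\lambda \geq 1$, use Poisson thinning: conditionally on $Z'$, sample $Z \sim \mathrm{Bin}(Z', 1/\lambda)$, so that $Z \sim \mathrm{Poisson}(1)$, $Z \leq Z'$, and $\Pp{Z \neq Z'} \leq \E[Z'-Z] = \lambda - 1$. If $\lambda < 1$, use Poisson superposition: set $Z \coloneqq Z' + W$ with $W \sim \mathrm{Poisson}(1-\lambda)$ independent of $Z'$, so that $Z \sim \mathrm{Poisson}(1)$ and $\Pp{Z \neq Z'} = \Pp{W \geq 1} \leq 1 - \lambda$. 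Either way, $\Pp{Z \neq Z'} \leq \abs{\lambda - 1} = \abs{\E[Y]-1}$.

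Combining the two stages via the triangle inequality yields the (in fact stronger) estimate
\[
\Pp{Y \neq Z} \leq \sum_{i=1}^n q_i^2 + \abs{\E[Y]-1},
\]
which implies the announced bound with room to spare. There is no genuine obstacle: the two ingredients are the classical single-variable Bernoulli/Poisson coupling and the elementary thinning/superposition properties of Poisson laws. The slack factors of $2$ in the statement simply make the bound robust against any slightly coarser explicit construction of the Bernoulli/Poisson coupling.
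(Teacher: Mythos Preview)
Your proof is correct and follows the same two-stage strategy as the paper: first approximate $Y$ by $\mathrm{Poisson}(\lambda)$ with $\lambda=\E[Y]$, then adjust the parameter to $1$. The only difference is presentational: the paper cites the total variation bounds directly (Le Cam \cite{lecam1960} for the first stage and Adell--Jodr\'a \cite{adelljodra2006} for the second) and invokes the coupling characterisation of total variation, whereas you build both couplings explicitly (maximal per-coordinate coupling, then thinning/superposition). Your version is more self-contained and indeed yields the sharper constant $1$ instead of $2$.
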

\begin{proof} On the one hand, it follows from \cite[Proposition~1]{lecam1960} that the total variation distance between the distribution of $Y$ and the distribution $\mathrm{Poisson}(\E[Y])$ is at most $\sum_{i=1}^n q_i^2$.
On the other hand, by \cite[Equation (2.2)]{adelljodra2006}, the total variation distance between $\mathrm{Poisson}(\E[Y])$ and  $\mathrm{Poisson}(1)$ is at most $\abs{\E[Y] - 1}$.
The result follows.
\end{proof}

\subsection{Known results on the homogeneous random walk}
\label{subsection:known_results_RW}

In this section, we state some known results concerning homogeneous random walks. We work in the particular case of the walk $\widehat{S}$, which jumps with distribution $\mathrm{Poisson}(1) - 1$. Hence we are in the so-called lattice case, because the walk $\widehat{S}$ can take only integer values.

We first introduce $\mathrm R$ the renewal function of the first strict ascending ladder height process of the random walk $\widehat{S}$.
For $x \geq 0$,
\begin{align*}
\mathrm R(x) 
\coloneqq
\sum_{k = 0}^\infty
\Pp{H_k \leq x},
\end{align*}
where $(H_k)_{k\in\N}$ is the first strict ascending ladder height process: we set $\tau_0 \coloneqq 0$, $H_0 \coloneqq 0$ and, for $k \geq 1$, $\tau_k \coloneqq \inf \{ n > \tau_{k-1} : \widehat{S}_n > \widehat{S}_{\tau_{k-1}} \}$ and $H_k \coloneqq \widehat{S}_{\tau_k}$.

Since $\E[\widehat{S}_1] = 0$ and $\E[(\widehat{S}_1)^2] < \infty$, by Feller \cite[Theorem~XVIII.5.1 (5.2)]{feller1971}, we have $\E[H_1] < \infty$.
Thus, it follows from Feller's \cite[p.\@ 360]{feller1971} renewal theorem that there exists a constant $\rho > 0$ such that
\begin{equation} \label{eq:equivalent_function_R}
	\frac{\mathrm R(x)}{x} 
	\xrightarrow[x \to \infty]{} 
	\rho.
\end{equation}
Moreover, we denote by $\mathrm R^-$ the renewal function of the first strict ascending ladder height process for the random walk with jump $1-\mathrm{Poisson}(1)$ and by $\rho^-$ the constant such that $\mathrm R^-(x) / x \to \rho^-$ as $x \to \infty$.

We now state a result which is a direct corollary of \cite[Proposition~2.8]{pain2018}.
Let $(\gamma_n)_{n\in \N}$ be a sequence of positive numbers such that $\gamma_n = o(\sqrt{n})$ as $n\to \infty$.
Then, for all $\lambda \in (0,1)$,
\begin{align} \label{eq:estimate_RW_double_barrier}
	\Pp{\max_{k \leq \lfloor \lambda n \rfloor} \widehat{S}_k \leq K,\ 
		\max_{\lfloor\lambda n\rfloor \leq i \leq n} \widehat{S}_i \leq L,\
		\widehat{S}_n = L - a} 
	= \sqrt{\frac{2}{\pi}} 
	\frac{\mathrm R(K) \mathrm R^-(a)}{n^{3/2} \rho \rho^-}
	(1 + \petito{1}),
\end{align}
as $n \to \infty$, uniformly in $K \in [0,\gamma_n]$, $L \in [-\gamma_n, \gamma_n]$ and $a \in [0, \gamma_n] \cap (L + \Z)$.

Moreover, Lemma~2.4 of A\"idékon and Shi \cite{aidekonshi2014} shows the following upper bound: for $\lambda \in (0,1)$,
there exists $C >0$ depending on $\lambda$ such that for all $a \geq 0$, $K \geq 0$, $L \in\R$ and $n \geq 0$, we have
\begin{align} \label{eq:upper_bound_RW_double_barrier}
	\Pp{\max_{k \leq \lfloor \lambda n \rfloor} \widehat{S}_k \leq K,\ 
		\max_{\lfloor\lambda n\rfloor \leq i \leq n} \widehat{S}_i \leq L,\
		\widehat{S}_n = L - a} 
	\leq
	\frac{C(K+1)(a+1)}{n^{3/2}+1}.
\end{align}

\subsection{Estimates on random walk \texorpdfstring{$S$}{S}}

\begin{lemma} \label{lem:Bernoulli_rv}
Let $Y$ be a sum of independent Bernoulli random variables.
Then for any integer $b \geq 0$, 
\[
	\Ec{ \ind{Y-b \geq 1} (Y-b)}
	\leq \Pp{Y-b \geq 1} \Ec{Y+1}
\]
\end{lemma}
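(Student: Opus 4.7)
The plan is to rewrite both sides of the inequality as tail sums of the survival function $a_k \coloneqq \Pp{Y \geq k}$ of $Y$, and then to reduce the claim to a sub-multiplicativity property of $a$. By Fubini's theorem,
\[
\Ec{(Y-b)\ind{Y-b \geq 1}} = \sum_{k \geq 0}\Pp{Y \geq b+1+k}
\qquad \text{and} \qquad
\Ec{Y+1} = \sum_{k \geq 0}\Pp{Y \geq k},
\]
so the claim follows term-by-term from the ``new-better-than-used'' (NBU) inequality
\[
\Pp{Y \geq b+1+k} \leq \Pp{Y \geq b+1}\,\Pp{Y \geq k} \qquad \text{for all } b, k \geq 0.
\]

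To prove this, I would exploit log-concavity of the distribution of $Y$. Since the probability generating function $\Ec{z^Y} = \prod_i(1 - p_i + p_i z)$ is a polynomial with only real roots, Newton's inequalities for elementary symmetric polynomials imply that the PMF $p_k \coloneqq \Pp{Y=k}$ is log-concave, i.e.\ $p_k^2 \geq p_{k-1}p_{k+1}$. I would then check that this in turn forces the survival function $a$ to be log-concave, equivalently that the ratios $a_{k+1}/a_k$ are non-increasing in $k$. A short direct argument works: starting from the identity $a_{k-1}a_{k+1} - a_k^2 = a_k(p_{k-1} - p_k) - p_{k-1}p_k$, the case $p_k \geq p_{k-1}$ is trivial, and in the opposite case log-concavity of $p$ forces $p_{k+j} \leq p_k \lambda^j$ for $\lambda \coloneqq p_k/p_{k-1} < 1$, hence $a_k \leq p_k/(1-\lambda)$, which rearranges to $a_{k-1}a_{k+1} \leq a_k^2$.

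Once $a$ is log-concave and $a_0 = 1$, the NBU bound follows by a telescoping product,
\[
\frac{a_{b+1+k}}{a_k} = \prod_{j=1}^{b+1}\frac{a_{k+j}}{a_{k+j-1}} \leq \prod_{j=1}^{b+1}\frac{a_j}{a_{j-1}} = a_{b+1},
\]
and summing over $k \geq 0$ completes the proof. The one technical step that deserves care is the passage from log-concavity of the PMF to log-concavity of the survival function; everything else is essentially bookkeeping. A more pedestrian alternative would be to induct on the number of Bernoulli summands in $Y$, but I expect that approach to be longer without offering real conceptual gain.
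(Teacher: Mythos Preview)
Your proof is correct and takes a genuinely different route from the paper. The paper argues directly via a stopping time: writing $Y = \sum_{i=1}^n B_i$ and $T \coloneqq \inf\{k : \sum_{i \leq k} B_i = b+1\}$, one has
\[
\Ec{\ind{Y-b\geq 1}(Y-b)} = \sum_{k=1}^n \Pp{T=k}\,\Ec{1 + \sum_{i>k} B_i} \leq \Pp{T \leq n}\,\Ec{1+Y}
\]
by independence of $\{T=k\}$ and $(B_i)_{i>k}$, which is the claim in three lines. Your approach instead establishes the stronger pointwise NBU inequality $\Pp{Y \geq b+1+k} \leq \Pp{Y \geq b+1}\Pp{Y \geq k}$ via log-concavity of the PMF (from the real-rootedness of the generating function) and then of the survival function. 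The paper's argument is shorter and uses only the independence structure of the summands; yours requires more machinery but yields a sharper termwise bound and extends verbatim to any integer-valued distribution with log-concave PMF, not just Poisson binomials. One small caveat: your telescoping product and the definition of $\lambda$ tacitly assume the relevant $a_k$ and $p_k$ are positive; the degenerate cases where some of these vanish are trivial (then $a_{b+1+k}=0$) but worth a word.
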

\begin{proof} By assumption, $Y$ is of the form $\sum_{i=1}^n B_i$, where the $B_i$'s are independent Bernoulli r.v. Then let $T \coloneqq \inf \{ k \geq 0 : \sum_{i=1}^k B_i = b+1 \}$, where $\inf \emptyset = \infty$.
\begin{align*}
	\Ec{ \ind{Y-b \geq 1} (Y-b)}
	& = \Ec{ \ind{T \leq n} (Y-b)}
	= \sum_{k=1}^n \Ec{ \ind{T = k} \left( 1 + \sum_{i=k+1}^n B_i \right)} \\
	& = \sum_{k=1}^n \Pp{T = k} \Ec{1 + \sum_{i=k+1}^n B_i}
	\leq \Pp{T \leq n} \Ec{1 + Y},
\end{align*}
and it proves the result.
\end{proof}
\begin{lemma} \label{lem:expectation_beginning_RW} 
For any $\varepsilon > 0$, there exists $K_0 > 0$ that does not depend on $(\mathbf r,\mathbf j)$ such that, for $K \geq K_0$, for any $m \geq 0$, we have
\[
	(1-\varepsilon) \rho (K-\Delta_m)
	\leq \Ec{ \ind{\max_{j \leq m} S_j \leq K} \mathrm R(K-S_m)}
	\leq (1+\varepsilon) \rho \left( K+2\Delta_m\right).
\]
\end{lemma}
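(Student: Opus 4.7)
The plan is to linearize the renewal function and then apply an optional stopping argument to the centered martingale $M_k := S_k - \mu_k$, where $\mu_k := \E[S_k]$. Since $\widehat S$ has jumps $\mathrm{Poisson}(1) - 1$ with all moments finite, the ascending ladder height has finite second moment, and the refined renewal theorem sharpens \eqref{eq:equivalent_function_R} to $\mathrm R(x) = \rho x + O(1)$ uniformly in $x \geq 0$. In particular there is a universal constant $c_0$ with $|\mathrm R(x) - \rho x| \leq c_0$ for all $x \geq 0$, so that
\begin{equation*}
\left| \Ec{ \ind{\max_{j \leq m} S_j \leq K}\,\mathrm R(K-S_m) } - \rho\, \Ec{ \ind{\max_{j \leq m} S_j \leq K}(K-S_m) } \right| \leq c_0,
\end{equation*}
and the task reduces to estimating $L_m := \Ec{\ind{\tau>m}(K-S_m)}$, where $\tau := \inf\{k \geq 1 : S_k > K\}$.

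For this, note that $|\mu_k| \leq \Delta_m$ for all $k \leq m$ and $M_k = S_k - \mu_k$ is a mean-zero martingale (its jumps are independent and centered). Optional stopping at the bounded time $\tau \wedge m$ gives $\E[S_{\tau \wedge m}] = \E[\mu_{\tau \wedge m}]$, and splitting along $\{\tau > m\}$ and $\{\tau \leq m\}$ produces the key identity
\begin{equation*}
L_m = K - \E[\mu_{\tau \wedge m}] + \Ec{ (S_\tau - K)\ind{\tau \leq m} }.
\end{equation*}
The first correction is in absolute value at most $\Delta_m$, and the overshoot $(S_\tau - K) \ind{\tau \leq m}$ is non-negative (since $S_\tau > K$ on $\{\tau \leq m\}$), yielding immediately $L_m \geq K - \Delta_m$; after multiplying by $\rho$ and absorbing the $c_0$ error into $\varepsilon \rho (K - \Delta_m)$ (this is the point where $K_0(\varepsilon)$ must be chosen large enough), one obtains the lower bound.

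For the upper bound the overshoot must be controlled. Writing $S_\tau - K = Y_\tau - 1 - b_\tau$ with $b_k := K - S_{k-1}$ (a non-negative integer on $\{\tau > k-1\}$) and applying Lemma~\ref{lem:Bernoulli_rv} conditionally on $\mathcal F_{k-1}$ with $b = b_k + 1$ gives $\Ec{(S_k - K)\ind{\tau = k}} \leq (\alpha_k + 1) \Pp{\tau = k}$, where $\alpha_k = \E[Y_k] = 1 + \beta_k$. Summing over $k$, the main term contributes $2\,\Pp{\tau \leq m} \leq 2$; the remaining $\sum_k \beta_k \Pp{\tau = k}$ is handled by writing $\beta_k = \mu_k - \mu_{k-1}$ and performing a discrete Abel summation that exploits $|\mu_k| \leq \Delta_m$ uniformly in $k \leq m$, bounding it by $O(\Delta_m)$. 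This gives $L_m \leq K + 2\Delta_m + O(1)$, from which the upper bound follows after Step~1.

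The main obstacle is the overshoot estimate: because the Bernoulli parameters $r_j$ lie only in $[0,1]$ and the block sizes $j_k - j_{k-1}$ are unrestricted, each $\alpha_k$ can be arbitrarily large, and the trivial bound $\E[S_\tau - K \mid \tau = k] \leq \alpha_k$ blows up. Lemma~\ref{lem:Bernoulli_rv} is precisely the tool that replaces the conditional expected overshoot by a probability-weighted average of $\alpha_k + 1$, and the Abel rearrangement in terms of the drifts $\mu_k$ is what ultimately produces both the explicit coefficient $2$ in front of $\Delta_m$ and the universality of $K_0(\varepsilon)$ in $(\mathbf r, \mathbf j)$.
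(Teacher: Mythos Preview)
Your proof follows the same route as the paper's: linearize $\mathrm R$, reduce to $L_m=\E[\mathbf 1_{\tau>m}(K-S_m)]$, and then combine optional stopping for $S_k-\mu_k$ with the overshoot bound coming from Lemma~\ref{lem:Bernoulli_rv}. The only substantive difference is in the linearization step: you invoke the refined renewal estimate $\mathrm R(x)=\rho x+O(1)$ (legitimate here since the Poisson ladder height has finite variance), whereas the paper uses only $\mathrm R(x)/x\to\rho$ together with a split on $\{K-S_m\ge\sqrt K\}$ to deal with small values of $K-S_m$; the paper's version is more self-contained but yours is slightly cleaner.

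One small caution on the overshoot step: the claim $\sum_k\beta_k\,\Pp{\tau=k}=O(\Delta_m)$ is correct, but a \emph{naive} Abel summation against $p_k=\Pp{\tau=k}$ produces a factor $\sum_k\lvert p_k-p_{k+1}\rvert$, which is not obviously bounded. The clean way (and presumably what you intend) is to read the sum probabilistically as $\E[(\mu_\tau-\mu_{\tau-1})\mathbf 1_{\tau\le m}]$ and bound it by $2\Delta_m$ directly from $\lvert\mu_j\rvert\le\Delta_m$ for all $j\le m$; this is exactly the information you cite, and with it the constant in front of $\Delta_m$ comes out right.
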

\begin{proof}
Let $\varepsilon > 0$ be fixed.
For $K$ large enough, by \eqref{eq:equivalent_function_R}, we have for any $x \geq \sqrt{K}$,
\[
	(1-\varepsilon) \rho x 
	\leq \mathrm R(x)
	\leq (1+\varepsilon) \rho x.
\]
Then, distinguishing between the case $S_m > K-\sqrt{K}$ and $S_m \leq K-\sqrt{K}$, we get
\begin{align*}
	\Ec{ \ind{\max_{j \leq m} S_j \leq K} \mathrm R(K-S_m)}
	& \leq \mathrm R(\sqrt{K})
	+ (1+\varepsilon) \rho \Ec{ \ind{\max_{j \leq m} S_j \leq K} (K-S_m) \ind{S_m \leq K-\sqrt{K}}} \\
	& \leq \varepsilon K 
	+ (1+\varepsilon) \rho \Ec{ \ind{\max_{j \leq m} S_j \leq K}
	 (K-S_m)},
\end{align*}
for $K$ large enough using \eqref{eq:equivalent_function_R} again.
Proceeding similarly, we have
\begin{align*}
	\Ec{ \ind{\max_{j \leq m} S_j \leq K} \mathrm R(K-S_m)}
	&\geq \Ec{ \ind{\max_{j \leq m} S_j \leq K} \mathrm R(K-S_m) \ind{S_m\leq K-\sqrt{K}}}\\
	&\geq (1-\epsilon) \rho \Ec{\ind{\max_{j \leq m} S_j \leq K} (K-S_m)\ind{S_m\leq K-\sqrt{K}}}\\
	& \geq -\varepsilon K 
	+ (1-\varepsilon) \rho \Ec{ \ind{\max_{j \leq m} S_j \leq K} (K-S_m)}.
\end{align*}
Hence, it is now sufficient to prove the following bounds
\begin{align} \label{eq:encadrement_but}
	K-\Delta_m \leq \Ec{ \ind{\max_{j \leq m} S_j \leq K} (K-S_m)}
	\leq K + 2 + 2\Delta_m.
\end{align}
For this, we write
\begin{align*}
	\Ec{ \ind{\max_{j \leq m} S_j \leq K} (K-S_m)} 
	&= - \E_{-K} \left[ \ind{\max_{j \leq m} S_j \leq 0} S_m \right]
	= - \E_{-K} \left[ S_{\tau \wedge m} \right] 
	+ \E_{-K} \left[ \ind{\tau \leq m} S_{\tau} \right],
\end{align*}
where $\tau \coloneqq \inf \{ k \geq 0 : S_k > 0 \}$.
Recall that, for any $k \leq m$, $\abs{\E[S_k]} \leq \Delta_m$.
Hence, applying the optimal stopping theorem to the martingale $(S_k - \E[S_k])$ under $\P_{-K}$, we get that $-K-\Delta_m \leq \E_{-K} [S_{\tau \wedge m}] \leq -K+\Delta_m$.
Thus, \eqref{eq:encadrement_but} follows from the bounds
\begin{align} \label{eq:encadrement_but_2}
	0 \leq \E_{-K} \left[ \ind{\tau \leq m} S_\tau \right] \leq 2 + \Delta_m.
\end{align}
The lower bound in \eqref{eq:encadrement_but_2} holds because $S_\tau \geq 0$.
For the upper bound, we distinguish according to the value of $\tau$:
\begin{align*}
	\E_{-K} \left[ \ind{\tau \leq m} S_\tau \right]
	& = \sum_{k=1}^m \E_{-K} \left[ \ind{S_1,\dots,S_{k-1} \leq 0} \ind{S_k \geq 1} S_k \right] \\
	& = \sum_{k=1}^m \E_{-K} \left[ \ind{S_1,\dots,S_{k-1} \leq 0} 
	\Ecsq{ \ind{Y_k-1+S_{k-1} \geq 1} (Y_k-1+S_{k-1}) }{S_{k-1}}
	\right] \\
	& \leq \sum_{k=1}^m \E_{-K} \left[ \ind{S_1,\dots,S_{k-1} \leq 0} 
		\Ppsq{Y_k-1+S_{k-1} \geq 1}{S_{k-1}} \right]
		(\Ec{Y_k}+1), 
\end{align*}
applying Lemma~\ref{lem:Bernoulli_rv}. Writing $\Ec{Y_k} + 1 = 2 + \Ec{Y_k-1}$, we finally get
\begin{align*}
	\E_{-K} \left[ \ind{\tau \leq m} S_\tau \right]
	& \leq 2 \sum_{k=1}^m \P_{-K} \left( \tau=k \right)
	+ \sum_{k=1}^m \Ec{Y_k-1}
	\leq 2 + \Delta_m.
\end{align*}
This proves the upper bound in \eqref{eq:encadrement_but_2} and hence conclude the proof of the lemma.
\end{proof}
\begin{lemma} \label{lem:estimate_RW_double_barrier}
For any $\varepsilon > 0$ and $\lambda \in (0,1)$, there exist $K_0 > 0$ and $n_0 \geq 1$ that do not depend on $(\mathbf r,\mathbf j)$ such that, for any $n \geq n_0$, any $K \in \llbracket K_0, n^{1/4} \rrbracket$, any $L \in \llbracket -n^{1/4}, n^{1/4} \rrbracket$, and any $a \in \llbracket 0, n^{1/4} \rrbracket$, we have
\begin{align*}
	(1-\varepsilon) \sqrt{\frac{2}{\pi}} 
	\left( K-\Delta_{\lfloor n^{1/4} \rfloor} \right)
	\frac{\mathrm R^-(a)}{\rho^- n^{3/2}}
	- \eta_n 
	& \leq \Pp{S_n = L-a,
	\max_{k < \lambda n} S_k \leq K,
	\max_{\lambda n \leq k \leq n} S_k \leq L} \\
	& \leq (1+\varepsilon) \sqrt{\frac{2}{\pi}} 
	\left( K+2\Delta_{\lfloor n^{1/4} \rfloor} \right)
	\frac{\mathrm R^-(a)}{\rho^- n^{3/2}}
	+ \eta_n.
\end{align*}
\end{lemma}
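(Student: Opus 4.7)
The plan is to decouple the ``initial'' part of the walk from the ``bulk'' by splitting at time $m \coloneqq \lfloor n^{1/4}\rfloor$, using the Markov property at time $m$, then coupling the second piece with the homogeneous walk $\widehat{S}$ of jump law $\mathrm{Poisson}(1)-1$ via Lemma~\ref{lem:comparison_S_Stilde}. The initial piece will be controlled by Lemma~\ref{lem:expectation_beginning_RW}, while the bulk is handled by the sharp homogeneous estimate \eqref{eq:estimate_RW_double_barrier}.

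Conditioning on $\mathcal{F}_m$, the target probability equals
\[
\Ec{\ind{\max_{k\leq m}S_k\leq K}\cdot \Ppsq{S'_{n-m} = L-a-S_m,\ \max_{k<\lambda n - m}S'_k \leq K-S_m,\ \max_{\lambda n - m\leq k \leq n-m}S'_k \leq L-S_m}{\mathcal{F}_m}},
\]
where $S'_k \coloneqq S_{m+k}-S_m$. By Lemma~\ref{lem:comparison_S_Stilde}, up to a coupling error of total probability at most $\eta_n$ (which is exactly how $\eta_n$ was defined, with the choice $m=\lfloor n^{1/4}\rfloor$), we may replace $(S'_k)_{0\leq k\leq n-m}$ by a homogeneous $\mathrm{Poisson}(1)-1$ walk $\widehat{S}$ independent of $\mathcal{F}_m$. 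On the event $\{\max_{k\leq m}S_k\leq K\}$, the value $S_m$ lies in $\intervalleff{-m}{K}$, so $K-S_m$, $L-S_m$, and $a$ all belong to $\intervalleff{-2n^{1/4}}{2n^{1/4}}=o(\sqrt{n-m})$, and $(\lambda n - m)/(n-m)\to\lambda\in(0,1)$. Hence \eqref{eq:estimate_RW_double_barrier} applies uniformly to $\widehat{S}$ and yields, for $n$ large,
\[
(1+\petiton)\sqrt{\frac{2}{\pi}}\frac{\mathrm R(K-S_m)\mathrm R^-(a)}{(n-m)^{3/2}\rho\rho^-}.
\]
Since $(n-m)^{3/2}=n^{3/2}(1+\petiton)$, taking the outer expectation leaves the factor $\Ec{\ind{\max_{k\leq m}S_k\leq K}\mathrm R(K-S_m)}$, which by Lemma~\ref{lem:expectation_beginning_RW} lies in $\intervalleff{(1-\varepsilon)\rho(K-\Delta_m)}{(1+\varepsilon)\rho(K+2\Delta_m)}$ for $K$ sufficiently large. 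The factor $\rho$ then cancels with the one in the denominator and, after absorbing the multiplicative errors into a single $\varepsilon$, this produces the two-sided bound of the lemma, the additive $\eta_n$ coming directly from the coupling step.

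The main obstacle is verifying that the $\petiton$ error in \eqref{eq:estimate_RW_double_barrier} is truly uniform over all values of $(K-S_m, L-S_m, a)$ that can arise on the conditioning event, especially when $S_m$ is close to $-m$: this reduces to the uniformity over arguments in $\intervalleff{-\gamma_n}{\gamma_n}$ with $\gamma_n=o(\sqrt{n})$ stated in \cite[Proposition~2.8]{pain2018}. A secondary technicality is to ensure that the exceptional contributions (coupling failure of probability $\leq \eta_n$, and the replacement of $n-m$ by $n$) do not inflate the main $(K\pm 2\Delta_m)$ factor, and that the three $\varepsilon$'s coming from \eqref{eq:estimate_RW_double_barrier}, Lemma~\ref{lem:expectation_beginning_RW}, and the $(n-m)/n$ correction combine into the single $\varepsilon$ of the statement; this is purely bookkeeping once $K_0$ and $n_0$ are chosen appropriately.
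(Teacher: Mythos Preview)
Your proposal is correct and follows essentially the same approach as the paper: split at $m=\lfloor n^{1/4}\rfloor$, apply the Markov property, replace the post-$m$ walk by $\widehat{S}$ via Lemma~\ref{lem:comparison_S_Stilde} at cost $\eta_n$, apply the homogeneous estimate \eqref{eq:estimate_RW_double_barrier} uniformly over $S_m\in\llbracket -m,K\rrbracket$, and finish with Lemma~\ref{lem:expectation_beginning_RW}. The technical points you flag (uniformity of the $o(1)$ in \eqref{eq:estimate_RW_double_barrier}, the $(n-m)^{3/2}$ versus $n^{3/2}$ adjustment, and combining the $\varepsilon$'s) are exactly the ones the paper handles in the same way.
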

\begin{proof}
We set $m \coloneqq \lfloor n^{1/4} \rfloor$.
We apply Markov's property at time $m$ and get
\begin{align} \label{eq:markov_at_m}
	\Pp{S_n = L-a,\
	\max_{k < \lambda n} S_k \leq K,\
	\max_{\lambda n \leq k \leq n} S_k \leq L}
	& = \Ec{\ind{\max_{j \leq m} S_j \leq K} \psi(S_m) },
\end{align}
where we set
\[
	\psi(x) \coloneqq \P \Big( S_n - S_m = L-a-x,\
		 \max_{k\in < \lambda n-m} S_{m+k} - S_m \leq K-x,
		 \max_{\lambda n-m \leq k \leq n-m} S_{m+k} - S_m \leq L-x\Big).
\]
Applying Lemma~\ref{lem:comparison_S_Stilde}, we have
\[
	\Pp{\exists k \in \llbracket 0,n-m \rrbracket : \widehat{S}_k \neq S_{m+k} - S_m}
	\leq 2 \left( \sum_{j = j_m+1}^{j_n} r_j^2 \right) + 2 \left( \sum_{k=m}^n \delta_k \right)
	= \eta_n.
\]
Hence, for any $x \geq 0$, we have $\widehat{\psi}(x) - \eta_n \leq \psi(x) \leq \widehat{\psi}(x) + \eta_n$, where we set
\begin{align*}
	\widehat{\psi}(x) 
	& \coloneqq \Pp{ \widehat{S}_{n-m} = L-a-x,\
		\max_{k < \lambda n-m} \widehat{S}_k \leq K-x,\
		\max_{\lambda n-m \leq k \leq n-m]} \widehat{S}_k \leq L-x}.
\end{align*}
Applying \eqref{eq:estimate_RW_double_barrier}, there exists $n_0 \geq 1$, such that for any $n \geq n_0$, any $a,K \in \llbracket 0, n^{1/4} \rrbracket$, any $L \in \llbracket -n^{1/4}, n^{1/4} \rrbracket$ and any $x \in \llbracket -n^{1/4}, K \rrbracket$, 
\begin{align*}
	(1-\varepsilon) \sqrt{\frac{2}{\pi}} \frac{1}{\rho \rho^-}
		\frac{\mathrm R(K-x) \mathrm R^-(a)}{n^{3/2}}
	\leq \widehat{\psi}(x) 
	\leq (1+\varepsilon) \sqrt{\frac{2}{\pi}} \frac{1}{\rho \rho^-}
		\frac{\mathrm R(K-x) \mathrm R^-(a)}{n^{3/2}}.
\end{align*}
Coming back to \eqref{eq:markov_at_m}, we can apply the above with $x = S_m$, because we are on the event $\{ S_m \leq K \}$ and the inequality $S_m \geq -m$ always holds by definition.
Hence, we get the upper bound
\begin{align*}
	\Ec{\ind{\max_{j \leq m} S_j \leq K} \psi(S_m) }
	\leq (1+\varepsilon) \sqrt{\frac{2}{\pi}} \frac{1}{\rho \rho^-}
			\frac{\mathrm R^-(a)}{n^{3/2}}
		\Ec{\ind{\max_{j \leq m} S_j \leq K} \mathrm R(K-S_m) }
		+ \eta_n
\end{align*}
and a similar lower bound holds with $-\varepsilon$ and $-\eta_n$ instead of $\varepsilon$ and $\eta_n$. Applying Lemma~\ref{lem:expectation_beginning_RW} (which determines the choice of $K_0$), it concludes the proof.
\end{proof}
\begin{lemma} \label{lem:upper_bound_RW_double_barrier}
For any $\lambda \in (0,1)$, there exists $C > 0$ that does not depend on $(\mathbf r,\mathbf j)$  such that, for any $n \geq 0$, any $K,a \geq 0$ and any $L \in \R$, we have
\[
	\Pp{S_n = L-a,
	\max_{k < \lambda n} S_k \leq K,
	\max_{\lambda n \leq k \leq n} S_k \leq L}
	\leq C \left( K+\Delta_{\lfloor n^{1/4} \rfloor} +1 \right) \frac{a+1}{n^{3/2}+1}
	+ \eta_n.
\]
The constant $C$ can be chosen uniformly for $\lambda$ in a compact subset of $(0,1)$.
\end{lemma}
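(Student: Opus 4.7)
The plan is to mimic the proof of Lemma~\ref{lem:estimate_RW_double_barrier}, replacing the two-sided renewal asymptotic by the one-sided upper bound \eqref{eq:upper_bound_RW_double_barrier}.
First I would dispose of small values of $n$: if $n \leq n_0$ for some absolute $n_0$, the trivial bound by $1$ is absorbed into $C$ by choosing $C$ large enough (since the right-hand side always dominates a constant multiple of $1/(n^{3/2}+1)$ times $(a+1)$).
I would also reduce to the case $L \in \R$ with $a \leq n^{1/2}$, say, because if $a > n^{1/2}$ the event forces $\widehat S_{n-m}$ to be very negative and one can close the argument using the coupling plus a crude large-deviation bound folded into $\eta_n$; alternatively, the inequality is immediate when the right-hand side exceeds~$1$.

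For the main case, set $m \coloneqq \lfloor n^{1/4} \rfloor$ and apply the Markov property at time $m$:
\begin{equation*}
\Pp{S_n = L-a,\ \max_{k < \lambda n} S_k \leq K,\ \max_{\lambda n \leq k \leq n} S_k \leq L}
= \Ec{\ind{\max_{j \leq m} S_j \leq K}\, \psi(S_m)},
\end{equation*}
where
\begin{equation*}
\psi(x) \coloneqq \Pp{S_n-S_m = L-a-x,\ \max_{k<\lambda n-m}(S_{m+k}-S_m)\leq K-x,\ \max_{\lambda n - m\leq k\leq n-m}(S_{m+k}-S_m)\leq L-x}.
\end{equation*}
By Lemma~\ref{lem:comparison_S_Stilde}, we can replace $(S_{m+k}-S_m)_{0\le k\le n-m}$ by a Poisson-jump homogeneous walk $\widehat S$ at a total-variation cost bounded by $\eta_n$, yielding $\psi(x)\le \widehat\psi(x)+\eta_n$ with $\widehat\psi$ defined as $\psi$ but for the walk $\widehat S$. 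The bound \eqref{eq:upper_bound_RW_double_barrier} applied with $\lambda' = (\lambda n - m)/(n-m) \to \lambda$ (which stays in a compact subset of $(0,1)$ for $n$ large, justifying a uniform constant) gives
\begin{equation*}
\widehat\psi(x) \le \frac{C(K-x+1)(a+1)}{(n-m)^{3/2}+1} \le \frac{C'(K-x+1)(a+1)}{n^{3/2}+1},
\end{equation*}
valid for $x \leq K$, which is what we need on the event $\{\max_{j\le m}S_j\le K\}$.

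Plugging this back in, the problem reduces to bounding
\begin{equation*}
\Ec{\ind{\max_{j \leq m} S_j \leq K}\,(K - S_m + 1)}.
\end{equation*}
This is exactly the quantity controlled in the proof of Lemma~\ref{lem:expectation_beginning_RW}: the inequality \eqref{eq:encadrement_but} (proved there by an optional-stopping argument using the centered martingale $S_k - \E[S_k]$ together with Lemma~\ref{lem:Bernoulli_rv} to handle the ascending ladder overshoot) yields
\begin{equation*}
\Ec{\ind{\max_{j \leq m} S_j \leq K}\,(K - S_m)}\ \le\ K + 2 + 2\Delta_m,
\end{equation*}
and adding the probability $\P(\max_{j\le m}S_j\le K)\le 1$ absorbs the extra~$+1$. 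Combining, we get the announced upper bound with an additive $\eta_n$ and a multiplicative constant uniform in $\lambda$ on compact subsets of $(0,1)$.

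The one mildly delicate point is the uniformity of the constant in \eqref{eq:upper_bound_RW_double_barrier} when the effective ratio $(\lambda n - m)/(n-m)$ is passed to the Poisson walk: since $m = \lfloor n^{1/4}\rfloor = o(n)$ this ratio converges to $\lambda$, and for $n$ bounded the inequality is again swallowed by enlarging $C$. Beyond that, the proof is a routine translation of the argument for Lemma~\ref{lem:estimate_RW_double_barrier}, with the two-sided renewal estimate replaced by the one-sided ballot-type bound.
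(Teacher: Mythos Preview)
Your proof is correct and follows essentially the same route as the paper: apply Markov at time $m=\lfloor n^{1/4}\rfloor$, couple with the Poisson walk via Lemma~\ref{lem:comparison_S_Stilde}, invoke the A\"id\'ekon--Shi upper bound \eqref{eq:upper_bound_RW_double_barrier} for $\widehat\psi$, and finish with the inequality \eqref{eq:encadrement_but}. The only point where you differ is the argument for uniformity in~$\lambda$. You argue that the effective ratio $(\lambda n - m)/(n-m)$ converges to~$\lambda$ and hence stays in a compact subset of $(0,1)$; this tacitly assumes that the constant in \eqref{eq:upper_bound_RW_double_barrier} is itself uniform over compacts, which is true but requires going back to the cited result. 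The paper instead observes that the probability in the statement is monotone in~$\lambda$ --- nondecreasing when $L\le K$, nonincreasing when $L>K$ --- so for $\lambda\in[\lambda_1,\lambda_2]$ one may bound by the value at an endpoint and only ever apply \eqref{eq:upper_bound_RW_double_barrier} at two fixed values of~$\lambda$. This is a bit cleaner. (Your separate reduction to $a\le n^{1/2}$ is also unnecessary, since \eqref{eq:upper_bound_RW_double_barrier} already holds for all $a\ge 0$.)
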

\begin{proof} This lemma is proved similarly as Lemma~\ref{lem:estimate_RW_double_barrier}, using \eqref{eq:upper_bound_RW_double_barrier} instead of \eqref{eq:estimate_RW_double_barrier} and the upper bound in \eqref{eq:encadrement_but} instead of Lemma~\ref{lem:expectation_beginning_RW}.
The fact that the constant $C$ can be chosen uniformly for $\lambda$ in a compact subset of $(0,1)$ follows from the observation that the considered probability is nondecreasing in $\lambda$ if $L \leq K$, and nonincreasing in $\lambda$ otherwise.
\end{proof}
\begin{lemma} \label{lem:exponential_RW_double_barrier}
For any $\lambda \in (0,1)$ and $z > 0$, there exists $C > 0$ that does not depend on $(\mathbf r,\mathbf j)$ such that, for any integers $n,K \geq 0$ and $L \in \Z$, we have
\begin{align*}
	\Ec{ \ind{\max_{k < \lambda n} S_k \leq K,\
	\max_{\lambda n \leq k \leq n} S_k \leq L}
	e^{z S_n} (L-S_n+1)^2 }
	& \leq C e^{z L} \left( 
	\frac{K + \Delta_{\lfloor n^{1/4} \rfloor} +1}{n^{3/2}+1}
	+ \eta_n \right).
\end{align*}
The constant $C$ can be chosen uniformly for $\lambda$ in a compact subset of $(0,1)$.
\end{lemma}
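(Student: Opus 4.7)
The plan is to reduce the expectation to a tail sum and apply the upper bound already established in Lemma~\ref{lem:upper_bound_RW_double_barrier}. Since the jumps of $S$ take values in $\{-1,0,1,2,\dots\}$, the walk is integer-valued, and on the event $\{\max_{\lambda n \leq k \leq n} S_k \leq L\}$ we automatically have $S_n \leq L$. Setting $a \coloneqq L - S_n \in \N$, the expectation $E$ appearing on the left-hand side decomposes as
\[
E \;=\; \sum_{a=0}^{\infty} e^{z(L-a)} (a+1)^2 \cdot p_n(a),
\qquad
p_n(a) \coloneqq \Pp{S_n = L-a,\ \max_{k < \lambda n} S_k \leq K,\ \max_{\lambda n \leq k \leq n} S_k \leq L}.
\]

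Next I would apply Lemma~\ref{lem:upper_bound_RW_double_barrier} to each $p_n(a)$, which gives
\[
p_n(a) \;\leq\; C \left( K + \Delta_{\lfloor n^{1/4} \rfloor} + 1 \right) \frac{a+1}{n^{3/2}+1} \;+\; \eta_n.
\]
Plugging this into the decomposition of $E$ and factoring out $e^{zL}$ yields
\[
E \;\leq\; e^{zL} \left[ \frac{C\,(K + \Delta_{\lfloor n^{1/4} \rfloor} + 1)}{n^{3/2}+1} \sum_{a=0}^{\infty} e^{-z a} (a+1)^3 \;+\; \eta_n \sum_{a=0}^{\infty} e^{-z a} (a+1)^2 \right].
\]
Since $z > 0$ is fixed, both series converge to finite constants depending only on $z$, which can be absorbed into $C$. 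The uniformity of $C$ for $\lambda$ in a compact subset of $(0,1)$ is inherited directly from the corresponding uniformity in Lemma~\ref{lem:upper_bound_RW_double_barrier}.

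There is essentially no obstacle in this argument; the only point to check is that the extra quadratic weight $(L-S_n+1)^2$ does not spoil the geometric decay, which it does not because $z > 0$ is strict. Boundary/degenerate cases (such as $n = 0$ or very small $n$, where $n^{3/2}+1 \asymp 1$) reduce to a direct verification using $(L+1)^2 \leq C_z e^{zL}$ for $L \geq 0$, and are absorbed by enlarging $C$.
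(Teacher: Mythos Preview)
Your proof is correct and follows essentially the same approach as the paper: decompose according to the value $a = L - S_n$, apply Lemma~\ref{lem:upper_bound_RW_double_barrier} termwise, and sum the resulting series in $a$ using $z>0$. The paper's proof is slightly terser but identical in substance; your remarks on boundary cases and on inheriting the uniformity in $\lambda$ are correct additional detail.
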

\begin{proof} 
We distinguish according to the value of $S_n$:
\begin{align*}
	& \Ec{ \ind{\max_{k\in[0,\lambda n)} S_k \leq K,
	\max_{k\in[\lambda n,n]} S_k \leq L}
	e^{z S_n} (L-S_n+1)^2} \\
	& = \sum_{a = 0}^\infty (a+1)^2 e^{z (L-a)} 
	\Pp{\max_{k < \lambda n} S_k \leq K,
	\max_{\lambda n \leq k \leq n} S_k \leq L,
	S_n = L-a} \\
	& \leq \sum_{a = 0}^\infty (a+1)^2 e^{z (L-a)} 
	\left( C \frac{(K+ \Delta_{\lfloor n^{1/4} \rfloor}+1)(a+1)}{n^{3/2}+1} + \eta_n \right),
\end{align*}
applying Lemma~\ref{lem:upper_bound_RW_double_barrier}.
The result follows.
\end{proof}

\section{Concerning assumptions for preferential attachment trees}
\label{section:PAT}

%

\begin{proof}[Proof of Lemma~\ref{lem:remainder sum p_i square for PAT}]
	Recall the formulas for the $q$-th moment of a Beta distribution: if $\beta\sim \mathrm{Beta}(a,b)$ then
	\begin{align*}
	\Ec{\beta^q}=\prod_{k=0}^{q-1}\frac{a+k}{a+b+k}.
	\end{align*}
	Recall also that if $\beta\sim\mathrm{Beta}(a,b)$, then $(1-\beta)\sim \mathrm{Beta}(b,a)$. 
	
	For this proof, we write $Z_i \coloneqq (\frac{\mathsf{w}^\mathbf{a}_i}{\mathsf{W}^\mathbf{a}_i})^2$ for every $i\geq 2$.
	Using the definition of the sequence $(\mathsf{w}^\mathbf{a}_n)_{n\geq 1}$, we can write for any $i\geq 1$,
	\begin{align*}
	Z_i=\left(\frac{\mathsf{w}^\mathbf{a}_{i}}{\mathsf{W}^\mathbf{a}_{i}}\right)^2= \left(\frac{\mathsf{W}^\mathbf{a}_{i}-\mathsf{W}^\mathbf{a}_{i-1}}{\mathsf{W}^\mathbf{a}_{i}}\right)^2=(1-\beta_{i-1})^2, 
	\end{align*}
	so that the sequence $\left(Z_i\right)_{i\geq 2}$ is a sequence of independent random variables. 
	Note that since $(1-\beta_{i-1})\sim \mathrm{Beta}(a_{i},A_{i-1}+i-1)$, we have  
	\begin{align*}
	\Ec{Z_i}
	=\frac{a_{i}\cdot (a_{i}+1)}{(A_{i}+i-1)(A_{i}+i)} 
	\leq C\cdot \frac{a_i (a_{i}+1)}{i^2}.
	\end{align*}
	This entails using Assumption \eqref{eq:assumption_2 fitness sequence} that
	\begin{align*}
	\sum_{i=n}^{\infty} \Ec{Z_i}=\grandO{n^{-1}}.
	\end{align*}
	Then, for any $n\geq 0$, let $M_n \coloneqq \sum_{i=2}^n (Z_i-\Ec{Z_i})$, which is a martingale in its own filtration. We now prove that this martingale almost surely converges to a limit $M_\infty=\sum_{i=2}^\infty (Z_i-\Ec{Z_i})$ and that we almost surely have $\abs{M_n-M_\infty}=\grandO{n^{-1}}$. 
	Together with the above, this implies that almost surely
	\begin{align*}
	\sum_{i=n}^{\infty} Z_i = \sum_{i=n}^{\infty} \Ec{Z_i}+(M_\infty-M_{n-1})= \grandO{n^{-1}},
	\end{align*}
	which is what we want to prove. 
	For this, we use \cite[Lemma~A.3]{senizergues2021} with $q=2$ and $\alpha=-1$, for which we just need to verify that 
	\begin{align*}
	\Ec{(M_{2n}-M_n)^2}\leq \grandO{n^{-2 - \delta}},
	\end{align*}
	for some $\delta>0$. 
	We have
	\begin{align*}
	\Ec{(M_{2n}-M_n)^2}=\sum_{i=n+1}^{2n}\Var(Z_i)
	\end{align*} 
	and
	\begin{align*}
	\Var(Z_i)&=\Ec{(1-\beta_{i-1})^4}-\Ec{(1-\beta_{i-1})^2}^2\\
	&=\frac{2 a_i (a_i + 1) (A_{i-1}+i-1) (2 a_i (A_i+i + 2) + 3 (A_i +i))}{(A_i+i-1)^2 (A_i+i)^2 (A_i+i+1) (A_i +i+ 2)}\\
	&\leq C\frac{(a_i+2)^3}{i^4}.
	\end{align*}
	Hence, 
	\begin{align*}
	\Ec{(M_{2n}-M_n)^2}\leq \sum_{i=n+1}^{2n}C\frac{(a_i+2)^3}{i^4} &\leq \frac{C}{n^4} \cdot \left(2+\max_{n+1\leq i \leq 2n}a_i\right)\cdot \sum_{i=n+1}^{2n} (a_i+2)^2\\
	&= C \cdot n^{-4}\cdot n^{1-\delta} \cdot C\cdot n=\grandO{n^{-2-\delta}},
	\end{align*}
	using \eqref{eq:assumption_1 fitness sequence} and \eqref{eq:assumption_2 fitness sequence}.
	This concludes the proof.
\end{proof}
\end{appendix}

\section*{Acknowledgements}
The authors would like to thank the anonymous referees for their careful reading, which helped improving the paper.

\addcontentsline{toc}{section}{References}

\end{document}